\def\id{\mathop{\rm Id}\nolimits}
\def\0D{\Delta^{(0)}}
\def\1D{\Delta^{(1)}}
\newcommand{\longto}{\longrightarrow}
\newtheorem{theorem}{Theorem}[section]
\newtheorem{remark}[theorem]{Remark}
\newtheorem{proposition}[theorem]{Proposition}
\newtheorem{lemma}[theorem]{Lemma}
\newtheorem{corollary}[theorem]{Corollary}
\newtheorem{conjecture}[theorem]{Conjecture}
\newtheorem{definition}[theorem]{Definition}
\def\build#1_#2^#3{\mathrel{
\mathop{\kern 0pt#1}\limits_{#2}^{#3}}}
\numberwithin{equation}{section}
\def\part{\partial}
\def\text{\hbox}
\def\build#1_#2^#3{\mathrel{
\mathop{\kern 0pt#1}\limits_{#2}^{#3}}}
\numberwithin{equation}{section}
\newcommand{\comment}[1]{\relax}
\title{\vspace*{-15mm}Graph algebras}
\author[P. M.~Hajac]{Piotr M.~Hajac}
\address[P. M.~Hajac]{Instytut Matematyczny, Polska Akademia Nauk, ul. \'Sniadeckich 8, Warszawa, 00-656 Poland;
and
Department of Mathematics,
University of Colorado Boulder,
2300 Colorado Avenue,
Boulder, CO 80309-0395,
USA}
\email{pmh@impan.pl }
\author[M.~Tobolski]{Mariusz Tobolski}
\address[M.~Tobolski]{Instytut Matematyczny, Polska Akademia Nauk, ul. \'Sniadeckich 8, Warszawa, 00-656 Poland}
\email{mtobolski@impan.pl }
\begin{document}
\baselineskip15pt
\begin{abstract}\normalsize
This introduction to graphs and graph algebras provides the optimal bound for the number of all
paths of length $k$ in a graph with $N\geq k$ edges and no loops. Our proof relies on a construction
of a number of terminating algorithms that reshape such graphs without ever decreasing the number
of paths of length~$k$. The key two algorithms work in turns each of them ending with a graph to which
the other algorithm can be applied. Finally, one arrives at a specific graph realizing the optimal bound.
Herein graph algebras mean path algebras and Leavitt path algebras. For the ground field $\mathbb{C}$
of complex numbers, the latter are viewed as dense subalgebras in their universal C*-completions called
graph C*-algebras. 
\end{abstract}
\maketitle
\vspace*{-10mm}
{\tableofcontents}

\vspace*{-15mm}
\section*{Acknowledgements}
\noindent
This  work is part of the  project ``New Geometry of Quantum Dynamics''
supported by EU grant H2020-MSCA-RISE-2015-691246 and co-financed by 
Polish Government grants 328941/PnH/2016 and  W2/H2020/2016/317281.
It is based on the advanced part of PMH's lecture course delivered at the University of Colorado Boulder
in the Spring Semester 2019. PMH is very grateful to his students whose curiosity-driven
attitude allow him to teach a lecture course based in ca.\ 20\% on ongoing research. Special
thanks go to Carla Farsi for hosting PMH as an Ulam Professor, and to research collaborators Alexandru
Chirvasitu (Theorem~1.10) and Sarah Reznikoff (Theorem~2.9).

\section{Graphs (quivers)}

\begin{definition}
A \underline{graph} is a quadruple $E:=(E^0,E^1,s,t)$, where:
\begin{itemize}
\item $E^0$ is the set of \underline{vertices},
\item $E^1$ is the set of \underline{edges} (arrows),
\item $E^1\overset{s}{\to}E^0$ is the \underline{source} map assigning to each edge its beginning,
\item $E^1\overset{t}{\to}E^0$ is the \underline{target} (range) map assigning to each edge its end.
\end{itemize}
\end{definition}
\noindent
For instance, consider the following graph
\begin{center}
\begin{tikzpicture}[auto,swap]
\tikzstyle{vertex}=[circle,fill=black,minimum size=3pt,inner sep=0pt]
\tikzstyle{edge}=[draw,->]
\tikzset{every loop/.style={min distance=20mm,in=130,out=50,looseness=50}}
    \node[vertex,label=below:$v_1$] (1) at (0,0) {};
    \node[vertex,label=below:$v_2$] (2) at (-1,-1) {};
    \node[vertex,label=below:$v_3$] (3) at (1,-1) {};
    \path (1) edge [edge, loop above] node {$g$} (1);
    \path (1) edge [edge] node {$e$} (2);
    \path (1) edge [edge] node[near end,above] {$f$} (3);
\end{tikzpicture}\,.
\end{center}
Here
\begin{align*}
s(e)=v_1,\qquad t(e)=v_2,\\
s(f)=v_1,\qquad t(f)=v_3,\\
s(g)=v_1,\qquad t(g)=v_1.
\end{align*}

\noindent
\underline{Elementary remarks}:
\begin{enumerate}
\item
The maps $s$ and $t$ need not be injective nor surjective.
\item
 If both $E^0$ and $E^1$ are empty, we call $E$ the \underline{empty} graph. The set $E^1$ might 
always be empty, but $E^0$ must not be empty if $E^1$ is not empty: every edge must have its
beginning and its end.
\item
 $E^0$ and $E^1$ might be infinite (usually, at most countable).
\end{enumerate}

\subsection{Paths}

\begin{definition}
Let $E$ be a graph. A \underline{finite path} in $E$ is a finite tuple $p_n:=(e_1,\ldots,e_n)$ of edges
satisfying 
\[
t(e_1)=s(e_2),\quad t(e_2)=s(e_3),\quad \ldots,\quad t(e_{n-1})=s(e_n).
\] 
The beginning $s(p_n)$ of $p_n$
is $s(e_1)$ and the end $t(p_n)$ of $p_n$ is $t(e_n)$. If $s(p_n)=t(p_n)$, we call $p_n$ 
a~\underline{loop}. An \underline{infinite path} is a sequence $(e_i)_{i\in\mathbb{N}}$ of edges 
satisfying 
\[
\forall\,i\in\mathbb{N}:\quad t(e_i)=s(e_{i+1}).
\]
\end{definition}
\begin{definition}
The \underline{length} of a path is the size of the tuple. Every edge is a path of length~$1$.
Vertices are considered as finite paths of length~$0$. The length of an infinite path is infinity.
\end{definition}

\noindent
\underline{Elementary remarks}:
\begin{enumerate}
\item 
The space $\text{FP}(E)$ of all finite paths in $E$ (vertices included) might be infinite even if $E$
is a finite graph (both $E^0$ and $E^1$ are finite):
\begin{center}
\begin{tikzpicture}[auto,swap]
\tikzstyle{vertex}=[circle,fill=black,minimum size=3pt,inner sep=0pt]
\tikzstyle{edge}=[draw,->]
\tikzset{every loop/.style={min distance=20mm,in=130,out=50,looseness=50}}
    \node[vertex,label=below:$v$] (1) at (0,0) {};
    \path (1) edge [edge, loop above] node {$e$} (1);
\end{tikzpicture}
\end{center}
\[
E^0=\{v\},\quad E^1=\{e\},\quad \text{FP}(E)=\{v, e, (e,e), (e,e,e),\ldots\}.
\]
\item 
Examples of infinite paths:
\begin{center}
\begin{tikzpicture}[auto,swap]
\tikzstyle{vertex}=[circle,fill=black,minimum size=3pt,inner sep=0pt]
\tikzstyle{edge}=[draw,->]
\tikzset{every loop/.style={min distance=20mm,in=130,out=50,looseness=50}}
    \node[vertex] (1) at (0,0) {};
    \path (1) edge [edge, loop above] node {} (1);
\end{tikzpicture}
winding around infinitely many times,
\end{center}
\begin{center}
\begin{tikzpicture}[auto,swap]
\tikzstyle{vertex}=[circle,fill=black,minimum size=3pt,inner sep=0pt]
\tikzstyle{edge}=[draw,->]
\tikzset{every loop/.style={min distance=20mm,in=130,out=50,looseness=50}}
    \node[vertex] (1) at (-1,0) {};
   \node[vertex] (2) at (0,0) {};
    \node[vertex] (3) at (1,0) {};
    \node[vertex] (4) at (2,0) {};
    \path (1) edge [edge] node {} (2);
    \path (2) edge [edge] node {} (3);
    \path (3) edge [edge] node {} (4);
\end{tikzpicture}
$^{......}\quad$ marching off to infinity,
\end{center}
or a combination of the above cases.
\end{enumerate}
\begin{theorem}
Let $E$ be a finite graph. Then $\mathrm{FP}(E)$ is finite if and only if there are no loops in $E$.
\end{theorem}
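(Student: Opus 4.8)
The plan is to prove the two implications separately, handling the ``only if'' direction by contraposition. For the implication ``$E$ has a loop $\Rightarrow$ $\mathrm{FP}(E)$ is infinite'', I would fix a loop $c=(e_1,\ldots,e_n)$, so that $t(e_n)=s(e_1)$, and consider for each $k\ge 1$ the $k$-fold concatenation $c^k$, i.e. the tuple consisting of $k$ consecutive copies of $(e_1,\ldots,e_n)$. This is a legitimate finite path: the compatibility conditions inside each copy hold by hypothesis, and the condition at each junction between consecutive copies is precisely $t(e_n)=s(e_1)$. Since $c^k$ has length $nk$ with $n\ge 1$, these paths have pairwise distinct lengths, so $\mathrm{FP}(E)$ is infinite. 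Contrapositively, if $\mathrm{FP}(E)$ is finite then $E$ has no loops. Notice this half uses neither finiteness of $E^0$ nor of $E^1$.

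For the converse, assume $E$ is finite and loop-free; the goal is to bound the length of an arbitrary finite path and then count. The heart of the argument is the claim that in a loop-free graph every finite path visits pairwise distinct vertices. Given $p=(e_1,\ldots,e_n)$, I would record its visited vertices $v_0:=s(e_1)$ and $v_i:=t(e_i)$ for $1\le i\le n$, so that the path condition reads $v_{i-1}=s(e_i)$. If two coincided, say $v_i=v_j$ with $i<j$, then the sub-tuple $(e_{i+1},\ldots,e_j)$ would satisfy all compatibility conditions and have $s=v_i=v_j=t$, hence be a loop, contradicting the hypothesis. Thus $v_0,\ldots,v_n$ are $n+1$ distinct elements of $E^0$, forcing $n\le |E^0|-1$.

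It then remains to assemble the count: every finite path of length $k$ is in particular a $k$-tuple of edges, so there are at most $|E^1|^k$ of them, and together with the $|E^0|$ vertices (the length-$0$ paths) the length bound gives
\[
\#\,\mathrm{FP}(E)\;\le\;|E^0|+\sum_{k=1}^{|E^0|-1}|E^1|^k\;<\;\infty,
\]
using finiteness of both $E^0$ and $E^1$. I expect the only genuine subtlety to be the distinct-vertices claim, and specifically making sure that a repeated vertex yields a sub-path of length $\ge 1$, so that it is really a loop in the sense of the definition (a nonempty tuple of edges) rather than a trivial length-$0$ path; this is automatic since $i<j$ leaves $j-i\ge 1$ edges in the sub-tuple.
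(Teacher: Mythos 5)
Your proof is correct, and it differs from the paper's in its key lemma. Both arguments share the same skeleton (a loop yields paths of unbounded length; absence of loops bounds the length of any path, after which a crude count finishes), but where the paper shows that \emph{edges} cannot repeat in a path of a loop-free graph --- if $e_i=e_j$ with $i<j$, then $(e_i,\ldots,e_{j-1})$ is a loop, giving the length bound $N=|E^1|$ --- you show that \emph{vertices} cannot repeat: if $v_i=v_j$ with $i<j$, then $(e_{i+1},\ldots,e_j)$ is a loop, giving the bound $|E^0|-1$. Your vertex-based claim is the more standard graph-theoretic argument and you are right that the only subtle point is that $j-i\ge 1$ guarantees the offending sub-tuple is a genuine loop rather than a length-$0$ path; your counting bound $|E^1|^k$ is cruder than the paper's $k!\binom{N}{k}=N!/(N-k)!$ but is all that finiteness requires. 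What the paper's edge-based formulation buys is continuity with what follows: the non-repetition of edges is exactly what feeds the subsequent corollary that a loop-free graph with $N$ edges has at most $\binom{N}{k}$ paths of length $k$, and ultimately the optimal bound $P^N_k$, whereas your vertex bound would have to be supplemented by the edge argument to get there. Your treatment of the forward direction (concatenating whole copies of the loop $c^k$, with the junction condition being precisely $t(e_n)=s(e_1)$) is also a clean packaging of what the paper does by extending a path one edge at a time around the loop.
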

\begin{proof}
If there is a loop in $E$, then we have paths of arbitrary length, so there are infinitely many of them:
\[
e_1, \quad(e_1,e_2),\quad \ldots,\quad (e_1,\ldots, e_n),\quad (e_1,\ldots, e_n, e_1), 
\quad \text{etc.}
\]
Vice versa, if there are no loops, then edges in any path $(e_1,e_2,\ldots,e_n)$ cannot repeat
themselves: 
\[
e_i=e_j \Rightarrow i=j.
\]
Indeed, suppose the contrary: $e_i=e_j$ for $i<j$. Then 
\[
s(e_i)=s(e_j)=t(e_{j-1}),
\]
so the path $p_{ij}:=(e_i,\ldots,e_{j-1})$ is a loop:
\[
s(p_{ij})=s(e_{i})=t(e_{j-1})=t(p_{ij}),
\]
which contradicts our assumption of not having loops.

Therefore, the length of the longest possible path in $E$ is at most the number $N$ of all edges.
This yields the finite decomposition
\[
\text{FP}(E)=\text{FP}_0(E)\cup\text{FP}_1(E)\cup\ldots\cup \text{FP}_N(E),
\]
where $\text{FP}_k(E)$ is the space of all paths in $E$ of length $k$.
Furthermore, the sets $\text{FP}_0(E)=E^0$ and $\text{FP}_1(E)=E^1$
are finite by assumption. To construct a path of length $k$, first we must choose $k$ different edges
from the set of $N$ edges. We can do it in $\binom{N}{k}$ many ways. Then we can order these
$k$ edges into a path in at most $k!$ different ways, so there are at most
\[
k!\binom{N}{k}=\frac{N!}{(N-k)!}
\]
many paths of length $k$. 

Summarizing, $\text{FP}(E)$ is a finite union of finite sets, so it is finite.
\end{proof}

The estimate of the number of paths of length $k$ used in the above proof is far from optimal. Our goal now is to find the optimal estimate,
i.e.\ the estimate for which there exists a graph having exactly as many paths as allowed by the estimate.

\begin{definition}
Let $E$ be a graph, and let $p_n:=(e_1,\ldots, e_n)$ be a finite path of length at least one.
A \underline{subpath} $q_k$ of $p_n$ is a path $(e_i,e_{i+1},\ldots, e_{i+k})$,
where $i\in\{1,\ldots,n\}$ and $k\in\{0,\ldots,n-i\}$.
If $(e_n)_{n\in\mathbb{N}}$ is an infinite path, then any $(k+1)$-tuple $(e_i,\ldots,e_{i+k})$,
for any $i\in\mathbb{N}$, $k\in\mathbb{N}\cup\{\infty\}$, 
is a~subpath of $(e_n)_{n\in\mathbb{N}}$\,. Every source and every
target of each edge of a path (finite or infinite) is viewed as a subpath of length zero.
\end{definition}

\begin{theorem}
Let $E$ be any graph. If there exists a path $p$ (finite or infinite) whose edges can be rearranged
(permuted) into a path, then there exists a loop in $E$.
\end{theorem}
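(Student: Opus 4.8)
The plan is to prove the contrapositive: assuming $E$ has no loops, I will show that the only rearrangement of the edges of a path into a path is the trivial one, so that the mere existence of a genuine (nonidentity) rearrangement into a path forces a loop. Write the given path as $p=(e_1,e_2,\ldots)$ (finite or infinite), and set $w_0:=s(e_1)$ and $w_i:=t(e_i)$, so that each $e_i$ runs $w_{i-1}\to w_i$. The first task is to extract two distinctness facts from the no-loop hypothesis. The edges $e_i$ are pairwise distinct, since a coincidence $e_i=e_j$ with $i<j$ produces the loop $(e_i,\ldots,e_{j-1})$ exactly as in the proof of Theorem~1.3. Likewise the vertices $w_i$ are pairwise distinct, since $w_i=w_j$ with $i<j$ would make the subpath $(e_{i+1},\ldots,e_j)$ a loop. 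Hence the edges of $p$ form a genuine set $S$, and $S$ is precisely the edge set of a simple directed line $w_0\to w_1\to w_2\to\cdots$.

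The decisive structural point is then that each vertex is the source of \emph{at most one} edge of $S$: the vertex $w_{i-1}$ is the source of $e_i$ and of no other edge, because the sources $w_0,w_1,\ldots$ are all distinct. Symmetrically, each vertex is the target of at most one edge of $S$, and $w_0$ is a source that is never a target (the targets are exactly $w_1,w_2,\ldots$). Consequently any path assembled from the edges of $S$ is forced: once its starting vertex is fixed, there is never a choice of which edge to append next. Moreover, the first edge of any rearranged path $p'$ must be an edge whose source is not the target of any edge of $S$ — otherwise that other edge would be obliged to precede it — and the only such edge is $e_1$, whose source $w_0$ is never a target. Therefore $p'$ begins with $e_1$ and is thereafter forced step by step, yielding $p'=(e_1,e_2,\ldots)=p$. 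Thus the permutation is the identity, which proves the contrapositive.

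The step I expect to require the most care is this last one, pinning down the start of $p'$ and concluding uniqueness, together with two interpretive matters. First, one must read ``rearranged into a path'' as a \emph{nonidentity} permutation, since otherwise the statement is vacuously false; the argument above is exactly tailored to show the identity is the only admissible permutation. Second, the infinite case is the genuinely delicate one: there is no terminal edge to anchor the reconstruction, so the uniqueness cannot be obtained by peeling edges off the end, and one must instead argue purely from the characterization of $e_1$ as the unique edge whose source is not a target, combined with the fact that a rearrangement uses every edge of $S$ exactly once. Making the ``forcing'' rigorous in that setting — ruling out a rearranged path that starts at some interior vertex $w_k$ by observing that its predecessor $e_k$ could then never be placed — is where I would spend the bulk of the effort.
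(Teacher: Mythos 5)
Your proof is correct, but it takes a genuinely different route from the paper's. The paper argues directly rather than by contraposition: taking the nonidentity bijection $\sigma$ and the smallest index $j$ with $\sigma(j)\neq j$, it notes that bijectivity forces $\sigma(j)>j$ and $\sigma^{-1}(j)>j$, and then explicitly concatenates the segment $(e_{\sigma(j)},\ldots,e_j)$ of the permuted path (its positions $j$ through $\sigma^{-1}(j)$) with the segment $(e_{j+1},\ldots,e_{\sigma(j)-1})$ of the original path, exhibiting a loop outright; it never needs the edges or vertices of $p$ to be distinct, and the finite and infinite cases are handled uniformly. You instead assume $E$ has no loops and derive rigidity: $p$ is then a simple directed line (distinct edges, distinct vertices, the latter again by the no-loop hypothesis), each vertex is the source of at most one edge of $p$ and the target of at most one, so any path assembled from these edges is forced from its first edge, and that first edge must be $e_1$ because $s(e_1)$ is the unique source that is never a target; hence only the identity permutation works. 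Your route buys a stronger statement (the edge set of a loop-free path can be arranged into a path in exactly one way) and makes transparent why the theorem must be read as excluding the identity permutation, at the price of the case analysis the paper's construction avoids. The one step you rightly flag as delicate --- ``otherwise that other edge would be obliged to precede it'' --- does need the completion you sketch: if $p'$ began with $e_k$, $k>1$, then $e_{k-1}$'s only admissible successor is $e_k$, which occupies position one, so $e_{k-1}$ would have to be a terminal edge; this is impossible in the infinite case, and in the finite case it is incompatible with $p'$ containing all $n$ edges, since the forced continuation $e_k,e_{k+1},\ldots,e_n$ accounts for only $n-k+1<n$ of them. With that spelled out (and noting that the finiteness result you invoke is Theorem~1.4 in the paper's numbering), your argument is complete.
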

\begin{proof}
Let $S$ be a subset of $\mathbb{N}$ containing at least two elements, and let $\sigma: S\to S$ be a bijection that is not the idenity.
Since $\sigma\neq\id$, there exist the smallest $j\in S$ such that $\sigma(j)\neq j$. As $\sigma$ is bijective, $\sigma(j)>j$. 
Indeed, if $j$ is the smallest
element of $S$, we are done. If there is $i<j$, then $\sigma(j)\neq\sigma(i)=i$, so $\sigma(j)>j$. 
Furthermore,   $\sigma^{-1}(j)\neq j$. If $\sigma^{-1}(j)< j$, then
we get a contradiction: $j=\sigma(\sigma^{-1}(j))=\sigma^{-1}(j)< j$. Therefore, also  $\sigma^{-1}(j)> j$.
 
 Next, let $p:=(e_1,...,e_n,...)$ or $p:=(e_1,...,e_n)$. Then let $S:=\mathbb{N}$ or $S:=\{1,...,n\}$, respectively. Suppose now that 
 $p_\sigma:=(e_{\sigma(1)},...,e_{\sigma(n)},...)$ or $p_\sigma:=(e_{\sigma(1)},...,e_{\sigma(n)})$ is again a path for a bijection $\sigma$ as above.
 Then $(e_j,\ldots,e_{\sigma^{-1}(j)})$ is a subpath of~$p$, so $(e_{\sigma(j)},...,e_j)$ is a subpath of $p_\sigma$. Combining the latter path
 with the path $(e_j,e_{j+1},\ldots,e_{\sigma(j)-1},e_{\sigma(j)})$, we obtain a loop: 
\[
(e_{\sigma(j)},...,e_j,e_{j+1},\ldots,e_{\sigma(j)-1}).
\]
 Note that, if $\sigma(j)=j+1$, then the path $(e_{\sigma(j)},...,e_j)=(e_{j+1},...,e_j)$ is already a loop.
\end{proof}
\begin{corollary}
If $E$ is a graph with $N$ edges and no loops, then there are at most $\binom{N}{k}$
different paths of length $1\leq k\leq N$.
\end{corollary}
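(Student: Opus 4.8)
The plan is to sharpen the crude bound $k!\binom{N}{k}$ obtained in the proof of the finiteness theorem to the optimal $\binom{N}{k}$ by showing that the offending factor $k!$ actually collapses to $1$: in a graph without loops, any $k$-element set of edges can be assembled into a path of length $k$ in \emph{at most one} way. Once this is established, counting the paths of length $k$ reduces to counting $k$-element subsets of the $N$-element edge set $E^1$.

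First I would record a fact already extracted in the proof of the finiteness theorem above, namely that in a loop-free graph the edges occurring in any finite path are pairwise distinct. This lets me define, for each $1\le k\le N$, the map sending a path $(e_1,\ldots,e_k)\in\mathrm{FP}_k(E)$ to the subset $\{e_1,\ldots,e_k\}\subseteq E^1$ of its edges; distinctness guarantees that the image really has $k$ elements.

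The crux is to prove that this map is injective. Suppose two paths $(e_1,\ldots,e_k)$ and $(f_1,\ldots,f_k)$ of length $k$ carry the same set of edges. Because the edges within each path are distinct, there is a unique permutation $\sigma$ of $\{1,\ldots,k\}$ with $f_i=e_{\sigma(i)}$ for all $i$, so the second path is exactly the first one with its edges rearranged. If the two paths were different we would have $\sigma\neq\id$, and the previous theorem---asserting that a path whose edges can be permuted into a path forces a loop---would produce a loop in $E$, contradicting our hypothesis. Hence the two paths coincide and the map is injective.

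With injectivity in hand the count is immediate: $\mathrm{FP}_k(E)$ embeds into the family of $k$-element subsets of an $N$-element set, of which there are exactly $\binom{N}{k}$, giving the desired bound for every $1\le k\le N$. I expect the single genuine obstacle to be the injectivity step, and within it the only delicate point is the upgrade from ``same set of edges'' to ``permutation of the same tuple'', which is precisely what the distinctness of edges in a loop-free graph supplies; the appeal to the previous theorem then does the rest.
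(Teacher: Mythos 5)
Your proposal is correct and follows essentially the same route as the paper: both use the fact that loop-freeness forces the edges of a path to be distinct, and then invoke the preceding theorem (a nontrivial permutation of a path's edges into a path would create a loop) to conclude that each $k$-element edge set underlies at most one path, yielding the bound $\binom{N}{k}$. Your write-up merely formalizes the paper's two-sentence argument as an injective map into $k$-element subsets of $E^1$.
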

\begin{proof}
No loops in $E$ implies that edges cannot repeat themselves in any path, so one needs to choose
$k$ different edges from $N$ edges. By the above proposition, there is at most one way these $k$ different edges can form a path of length $k$.
\end{proof}

In any graph with $N\geq 1$ edges, there are exactly $\binom{N}{1}=N$ paths of length one, i.e. edges.
There is a graph
\begin{center}
\begin{tikzpicture}[auto,swap]
\tikzstyle{vertex}=[circle,fill=black,minimum size=3pt,inner sep=0pt]
\tikzstyle{edge}=[draw,->]
\tikzset{every loop/.style={min distance=20mm,in=130,out=50,looseness=50}}
    \node[vertex] (1) at (-1,0) {};
   \node[vertex] (2) at (0,0) {};
    \node (3) at (0.5,0) {\ldots};
    \node[vertex] (4) at (1,0) {};
    \node[vertex] (5) at (2,0) {};
   
    \path (1) edge [edge] node {$e_1$} (2) ;
    \path (4) edge [edge] node {$e_N$} (5);
\end{tikzpicture}
\end{center}
with $N$ edges and no loops with exactly $\binom{N}{N}=1$ path of length $N$. However,
there is \underline{no} graph with $3$ edges and no loops and $\binom{3}{2}=3$ different paths
of length $2$:
\begin{center}
\begin{tikzpicture}[auto,swap]
\tikzstyle{vertex}=[circle,fill=black,minimum size=3pt,inner sep=0pt]
\tikzstyle{edge}=[draw,->]
\tikzset{every loop/.style={min distance=20mm,in=130,out=50,looseness=50}}
    \node[vertex] (1) at (-1,0) {};
   \node[vertex] (2) at (0,0) {};
    \node[vertex] (3) at (1,0) {};
    \node[vertex] (4) at (2,0) {};
   
    \path (1) edge [edge] node {$e_1$} (2) ;
    \path (2) edge [edge] node {$e_2$} (3);
    \path (3) edge [edge] node {$e_3$} (4);
\end{tikzpicture}
\end{center}

\begin{center}
\begin{tikzpicture}[auto,swap]
\tikzstyle{vertex}=[circle,fill=black,minimum size=3pt,inner sep=0pt]
\tikzstyle{edge}=[draw,->]
\tikzset{every loop/.style={min distance=20mm,in=130,out=50,looseness=50}}
    \node[vertex] (1) at (-1,0) {};
   \node[vertex] (2) at (0,0) {};
    \node[vertex] (3) at (1,0) {};
    \node[vertex] (4) at (1,1) {};
    \path (1) edge [edge] node {$e_1$} (2);
    \path (2) edge [edge] node {$e_2$} (3);
    \path (2) edge [edge] node[above] {$e_3$} (4);
\end{tikzpicture}
\end{center}

\begin{center}
\begin{tikzpicture}[auto,swap]
\tikzstyle{vertex}=[circle,fill=black,minimum size=3pt,inner sep=0pt]
\tikzstyle{edge}=[draw,->]
\tikzset{every loop/.style={min distance=20mm,in=130,out=50,looseness=50}}
    \node[vertex] (1) at (-1,0) {};
   \node[vertex] (2) at (0,0) {};
    \node[vertex] (3) at (1,0) {};
    \path (1) edge [edge] node {$e_1$} (2);
    \path (2) edge [edge] node {$e_2$} (3);
    \path (2) edge [edge, bend left=80] node[above] {$e_3$} (3);
\end{tikzpicture}
\end{center}

\begin{center}
\begin{tikzpicture}[auto,swap]
\tikzstyle{vertex}=[circle,fill=black,minimum size=3pt,inner sep=0pt]
\tikzstyle{edge}=[draw,->]
\tikzset{every loop/.style={min distance=20mm,in=130,out=50,looseness=50}}
    \node[vertex] (1) at (-1,0) {};
   \node[vertex] (2) at (0,0) {};
    \node[vertex] (3) at (1,0) {};
    \path (1) edge [edge] node {$e_1$} (2);
    \path (2) edge [edge] node {$e_2$} (3);
    \path (1) edge [edge, bend left=80] node[above] {$e_3$} (2);
\end{tikzpicture}
\end{center}

\begin{center}
\begin{tikzpicture}[auto,swap]
\tikzstyle{vertex}=[circle,fill=black,minimum size=3pt,inner sep=0pt]
\tikzstyle{edge}=[draw,->]
\tikzset{every loop/.style={min distance=20mm,in=130,out=50,looseness=50}}
    \node[vertex] (1) at (-1,0) {};
   \node[vertex] (2) at (0,0) {};
    \node[vertex] (3) at (1,0) {};
    \node[vertex] (4) at (-1,1) {};
    \path (1) edge [edge] node {$e_1$} (2);
    \path (2) edge [edge] node {$e_2$} (3);
    \path (4) edge [edge] node[above] {$e_3$} (2);
\end{tikzpicture}
\end{center}

\begin{center}
\begin{tikzpicture}[auto,swap]
\tikzstyle{vertex}=[circle,fill=black,minimum size=3pt,inner sep=0pt]
\tikzstyle{edge}=[draw,->]
\tikzset{every loop/.style={min distance=20mm,in=130,out=50,looseness=50}}
    \node[vertex] (1) at (-1,0) {};
   \node[vertex] (2) at (0,0) {};
    \node[vertex] (3) at (1,0) {};
    \node[vertex] (4) at (2,0) {};
   
    \path (1) edge [edge] node {$e_3$} (2) ;
    \path (2) edge [edge] node {$e_1$} (3);
    \path (3) edge [edge] node {$e_2$} (4);
\end{tikzpicture}
\end{center}
There are at most $2$ different paths of lenght $2$.
\begin{proposition}
Let $E$ be a graph with $N\geq 2$ edges and no loops. Then there are at most two different paths of length $N-1$.
\end{proposition}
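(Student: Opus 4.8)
The plan is to organize the paths of length $N-1$ according to the unique edge that each of them leaves out. Since $E$ has no loops, the proof of Theorem~1.1 shows that an edge never repeats inside a path, so a path of length $N-1$ uses $N-1$ of the $N$ edges and therefore omits exactly one edge. By Corollary~1.6 a fixed set of $N-1$ edges can be strung into a path in at most one way, whence two distinct paths of length $N-1$ must omit two distinct edges. Consequently the number of paths of length $N-1$ equals the number of edges $f$ for which the remaining $N-1$ edges do form a path. I also record that, having no loops, such a path cannot revisit a vertex — a repeated vertex would bound a loop exactly as in the proof of Theorem~1.1 — so each path of length $N-1$ is a simple directed path, a ``line'', through $N$ distinct vertices.

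If no path of length $N-1$ exists, the bound holds vacuously, so I fix one such path $P\colon u_0\to u_1\to\cdots\to u_{N-1}$, writing $e_i$ for the edge $u_{i-1}\to u_i$ and $f_0$ for the single omitted edge; thus the edges of $E$ are precisely $e_1,\dots,e_{N-1}$ and $f_0$. Every \emph{other} path of length $N-1$ must omit one of the $e_k$, so it suffices to prove that $E\setminus\{e_k\}$ is a line for at most one index $k$. Deleting $e_k$ splits $P$ into the two directed sublines $A=(u_0\to\cdots\to u_{k-1})$ and $B=(u_k\to\cdots\to u_{N-1})$, and the only edge of $P$ running between $A$ and $B$ is $e_k$ itself; hence after its removal the lone extra edge $f_0$ is the only possible bridge from $A$ to $B$.

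Now $A$ and $B$ are rigid: any line built from their edges together with $f_0$ must traverse each of them in full, joined only through $f_0$. The order $B,\,f_0,\,A$ is impossible, because it would require $f_0\colon u_{N-1}\to u_0$ and close the directed loop $u_0\to\cdots\to u_{N-1}\to u_0$, which $E$ does not contain. Hence the line must read $A,\,f_0,\,B$, and for this concatenation to be a simple path (every vertex of out- and in-degree at most one) the edge $f_0$ has to run from the end of $A$ to the start of $B$, that is $f_0\colon u_{k-1}\to u_k$, parallel to $e_k$; any other attachment point creates a vertex of degree two, i.e.\ a branching, and fails. I would make this rigorous with the imbalance $\delta(v)=\mathrm{out}(v)-\mathrm{in}(v)$, since a line is characterised among loopless graphs by $\delta=+1$ at one vertex, $\delta=-1$ at one vertex and $\delta=0$ elsewhere, and re-inserting $f_0$ must convert the four nonzero values of $A\sqcup B$ back to this pattern. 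Either way $f_0$ has a single fixed source and target, so the consecutive pair $(u_{k-1},u_k)=(s(f_0),t(f_0))$ pins down $k$ uniquely, leaving at most one admissible $e_k$ and hence at most two paths of length $N-1$ in all.

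The part I expect to require genuine care is the boundary bookkeeping, namely when $A$ or $B$ degenerates to a single vertex — this happens precisely when the deleted edge is terminal ($e_1$ or $e_{N-1}$) — and the case when an endpoint of $f_0$ lies off $P$. In the degenerate situation the freed endpoint ($u_0$ or $u_{N-1}$) simply becomes isolated and $f_0$ may instead extend the surviving subline at its free end; one then finds that $E$ is itself a single line of length $N$, whose only two length-$(N-1)$ paths are its two end-truncations. The crux is to verify that these exceptional configurations each contribute at most one further omittable edge and cannot be superimposed on the generic parallel-edge case — intuitively because ``$f_0$ parallel to $e_k$'' keeps both endpoints of $f_0$ on $P$, whereas an end-extension needs a genuinely new vertex — so that the total never climbs to three.
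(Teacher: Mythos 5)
Your strategy is sound and genuinely different from the paper's. The paper's proof of this proposition fixes the spine $u_0\to u_1\to\cdots\to u_{N-1}$ and then enumerates, essentially by pictures, the positions where the one spare edge can be attached so as to create a second path of length $N-1$ (prepended, appended, parallel to some $e_k$, or hanging between the spine and a new vertex near either end), checking that each configuration yields exactly two such paths; the completeness of that list is asserted rather than argued. You instead organize the count by the omitted edge: since edges cannot repeat in a loop-free graph and a given set of edges can be strung into a path in at most one way (this is the paper's Corollary~1.7, resting on Theorem~1.6 --- your citations ``Theorem~1.1'' and ``Corollary~1.6'' correspond to the paper's Theorem~1.4 and Corollary~1.7), every path of length $N-1$ is determined by the unique edge it leaves out, so everything reduces to showing that at most one $e_k$ is omittable besides $f_0$. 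Your two central observations --- that the order $B,f_0,A$ would force $f_0\colon u_{N-1}\to u_0$ and hence a loop, and that in the generic case $2\le k\le N-2$ the order $A,f_0,B$ forces $f_0$ parallel to $e_k$, pinning $k$ down --- are correct, and they turn the paper's pictorial enumeration into an argument that the list of configurations is exhaustive. That is what your route buys.

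The one place where your write-up falls short of a proof is the boundary analysis you flag yourself, and there one assertion is wrong as stated: it is not true that in the degenerate situation ``$E$ is itself a single line of length $N$.'' If $f_0$ runs from a vertex off $P$ into $u_1$ (this is precisely the paper's second picture), then removing $e_1$ does leave a path of length $N-1$, yet $E$ is the spine with a pendant edge into its second vertex, not a line; the mirror case $s(f_0)=u_{N-2}$ with $t(f_0)$ off $P$ behaves the same way. The conclusion survives, and your own endpoint bookkeeping finishes the job once carried out: omitting $e_k$ with $2\le k\le N-2$ forces $f_0\colon u_{k-1}\to u_k$; omitting $e_1$ forces either $t(f_0)=u_1$ with $s(f_0)$ equal to $u_0$ or off $P$, or else $s(f_0)=u_{N-1}$ with $t(f_0)$ off $P$; omitting $e_{N-1}$ forces the mirror-image patterns. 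Since $u_0,\ldots,u_{N-1}$ are distinct, no single edge $f_0$ can satisfy the forced pattern for two different values of $k$, so at most one $e_k$ is omittable and the bound of two follows. This is routine verification rather than a missing idea, but it must be written out --- and the false ``single line'' claim repaired --- before your proposal is complete.
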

\begin{proof}
A path of length $k$ must be of the form
\begin{center}
\begin{tikzpicture}[auto,swap]
\tikzstyle{vertex}=[circle,fill=black,minimum size=3pt,inner sep=0pt]
\tikzstyle{edge}=[draw,->]
\tikzset{every loop/.style={min distance=20mm,in=130,out=50,looseness=50}}
    \node[vertex] (1) at (-1,0) {};
   \node[vertex] (2) at (0,0) {};
    \node (3) at (0.5,0) {\ldots};
    \node[vertex] (4) at (1,0) {};
    \node[vertex] (5) at (2,0) {};
   
    \path (1) edge [edge] node {$e_1$} (2) ;
    \path (4) edge [edge] node {$e_k$} (5);
\end{tikzpicture}
\end{center}
so, if we have at least one path of length $N-1$, our graph must be of the form
\begin{center}
\begin{tikzpicture}[auto,swap]
\tikzstyle{vertex}=[circle,fill=black,minimum size=3pt,inner sep=0pt]
\tikzstyle{edge}=[draw,->]
\tikzset{every loop/.style={min distance=20mm,in=130,out=50,looseness=50}}
    \node[vertex] (1) at (-1,0) {};
   \node[vertex] (2) at (0,0) {};
    \node (3) at (0.5,0) {\ldots};
    \node[vertex] (4) at (1,0) {};
    \node[vertex] (5) at (2,0) {};
   
    \path (1) edge [edge] node {$e_1$} (2) ;
    \path (4) edge [edge] node {$e_{N-1}$} (5);
\end{tikzpicture}
\end{center}
and $e_N$ attached somewhere. The only attachment possibilities increasing the number of paths
of length $N-1$ are:
\begin{center}
\begin{tikzpicture}[auto,swap]
\tikzstyle{vertex}=[circle,fill=black,minimum size=3pt,inner sep=0pt]
\tikzstyle{edge}=[draw,->]
\tikzset{every loop/.style={min distance=20mm,in=130,out=50,looseness=50}}
   \node[vertex] (0) at (-2,0) {};
    \node[vertex] (1) at (-1,0) {};
   \node[vertex] (2) at (0,0) {};
    \node (3) at (0.5,0) {\ldots};
    \node[vertex] (4) at (1,0) {};
    \node[vertex] (5) at (2,0) {};
   
   \path (0) edge [edge] node {$e_N$} (1);
    \path (1) edge [edge] node {$e_1$} (2) ;
    \path (4) edge [edge] node {$e_{N-1}$} (5);
\end{tikzpicture}
\end{center}
\begin{center}
\begin{tikzpicture}[auto,swap]
\tikzstyle{vertex}=[circle,fill=black,minimum size=3pt,inner sep=0pt]
\tikzstyle{edge}=[draw,->]
\tikzset{every loop/.style={min distance=20mm,in=130,out=50,looseness=50}}
  \node[vertex] (0) at (-1,1) {};
    \node[vertex] (1) at (-1,0) {};
   \node[vertex] (2) at (0,0) {};
    \node (3) at (0.5,0) {\ldots};
    \node[vertex] (4) at (1,0) {};
    \node[vertex] (5) at (2,0) {};
   
   \path (0) edge [edge] node[above] {$e_N$} (2);
    \path (1) edge [edge] node {$e_1$} (2) ;
    \path (4) edge [edge] node {$e_{N-1}$} (5);
\end{tikzpicture}
\end{center}
\begin{center}
\begin{tikzpicture}[auto,swap]
\tikzstyle{vertex}=[circle,fill=black,minimum size=3pt,inner sep=0pt]
\tikzstyle{edge}=[draw,->]
\tikzset{every loop/.style={min distance=20mm,in=130,out=50,looseness=50}}
    \node[vertex] (1) at (-1,0) {};
   \node[vertex] (2) at (0,0) {};
    \node (3) at (0.5,0) {\ldots};
    \node[vertex] (4) at (1,0) {};
    \node[vertex] (5) at (2,0) {};
   
   \path (1) edge [edge, bend left=80, above] node {$e_N$} (2);
    \path (1) edge [edge] node {$e_1$} (2) ;
    \path (4) edge [edge] node {$e_{N-1}$} (5);
\end{tikzpicture}
\end{center}
\begin{center}
\begin{tikzpicture}[auto,swap]
\tikzstyle{vertex}=[circle,fill=black,minimum size=3pt,inner sep=0pt]
\tikzstyle{edge}=[draw,->]
\tikzset{every loop/.style={min distance=20mm,in=130,out=50,looseness=50}}
    \node[vertex] (1) at (-1,0) {};
   \node[vertex] (2) at (0,0) {};
    \node (3) at (0.5,0) {\ldots};
    \node[vertex] (4) at (1,0) {};
    \node[vertex] (5) at (2,0) {};
    \node (6) at (2.5,0) {\ldots};
    \node[vertex] (7) at (3,0) {};
    \node[vertex] (8) at (4,0) {};
   
    \path (1) edge [edge] node {$e_1$} (2) ;
  \path (4) edge [edge,above,bend left=80] node {$e_N$} (5);
    \path (4) edge [edge] node {$e_j$} (5);
    \path (7) edge [edge] node {$e_{N-1}$} (8);
\end{tikzpicture}
\end{center}
\begin{center}
\begin{tikzpicture}[auto,swap]
\tikzstyle{vertex}=[circle,fill=black,minimum size=3pt,inner sep=0pt]
\tikzstyle{edge}=[draw,->]
\tikzset{every loop/.style={min distance=20mm,in=130,out=50,looseness=50}}
    \node[vertex] (1) at (-1,0) {};
   \node[vertex] (2) at (0,0) {};
    \node (3) at (0.5,0) {\ldots};
    \node[vertex] (4) at (1,0) {};
    \node[vertex] (5) at (2,0) {};
    \node (6) at (2.5,0) {\ldots};
    \node[vertex] (7) at (3,0) {};
    \node[vertex] (8) at (4,0) {};
   
    \path (1) edge [edge] node {$e_1$} (2) ;
  \path (7) edge [edge,above,bend left=80] node {$e_N$} (8);
    \path (4) edge [edge] node {$e_j$} (5);
    \path (7) edge [edge] node {$e_{N-1}$} (8);
\end{tikzpicture}
\end{center}
\begin{center}
\begin{tikzpicture}[auto,swap]
\tikzstyle{vertex}=[circle,fill=black,minimum size=3pt,inner sep=0pt]
\tikzstyle{edge}=[draw,->]
\tikzset{every loop/.style={min distance=20mm,in=130,out=50,looseness=50}}
  \node[vertex] (0) at (2,1) {};
    \node[vertex] (1) at (-1,0) {};
   \node[vertex] (2) at (0,0) {};
    \node (3) at (0.5,0) {\ldots};
    \node[vertex] (4) at (1,0) {};
    \node[vertex] (5) at (2,0) {};
   
   \path (4) edge [edge] node[above] {$e_N$} (0);
    \path (1) edge [edge] node {$e_1$} (2) ;
    \path (4) edge [edge] node {$e_{N-1}$} (5);
\end{tikzpicture}
\end{center}
\begin{center}
\begin{tikzpicture}[auto,swap]
\tikzstyle{vertex}=[circle,fill=black,minimum size=3pt,inner sep=0pt]
\tikzstyle{edge}=[draw,->]
\tikzset{every loop/.style={min distance=20mm,in=130,out=50,looseness=50}}
   \node[vertex] (0) at (-2,0) {};
    \node[vertex] (1) at (-1,0) {};
   \node (2) at (-0.5,0) {\ldots};
    \node[vertex] (3) at (0,0) {};
    \node[vertex] (4) at (1,0) {};
    \node[vertex] (5) at (2,0) {};
   
   \path (0) edge [edge] node {$e_1$} (1);
    \path (3) edge [edge] node {$e_{N-1}$} (4);
    \path (4) edge [edge] node {$e_{N}$} (5);
\end{tikzpicture}
\end{center}
In each of the above cases, we have exactly two different paths of length $N-1$.
\end{proof}

\begin{lemma}
Let $E$ be a graph with $N\geq 2$ edges and no loops. Assume that $N\geq k\geq N-k$.
Then there are at most $2^{N-k}$ different paths of length $k$ in $E$ and the bound is optimal.
\end{lemma}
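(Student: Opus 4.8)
The plan is to prove the two assertions separately: first that the value $2^{N-k}$ is actually attained, and then that it is never exceeded. Throughout put $m:=N-k$, so the hypothesis reads $k\ge m$, and write $f_k(E)$ for the number of paths of length $k$ in $E$.

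For optimality I would simply exhibit a graph and count. Take vertices $v_0,\ldots,v_k$ and, for each segment $i\in\{1,\ldots,k\}$, join $v_{i-1}$ to $v_i$: by a \emph{single} edge when $m<i\le k$ and by a \emph{pair} of parallel edges when $1\le i\le m$. This graph has $2m+(k-m)=k+m=N$ edges; it has no loops because every edge raises the index $i$, so no rearrangement of a tuple of its edges can close up (by the rearrangement theorem above). Since its only vertices are $v_0,\ldots,v_k$ and edges run solely between consecutive ones, any path of length $k$ must run from $v_0$ to $v_k$ through all $k$ segments, choosing freely between the two parallel edges on each of the first $m$ segments and having no choice on the rest; hence there are exactly $2^m=2^{N-k}$ such paths. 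This is the step that consumes the hypothesis: $k\ge m$ is exactly what guarantees that the $k$ available segments can carry all $m$ doublings.

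For the upper bound I would induct on $m$. The base $m=0$ is immediate, since a path of length $N$ uses every edge and a fixed edge set orders into at most one path (rearrangement theorem), so $f_k(E)\le 1=2^0$; the base $m=1$ is exactly the preceding Proposition, giving $f_k(E)\le 2=2^1$. For the step the elementary tool is the deletion identity
\[
f_k(E)=f_k(E\setminus g)+p_g ,
\]
valid for any edge $g$, where $p_g$ counts the length-$k$ paths through $g$. Deleting $g$ creates no loops and drops $m$ by one (with $k\ge m-1$ still holding), so the inductive hypothesis gives $f_k(E\setminus g)\le 2^{m-1}$; it therefore suffices to produce \emph{one} edge $g$ with $p_g\le 2^{m-1}$, since then $f_k(E)\le 2^{m-1}+2^{m-1}=2^m$.

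Everything thus reduces to the claim that some edge lies on at most $2^{m-1}$ of the length-$k$ paths. First I would settle the case with no branching: if every vertex has in- and out-degree at most one, then $E$ is a disjoint union of simple paths of lengths $\ell_1,\ell_2,\dots$ with $\sum_j\ell_j=N$, and counting windows gives $f_k(E)=\sum_j\max(\ell_j-k+1,0)\le N-k+1=m+1\le 2^m$ outright. The hard part will be the branching case, where naive averaging is useless: the mean of $p_g$ over the $N$ edges is $k\,f_k(E)/N$, which is $\le 2^{m-1}$ only for $k\le N/2$, i.e.\ precisely \emph{outside} our regime. The obstruction is ``sliding'': a single doubled segment buried in a long spine already yields length-$k$ paths occupying many overlapping windows, so that every individual edge can be used heavily. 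My plan is to locate the cheap edge structurally --- at an extreme end of a longest path, or at a branch vertex of least topological height --- and, failing a directly available cheap edge, first to apply count-non-decreasing reshaping moves that straighten branches toward a single spine and coalesce parallel structure, driving $E$ toward the canonical graph above without ever lowering $f_k(E)$. Establishing that one of these two options is always available, while staying inside $k\ge m$ (which is what keeps the canonical target optimal and, I expect, forces the reshaping to terminate exactly there rather than overshoot), is the technical heart of the argument; matching the abstract, it is naturally organized as two interleaved terminating algorithms --- one normalizing the graph, the other consuming a surplus edge through the deletion identity --- run alternately until the spine-with-$m$-doublings is reached and $f_k$ equals $2^{N-k}$.
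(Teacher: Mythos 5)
Your optimality half is correct and is essentially the paper's own construction: the spine $v_0\to v_1\to\cdots\to v_k$ with the first $N-k$ segments doubled has no loops, has $N$ edges, and carries exactly $2^{N-k}$ paths of length $k$; and $k\ge N-k$ is precisely what lets the $N-k$ doublings fit among the $k$ segments. The upper bound, however, is not proved. Your induction on $m=N-k$ via the deletion identity $f_k(E)=f_k(E\setminus g)+p_g$ is a sound skeleton, but it stands or falls with the claim that \emph{some} edge $g$ lies on at most $2^{m-1}$ of the length-$k$ paths, and you leave exactly that claim open (your ``technical heart''). This is not a routine gap: as you yourself note, averaging fails in the regime $k\ge N-k$, and in the extremal graph every undoubled spine edge lies on \emph{all} $2^{m}$ paths while each doubled edge lies on exactly $2^{m-1}$ of them, so the claim is tight and a cheap edge must be located by a genuine structural argument, which is not supplied. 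The fallback you sketch (count-non-decreasing reshaping moves driving $E$ to the canonical graph) is not a smaller lemma but the whole difficulty restated; the paper does develop such reshaping algorithms, at considerable length, but only later, for the general theorem $P^N_k=(n+1)^rn^{k-r}$ --- not for this lemma.

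The paper's proof of this lemma is instead a short direct argument that avoids induction entirely. If $E$ has no path of length $k$ the count is $0$; otherwise fix one such path $p_1$ and let $F^1$ be the set of the remaining $N-k$ edges. Every path of length $k$ consists of some edges of $F^1$ together with edges of $p_1$, and it is \emph{uniquely determined} by the subset of $F^1$ it uses: since $E$ has no loops, a given set of edges can be arranged into a path in at most one way (the rearrangement theorem you also cite), and there is only one way the edges of $p_1$ can be inserted to connect the chosen edges of $F^1$ into a path of length $k$. Hence the map sending a length-$k$ path to its set of edges outside $p_1$ is injective, and the number of such paths is at most $\sum_{l=0}^{N-k}\binom{N-k}{l}=2^{N-k}$. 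If you wish to rescue your own route, the statement you must actually prove is the existence of an edge with $p_g\le 2^{m-1}$; proving the injectivity above is substantially easier and is what the paper does.
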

\begin{proof}
One can always construct a graph with a path $p_1$ of length $k$. Then there remain precisely $N-k$
many edges that can be used to construct more paths. Call the set of all these edges $F^1$.
Any path  of length $k$ is composed out of $l$ edges in $F^1$ and $k-l$ edges from the path $p_1$.
For instance:
\begin{center}
\begin{tikzpicture}[auto,swap]
\tikzstyle{vertex}=[circle,fill=black,minimum size=3pt,inner sep=0pt]
\tikzstyle{edge}=[draw,->]
\tikzset{every loop/.style={min distance=20mm,in=130,out=50,looseness=50}}
    \node[vertex] (1) at (-1,0) {};
   \node[vertex] (2) at (0,0) {};
    \node (3) at (0.5,0) {\ldots};
    \node[vertex] (4) at (1,0) {};
    \node[vertex] (5) at (2,0) {};
    \node (6) at (2.5,0) {\ldots};
    \node[vertex] (7) at (3,0) {};
    \node[vertex] (8) at (4,0) {};
    \node[vertex] (9) at (0.5,-0.5) {};
    \node (10) at (0, -1) {\rotatebox{45}{\ldots}};
    \node[vertex] (11) at (-0.5,-1.5) {};
    \node[vertex] (12) at (-1,-2) {};
    \node[vertex] (13) at (2.5,-0.5) {};
    \node (14) at (3,-1) {\rotatebox{-45}{\ldots}};
    \node[vertex] (15) at (3.5,-1.5) {};
    \node[vertex] (16) at (4,-2) {};
   
    \path (1) edge [edge] node {$e_1$} (2) ;
    \path (4) edge [edge] node {$e_j$} (5);
    \path (7) edge [edge] node {$e_{k}$} (8);
  \path (9) edge[edge] node {} (4);
  \path (5) edge[edge] node {} (13);
  \path (12) edge[edge] node {} (11);
  \path (15) edge[edge] node {} (16);
\end{tikzpicture}
\end{center}
Any such a path is uniquely determined by the choice of $l$ edges from $F^1$ because there is always
only one way in which edges from the path $p_1$ can connect disconnected subpaths composed from
edges in $F^1$ and edges in a path cannot be rearranged. This gives at most $\binom{N-k}{l}$
possibilities for having paths of length $k$ with $l$ edges from $F^1$. As $l$ can vary from $0$
to $N-k$, there are at most $\sum_{l=0}^{N-k}\binom{N-k}{l}=2^{N-k}$ different paths
of length $k$. The bound is optimal because the graph
\begin{center}
\begin{tikzpicture}[auto,swap]
\tikzstyle{vertex}=[circle,fill=black,minimum size=3pt,inner sep=0pt]
\tikzstyle{edge}=[draw,->]
\tikzset{every loop/.style={min distance=20mm,in=130,out=50,looseness=50}}
    \node[vertex] (1) at (-1,0) {};
   \node[vertex] (2) at (0,0) {};
    \node (3) at (0.5,0) {\ldots};
    \node[vertex] (4) at (1,0) {};
    \node[vertex] (5) at (2,0) {};
    \node (6) at (2.5,0) {\ldots};
    \node[vertex] (7) at (3,0) {};
    \node[vertex] (8) at (4,0) {};
   
    \path (1) edge [edge] node {$e_1$} (2) ;
  \path (1) edge [edge,above,bend left=80] node {$e_{k+1}$} (2);
  \path (4) edge [edge,above,bend left=80] node {$e_N$} (5);
    \path (4) edge [edge] node {$e_j$} (5);
    \path (7) edge [edge] node {$e_{k}$} (8);
\end{tikzpicture}
\end{center}
has exactly $2^{N-k}$ edges of length $k$.
\end{proof}
\begin{theorem}\footnote{Joint work with Alexandru Chirvasitu.}
Let $E$ be a graph with $N\geq 2$ edges and no loops, and let $1\leq k\leq N=:nk+r$,
$0\leq r\leq k-1$. Then there are at most
\[
\boxed{
P^N_k:=(n+1)^rn^{k-r}
}
\]
different paths of length $k$ and the bound is optimal.
\end{theorem}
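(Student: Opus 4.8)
The plan is to split the statement into attainability of $P^N_k$ and the matching upper bound, and to prove the upper bound by first reshaping $E$ into a very rigid shape on which the count is transparent.

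For attainability I would exhibit the \emph{bamboo} graph: a chain of vertices $v_0,v_1,\dots,v_k$ in which the single edge from $v_{i-1}$ to $v_i$ is replaced by a bundle of $a_i\geq 1$ parallel edges, the bundle sizes chosen as equal as possible, i.e.\ $r$ of them equal to $n+1$ and $k-r$ equal to $n$. This graph has no loops, has $\sum_i a_i=nk+r=N$ edges, and its paths of length $k$ are precisely the tuples obtained by picking one edge from each bundle, so their number is $\prod_{i=1}^k a_i=(n+1)^r n^{k-r}$. That this is the largest value of $\prod_{i=1}^k a_i$ over positive integers with fixed sum $N$ is the elementary extremal fact that replacing two factors differing by at least $2$ by two factors closer together strictly increases the product.

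For the upper bound, note first that ``no loops'' means $E$ is acyclic, so a length-$k$ walk can never revisit a vertex and is automatically a path, and by Theorem~1.6 a path in $E$ is completely determined by its set of edges. My strategy then has two stages. In the first stage I would run path-count-nondecreasing reshaping moves — the algorithms announced in the abstract — to bring $E$ to a \emph{leveled} graph, one carrying a height $\ell\colon E^0\to\{0,1,2,\dots\}$ with $\ell(t(e))=\ell(s(e))+1$ for every edge $e$. The second stage is then clean: color each edge by $c(e):=\ell(s(e))\bmod k$. Along any path $(e_1,\dots,e_k)$ the source heights $\ell(s(e_1)),\dots,\ell(s(e_1))+(k-1)$ are $k$ consecutive integers, so they realize every residue modulo $k$ exactly once and the path is ``rainbow''. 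Writing $a_j:=|c^{-1}(j)|$ for $j=0,\dots,k-1$, the map sending a length-$k$ path to the tuple of its edges listed by color is injective (two paths with the same tuple have the same edge set, hence coincide by Theorem~1.6), so the number of length-$k$ paths is at most $\prod_{j=0}^{k-1}a_j$. Since $\sum_j a_j=N$, the extremal fact from the previous paragraph yields $\prod_j a_j\leq (n+1)^r n^{k-r}=P^N_k$, and any empty color class forces the count to be $0$, so the bound holds in all cases.

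The main obstacle is the first stage: constructing the leveling moves and proving that they terminate while never decreasing the number of length-$k$ paths and never creating a loop. This step is genuinely necessary, not a convenience: an acyclic graph need not admit any rainbow $k$-coloring at all, since a ``shortcut'' edge bypassing intermediate vertices forces a jump in height, and one can build acyclic graphs whose edge-adjacency graph carries an odd cycle, ruling out a proper $k$-coloring of the edges as they stand. I therefore expect two interacting moves — one that re-attaches the endpoints of each height-skipping edge so that heights rise by exactly one, and one that redistributes the edges produced — applied alternately, each terminating with a graph on which the other acts, exactly as the abstract describes. The delicate points will be verifying loop-freeness and the monotonicity of the length-$k$ count under a single move, and arguing termination of the alternation at a leveled graph; once these are in hand, the coloring estimate above closes the gap to the bamboo and shows $P^N_k$ is optimal.
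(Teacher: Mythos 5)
Your attainability half and your ``stage two'' are both sound: the bamboo with $r$ bundles of size $n+1$ and $k-r$ bundles of size $n$ realizes the bound; the balancing inequality (replacing two factors differing by at least $2$ by closer ones increases the product) is exactly the elementary step the paper uses at the very end; and on a leveled graph the rainbow-coloring count (injectivity of path $\mapsto$ tuple-of-edges-by-color via Theorem~1.6, then $\prod_j a_j\leq(n+1)^r n^{k-r}$) is correct and genuinely slicker than the paper's endgame, which instead keeps collapsing the graph (the $F_k$-to-$G_l$ moves, with a term-by-term comparison of path counts) until only $k+1$ vertices remain and only then redistributes edges. But your proof has a genuine gap exactly where you yourself locate ``the main obstacle'': the leveling stage is never carried out. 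You do not define the re-attachment moves, you do not prove that a single move preserves loop-freeness and never decreases the number of length-$k$ paths, and you do not prove termination; you only state that you \emph{expect} two such interacting moves to exist. That expectation is the real content of the theorem, and it is where the paper spends essentially its entire proof.

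This is not a routine verification that can be left as a remark, because a naive re-attachment of a height-skipping edge can strictly \emph{decrease} the number of $k$-paths. In the paper's own accounting: if the shifted edges are the first edges of $x$ paths of length $k-1$, then moving their source from the $i$-th vertex of the spine to the $(i+1)$-st loses $a_i x$ paths of length $k$ and gains $a_{i+1}x$, a net loss whenever $a_i>a_{i+1}$. The paper's remedy is a whole pipeline: first delete stray vertices and identify \emph{unrelated} vertices so that every vertex lies on the longest path (this is also what rules out creating loops), then interleave the shifting move with a bundle-swapping move that maintains $a_1\leq a_2\leq\cdots\leq a_k$, verify the loss/gain inequality $L\leq G$ term by term, and give a separate argument that paths avoiding the shifted edges are not lost; termination comes from decreasing the number of vertices at each $G_l$ step. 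Any correct execution of your first stage will have to reproduce machinery of roughly this complexity (or find a genuinely new argument for it), so as it stands your proposal is a correct reduction of the theorem to its hardest step, together with a nice alternative finish, but not a proof.
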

\begin{remark}{\rm 
For $k>N-k$, we have $N=1\cdot k+(N-k)$, so $P_N(k)=2^{N-k}\cdot 1^{k-(N-k)}=2^{N-k}$.
Also, if $k=N-k$, then $N=2\cdot k+0$, so $P_N(k)=3^0\cdot2^k=2^{N-k}$.
Hence the preceding proposition proves the theorem for $k\geq N-k$.}
\end{remark}
\begin{proof}
Our first step is to transform the graph $E$ into a graph $E_1$ with the same amount
of edges but with all vertices on its longest path $p_1$. We need to show that we can
always do this without introducing loops or decreasing the amount of different paths
of length $k$.
Clearly, we can first remove all vertices in $E^0$ that are not in $s(E^1)\cup t(E^1)$.
This way we end up with finitely many vertices. Furthermore, we identify unrelated
vertices. In any graph, we call a pair of vertices \underline{unrelated} iff there is
no path between them. If our graph admits a pair of unrelated vertices,
then we can choose such a pair and identify the vertices. We repeat the procedure
until there are no unrelated vertices. We call the thus obtained graph $E_1$.
\begin{lemma}
$E_1$ is a graph with $N$ edges, no loops and all vertices on its longest path $p_1$:
\begin{center}
\begin{tikzpicture}[auto,swap]
\tikzstyle{vertex}=[circle,fill=black,minimum size=3pt,inner sep=0pt]
\tikzstyle{edge}=[draw,->]
\tikzset{every loop/.style={min distance=20mm,in=130,out=50,looseness=50}}
    \node[vertex] (1) at (-1,0) {};
   \node[vertex] (2) at (0,0) {};
    \node (3) at (0.5,0) {\ldots};
    \node[vertex] (4) at (1,0) {};
    \node[vertex] (5) at (2,0) {};
    \node (6) at (2.5,0) {\ldots};
    \node[vertex] (7) at (3,0) {};
    \node[vertex] (8) at (4,0) {};
   
    \path (1) edge [edge] node {$e_1$} (2) ;
    \path (1) edge [edge, bend left=80] node {} (5);
  \path (1) edge [edge,above,bend left=80] node {} (2);
  \path (4) edge [edge,above,bend left=80] node {} (8);
    \path (4) edge [edge] node {$e_j$} (5);
    \path (7) edge [edge] node {$e_{l}$} (8);
    \path (7) edge [edge, bend left=80] node {} (8);
\end{tikzpicture}
\end{center}
It admits at least as many different paths of length $k$ as $E$.
\end{lemma}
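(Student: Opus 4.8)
The plan is to establish the four asserted properties of $E_1$ — that it has $N$ edges, no loops, all vertices on a longest path, and at least as many length-$k$ paths as $E$ — by tracking them through the construction. Removing the vertices outside $s(E^1)\cup t(E^1)$ affects neither the edges nor any path of positive length, and since $E^1$ is finite it leaves only finitely many vertices; so I would immediately reduce to the identification procedure and induct on the number of identifications performed. It then suffices to analyze a single step: merging an unrelated pair $u,v$ into one vertex $w$, and checking that this step preserves the edge count, never creates a loop, and never decreases the number of length-$k$ paths. Termination is free, as each identification strictly decreases the finite number of vertices.

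Two of the three step-invariants are quick. Merging $u,v\mapsto w$ only relabels endpoints, so no edge is added or deleted and $N$ is preserved. For the path count, a length-$k$ path is just an edge-tuple $(e_1,\dots,e_k)$ with $t(e_i)=s(e_{i+1})$; the quotient map on vertices preserves these equalities, so every length-$k$ path of $E$ remains one after merging, and distinct edge-tuples stay distinct. Hence the number of length-$k$ paths can only grow (the new matchings $t(e)=w=s(f)$ may even create extra ones).

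The hard part will be showing that no loop appears. Here I would use that $u$ and $v$ are unrelated (no path either way) together with the standing no-loops hypothesis. Any loop in the merged graph must pass through $w$, since away from $w$ nothing changed. Cutting the loop at its successive visits to $w$ expresses it as a concatenation of segments that, read in the original graph, run between vertices of $\{u,v\}$ and touch neither $u$ nor $v$ internally. Each such segment is a path $u\to u$, $v\to v$, $u\to v$, or $v\to u$: the first two are loops, forbidden by hypothesis, and the last two are paths between the unrelated pair, which do not exist. So no loop can arise, and by induction $E_1$ is loop-free, has $N$ edges, and has at least as many length-$k$ paths as $E$.

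It remains to pin down the geometry of the terminal graph $E_1$, in which no two vertices are unrelated. I would first observe that reachability ($u\le v$ iff there is a path $u\to v$) is a total order on the finite vertex set: reflexivity is the length-$0$ path, transitivity is concatenation, antisymmetry holds because a pair of opposite paths would concatenate to a loop, and totality is exactly the absence of unrelated pairs. List the vertices as $v_0<v_1<\dots<v_m$. For consecutive $v_i<v_{i+1}$ there is a path $v_i\to v_{i+1}$; any internal vertex $v_j$ of it would satisfy $v_i\le v_j\le v_{i+1}$, forcing $j\in\{i,i+1\}$, and an internal repetition of $v_i$ or $v_{i+1}$ would bound a loop. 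Thus the path visits only $v_i,v_{i+1}$, and being loop-free it must be a single edge $v_i\to v_{i+1}$. Chaining these gives a path $v_0\to v_1\to\dots\to v_m$ through every vertex. Finally, since in a loop-free graph no vertex can repeat within a path (a repeat would bound a loop), every path has length at most $m$; hence this Hamiltonian path is a longest path $p_1$ and contains all vertices, which completes the proof. I expect the step deducing this Hamiltonian chain from mere total comparability — rather than the bookkeeping invariants — to be the genuine crux.
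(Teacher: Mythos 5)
Your proof is correct, but it reaches the conclusion ``all vertices lie on the longest path'' by a genuinely different route than the paper. The paper fixes a longest path $p_1$, supposes some vertex $v$ lies off it, and derives a contradiction by analyzing the connecting paths $q_0,\ldots,q_l$ between $v$ and the vertices of $p_1$: maximality of $p_1$ forces $q_0$ to point into $v$, forces adjacent connecting paths to share orientation, and hence forces $q_l$ to point into $v$, which would extend $p_1$ --- a contradiction. You instead prove that in the terminal graph reachability is a \emph{total order} (antisymmetry from loop-freeness, totality from the absence of unrelated pairs), show that consecutive vertices in this order must be joined by a single edge (any internal vertex of a connecting path is squeezed between them, and a repeat would bound a loop), and chain these edges into a Hamiltonian path, which is automatically longest since loop-freeness forbids repeated vertices. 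Your version buys rigor and degenerate-case coverage where the paper relies on pictorial orientation intuition (``adjacent paths must have the same orientation'' is never really justified), and it also makes explicit the total ordering of vertices that the paper's main theorem later invokes informally (``finitely many totally ordered (by paths) vertices''). On the no-loops invariant the two arguments share the same idea --- a loop through the merged vertex would yield a path between the unrelated pair --- but your cutting-at-successive-visits decomposition also handles loops passing through the merged vertex several times, a case the paper's ``breaking them apart destroys the loop'' glosses over. The price is length: the paper's contradiction argument is a few lines, while your order-theoretic scaffolding is longer, though each individual step is elementary and checkable.
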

\begin{proof}
If identifying two vertices $v_1$ and $v_2$ introduces a loop,
then breaking them apart destroys the loop. Hence the identified $v_1$ and
$v_2$ are on the loop, so there was a path from $v_1$ to $v_2$ or the other
way around, which means that $v_1$ and $v_2$ were not unrelated.
It follows that identifying unrelated vertices introduces no loops.
Next, suppose that all vertices are related but that there is a vertex $v$ that is not
on the path $p_1$:
\begin{center}
\begin{tikzpicture}[auto,swap]
\tikzstyle{vertex}=[circle,fill=black,minimum size=3pt,inner sep=0pt]
\tikzstyle{edge}=[draw,->]
\tikzset{every loop/.style={min distance=20mm,in=130,out=50,looseness=50}}
    \node[vertex] (1) at (-1,0) {};
   \node[vertex] (2) at (0,0) {};
   \node (10) at (0.5,0) {};
    \node (3) at (1,0) {\ldots};
    \node (11) at (1.5,0) {};
    \node[vertex] (4) at (2,0) {};
    \node[vertex] (5) at (3,0) {};
    \node (12) at (3.5,0) {};
    \node (6) at (4,0) {\ldots};
    \node (13) at (4.5,0) {};
    \node[vertex] (7) at (5,0) {};
    \node[vertex] (8) at (6,0) {};
    \node[vertex, label=$v$] (9) at (2.5,-2) {};
   
    \path (1) edge [edge,above] node {$e_1$} (2) ;
    \path (4) edge [edge, above] node {$e_j$} (5);
    \path (7) edge [edge, above] node {$e_{l}$} (8);
  \path (1) edge[edge, below left, bend right=100] node {$q_0$} (9);
  \path (2) edge[edge, below left, bend right=50] node {$q_1$} (9);
  \path (4) edge[edge, above left, bend right] node {$q_{j-1}$} (9);
  \path (5) edge[edge, above right, bend left=40] node {$q_j$} (9);
  \path (7) edge[edge, below right,bend left] node {$q_{l-1}$} (9);
  \path (8) edge[edge, below right, bend left=100] node {$q_l$} (9);
\end{tikzpicture}
\end{center}
The path $q_0$ must go from $s(e_1)$ to $r$ as otherwise $p_1$ would not be
the longest path. Furthermore, the fact that $p_1$ is of maximal length forces
adjacent paths to have the same orientation. Hence all these paths, like $q_0$,
must end in $v$. However, $q_l$ ending in $v$ contradicts the maximality
of the length of $p_1$. Finally, $E_1$ has obviously at least as many paths
of length $\geq 1$ as $E$ because identifying vertices can only increase the number
of such paths.
\end{proof}
We can assume that the length of $p_1$ is $l\geq k$ as otherwise there are
no paths of length $k$. Our next step is to transform $E_1$ into a graph $E_2$
will all edges that start in $s(e_1)$ ending in $t(e_1)$:
\begin{center}
\begin{tikzpicture}[auto,swap]
\tikzstyle{vertex}=[circle,fill=black,minimum size=3pt,inner sep=0pt]
\tikzstyle{edge}=[draw,->]
\tikzset{every loop/.style={min distance=20mm,in=130,out=50,looseness=50}}
   \node[vertex] (0) at (-2,0) {};
    \node[vertex] (1) at (-1,0) {};
   \node[vertex] (2) at (0,0) {};
    \node[vertex] (3) at (1,0) {};
    \node (6) at (1.5,0) {\ldots};
    \node[vertex] (4) at (2,0) {};
    \node[vertex] (5) at (3,0) {};
   
   \path (0) edge[edge] node {$e_1$} (1);
   \path (0) edge[edge, bend left=40] (1);
   \path (0) edge[edge, bend left=80] (1);
   \path (1) edge[edge] node {$e_2$} (2);
   \path (2) edge[edge] node {$e_3$} (3);
   \path (1) edge[edge, bend left] (3);
   \path (4) edge[edge] node {$e_l$} (5);
\end{tikzpicture}
\end{center}
If we have an edge starting in $s(e_1)$ but ending in $t(e_i)$, $i>1$, then we shift
the beginning of such an edge to $s(e_i)$. As there are no edges ending in $s(e_1)$,
we do not loose any paths this way. Now we transform $E_2$ into $E_3$ by shifting
the beginnings of edges from $s(e_2)$ to $t(e_j)$, $j>2$, to $s(e_3)$:
\begin{center}
\begin{tikzpicture}[auto,swap]
\tikzstyle{vertex}=[circle,fill=black,minimum size=3pt,inner sep=0pt]
\tikzstyle{edge}=[draw,->]
\tikzset{every loop/.style={min distance=20mm,in=130,out=50,looseness=50}}
   \node[vertex] (0) at (-2,0) {};
    \node[vertex] (1) at (-1,0) {};
   \node[vertex] (2) at (0,0) {};
    \node[vertex] (3) at (1,0) {};
    \node (6) at (1.5,0) {\ldots};
    \node[vertex] (4) at (2,0) {};
    \node[vertex] (5) at (3,0) {};
   
   \path (0) edge[edge] node {$e_1$} (1);
   \path (0) edge[edge, bend left=40] (1);
   \path (0) edge[edge, bend left=80] (1);
   \path (1) edge[edge] node {$e_2$} (2);
   \path (2) edge[edge] node {$e_3$} (3);
   \path (2) edge[edge, bend left] (3);
   \path (4) edge[edge] node {$e_l$} (5);
\end{tikzpicture}
\end{center}
This time possibly we loose the paths of length $k$ that started in $s(e_1)$ and
involved the just shifted edges, we possibly gain paths of length $k$ that start in $s(e_2)$
and involve the shifted edges. Let $a_i$ denote the number of edges starting at $s(e_i)$.
Then, if the shifted edges are the first of $x$ different paths of length $k-1$, we loose
$a_1\cdot x$ paths of length $k$ but gain $a_2\cdot x$ paths of length $k$.
To ensure that we gain at least as much as we loose, we transform $E_3$ to $E_4$
by switching places of the edges from $s(e_1)$ to $t(e_1)$ with the edges from
$s(e_2)$ to $t(e_2)$, if $a_1>a_2$:
\begin{center}
\begin{tikzpicture}[auto,swap]
\tikzstyle{vertex}=[circle,fill=black,minimum size=3pt,inner sep=0pt]
\tikzstyle{edge}=[draw,->]
\tikzset{every loop/.style={min distance=20mm,in=130,out=50,looseness=50}}
   \node[vertex] (0) at (-2,0) {};
    \node[vertex] (1) at (-1,0) {};
   \node[vertex] (2) at (0,0) {};
    \node[vertex] (3) at (1,0) {};
    \node (6) at (1.5,0) {\ldots};
    \node[vertex] (4) at (2,0) {};
    \node[vertex] (5) at (3,0) {};
   
   \path (1) edge[edge] node {$e_1$} (2);
   \path (1) edge[edge, bend left=40] (2);
   \path (1) edge[edge, bend left=80] (2);
   \path (0) edge[edge] node {$e_2$} (1);
   \path (2) edge[edge] node {$e_3$} (3);
   \path (2) edge[edge, bend left] (3);
   \path (4) edge[edge] node {$e_l$} (5);
\end{tikzpicture}
\end{center}

As the number of paths of length $k$ beginning with a shifted edge is unchanged,
and the number of paths of length $k$ with a shifted edge as the second edge 
is not decreased, the number of paths of length $k$ involving a shifted edge
does not decrease. 
Now we have to make sure that transforming $E_2$ to $E_4$  
we did not decrease the number of all paths of length $k$ not involving
the shifted edges. 

If $k=1$, we are done because in any graph $E$ with
$N$ edges we have $P_N(1)=N$.
If $k\geq 2$, then any path of length $k$ in $E_2$ that does not involve any shifted edge and
 that starts in $s(e_1)$ must involve edges from $s(e_1)$ to $t(e_1)$
and from $s(e_2)$ to $t(e_2)$, 
so the number of paths of length $k$
not involving the shifted edges and starting at the leftmost vertex is
the same in $E_2$ as in $E_4$ even if $a_1>a_2$ and we made the switch:
$a_1a_2y=a_2a_1y$, where $y$ is the number of paths of length $k-2$
not involving the shifted edges and starting at $s(e_3)$.
(In the case $l=k=2$, we take $y=1$.)
Next, concerning the number of paths of length $k$ starting at the second
vertex from the left and not involving the shifted edges, it does not
decrease when moving from $E_2$ to $E_4$ as we have at least as many
edges going from the second to the third vertex and exactly as many paths
not involving the shifted edges starting at the third vertex in $E_2$
as in $E_4$. Finally, the number of paths of length $k$ not involving
the shifted edges and starting at the third or further vertex is unaffected
when going from $E_2$ to $E_4$.

We can continue this $E_2$-$E_3$-$E_4$ procedure until we obtain a graph $F_k$
whose all edges emitted from first $k$ vertices end in the consecutive vertex 
and with the number of edges satisfying the inequalities $a_1\leq a_2\leq\ldots\leq a_k$:
\begin{center}
\begin{tikzpicture}[auto,swap]
\tikzstyle{vertex}=[circle,fill=black,minimum size=3pt,inner sep=0pt]
\tikzstyle{edge}=[draw,->]
\tikzstyle{thickedge}=[draw,very thick,->]
\tikzset{every loop/.style={min distance=20mm,in=130,out=50,looseness=50}}
   
   \node[vertex] (-3) at (-5,0) {};
   \node[vertex] (-2) at (-4,0) {};
   \node[vertex] (-1) at (-3,0) {};
   \node (7) at (-2.5,0) {\ldots};
   \node[vertex] (0) at (-2,0) {};
    \node[vertex] (1) at (-1,0) {};
   \node[vertex] (2) at (0,0) {};
    \node[vertex] (3) at (1,0) {};
    \node (6) at (1.5,0) {\ldots};
    \node[vertex] (4) at (2,0) {};
    \node[vertex] (5) at (3,0) {};
   
   \path (-3) edge[thickedge] node {$(a_1)$} (-2);
   \path (-2) edge[thickedge] node {$(a_2)$} (-1);
   \path (1) edge[edge] node {$e_{k+1}$} (2);
   \path (1) edge[edge, bend left=40] (2);
   \path (1) edge[edge, bend left=60] (3);
   \path (0) edge[thickedge] node {$(a_{k})$} (1);
   \path (2) edge[edge] node {} (3);
   \path (2) edge[edge, bend left] (3);
   \path (4) edge[edge] node {$e_l$} (5);
\end{tikzpicture}
\end{center}
Indeed, take $m<k$ and apply the $E_2$-$E_3$-$E_4$ procedure to the graph $F_m$
defined as $F_k$ but with $k$ replaced by $m$. Assume that $a_j\leq a_{m+1}\leq a_{j+1}$\, for some $j$.
Then we move the beginning of any edge starting at the $(m+1)$ vertex and
ending at the $(m+3)$ vertex or further to the $(m+2)$ vertex. Next, we implement the
swap of edges:
\[
(m+1)\mapsto (j+1),\quad (j+1)\mapsto (j+2),\quad \ldots\quad ,\quad m\mapsto (m+1),
\]
and obtain:
\begin{center}
\begin{tikzpicture}[auto,swap]
\tikzstyle{vertex}=[circle,fill=black,minimum size=3pt,inner sep=0pt]
\tikzstyle{edge}=[draw,very thick,->]
\tikzstyle{thinedge}=[draw,->]
\tikzset{every loop/.style={min distance=20mm,in=130,out=50,looseness=50}}
   
   \node[vertex, above, label=$1$] (0) at (0,0) {};
   \node[vertex, above, label=$2$] (1) at (1,0) {};
   \node[vertex, above, label=$3$] (2) at (2,0) {};
   \node (25) at (2.5,0) {\ldots};
   \node[vertex, above, label=$j$] (3) at (3,0) {};
   \node[vertex, above, label=$j+1$] (4) at (4,0) {};
   \node[vertex, above, label=$j+2$] (5) at (5.5,0) {};
   \node[vertex, above] (6) at (6.5,0) {};
   \node (65) at (7,0) {\ldots};
   \node[vertex,above,label=$m+1$] (7) at (7.5,0) {};
   \node[vertex,above] (8) at (8.5,0) {};
   \node[vertex,above] (9) at (9.5,0) {};
   \node[vertex,above] (10) at (10.5,0) {};
   \node (105) at (11,0) {\ldots};
   \node[vertex,above] (11) at (11.5,0) {};
   \node[vertex,above] (12) at (12.5,0) {};
   
   \path (0) edge[edge,below] node {$(a_1)$} (1);
   \path (1) edge[edge,below] node {$(a_2)$} (2); 
   \path (3) edge[edge,below] node {$(a_j)$} (4);
   \path (4) edge[edge,below] node {$(a_{m+1})$} (5);
   \path (5) edge[edge,below] node {$(a_{j+1})$} (6);
   \path (7) edge[edge,below] node {$(a_{m})$} (8);
   \path (8) edge[thinedge,below] node{$e_{m+1}$} (9);
   \path (8) edge[thinedge,below,bend left] node {} (10);
   \path (9) edge[thinedge] node {} (10);
   \path (11) edge[thinedge,below] node {$e_l$} (12);

\end{tikzpicture}
\end{center}
If the shifted edges were the first edges of $x_1$ paths of length $k-1$,
$x_2$ of length $k-2$, \ldots, and $x_{m+1}$ of length $k-(m+1)$, respectively, then by shifting the edges
we lost
\[
L:=a_mx_1+a_{m-1}a_mx_2+\ldots+a_1\ldots a_mx_m
\]
paths of length $k$, but, due to the re-ordering procedure, we gained
\begin{align*}
G:&=a_mx_1+a_{m-1}a_mx_2+\ldots+a_{j+1}\ldots a_mx_{m-j}+a_{m+1}a_{j+1}\ldots a_mx_{m-j+1}\\
&+\ldots a_2\ldots a_{m+1}a_{j+1}\ldots a_mx_m+a_1a_2\ldots a_{m+1}a_{j+1}\ldots a_mx_{m+1}.
\end{align*}
The first $m-j$ terms of $L$ are the same as in $G$, the next $j$ terms of $L$ are the same as in $G$,
the next $j$ terms in $L$ are no bigger then they are in $G$, and the last term in $G$ does not appear in $L$.
Thus, applying the $E_2$-$E_3$-$E_4$ procedure, we did not decrease the amount of paths of length $k$ involving the shifted edges.

Concerning the paths of length $k$ not involving the shifted edges, the re-arrangement procedure does not change the amount of paths starting at the first vertex, does
not decrease the amount of paths starting at the vertices $2$, \ldots, $m$, and does not change the amount of paths starting at $m+1$ or further.

We cannot apply the $E_2$-$E_3$-$E_4$ procedure any further because,
if $a_1>a_{k+1}$, swaping the edges starting at the first vertex with the edges
starting at the $(k+1)$ vertex will decrease the amount of paths of length $k$
beginning at the first vertex:
\[
a_1a_2\ldots a_k>a_{k+1}a_2\ldots a_k\,.
\]

Now we need to make a move decreasing the number of vertices. We identify the first vertex
with the $(k+1)$ vertex and shift all edges from the first vertex to the second vertex to 
become edges from the $(k+1)$ to $(k+2)$ vertex:
\begin{center}
\begin{tikzpicture}[auto,swap]
\tikzstyle{vertex}=[circle,fill=black,minimum size=3pt,inner sep=0pt]
\tikzstyle{edge}=[draw,very thick,->]
\tikzstyle{thinedge}=[draw,->]
\tikzset{every loop/.style={min distance=20mm,in=130,out=50,looseness=50}}
   
   \node[vertex, above] (1) at (1,0) {};
   \node[vertex, above] (2) at (2,0) {};
   \node (25) at (2.5,0) {\ldots};
   \node[vertex,above] (7) at (3,0) {};
   \node[vertex,above] (8) at (4,0) {};
   \node[vertex,above] (9) at (5,0) {};
   \node[vertex,above] (10) at (6,0) {};
   \node (105) at (6.5,0) {\ldots};
   \node[vertex,above] (11) at (7,0) {};
   \node[vertex,above] (12) at (8,0) {};
   
   \path (1) edge[edge,below] node {$(a_2)$} (2); 
   \path (7) edge[edge,below] node {$(a_{k})$} (8);
   \path (8) edge[thinedge,below] node{} (9);
   \path (8) edge[thinedge,below,bend left] node {} (10);
   \path (9) edge[thinedge] node {} (10);
   \path (11) edge[thinedge,below] node {$e_l$} (12);
   \path (8) edge[edge,below,bend right] node {$(a_1)$} (9);

\end{tikzpicture}
\end{center}
We call the thus obtained new graph $G_l$.

Let $G$ be a graph with finitely
many totaly ordered (by paths) vertices,
and $N$ edges. Denote by $P^N_k(G,m)$ the number of paths in $G$ of length $k>1$
that start at the $m$ vertex. Furthermore, let $y_m$ denote the number of all paths in $F_k$
of length $1\leq m\leq k$ starting at the $(k+1)$ vertex.
Then there are
\[
\sum_{m=1}^{l-k+1}P^N_k(F_k,m)
\]
many paths of length $k$ in $F_k$ and
\[
\sum_{m=1}^{l-k}P^N_k(G_l,m)
\]
many paths of length $k$ in $G_l$.
For the first sum, we have
\[
P^N_k(F_k,1)=a_1\ldots a_k,\qquad P^N_k(F_k,2)=a_2\ldots a_ky_1,\qquad \ldots\quad,
\]
\[
P^N_k(F_k,k)=a_ky_{k-1},\qquad P^N_k(F_k,k+1)=y_k\,.
\]
For the second sum, we have
\[
P^N_k(G_l,1)=a_2\ldots a_k(a_1+y_1),\qquad P^N_k(G_l,2)\geq a_3\ldots a_ky_2=P^N_k(F_k,3),\qquad \ldots\quad,
\]
\[
P^N_k(G_l,k-1)\geq a_ky_{k-1}=P^N_k(F_k,k),\qquad P^N_k(G_l,k)\geq y_k=P^N_k(F_k,k+1).
\]
Consequently,
\begin{align*}
\sum_{m=1}^{l-k+1}P^N_k,(F_k,m)&=P^N_k(G_l,1)+\sum_{m=3}^{l-k+1}P^N_k(F_k,m)\\
&=P^N_k(G_l,1)+\sum_{m=3}^{k+1}P^N_k(F_k,m)+\sum_{m=k+2}^{l-k+1}P^N_k(F_k,m)\\
&=P^N_k(G_l,1)+\sum_{m=2}^kP^N_k(F_k,m+1)+\sum_{m=k+2}^{l-k+1}P^N_k(G_l,m-1)\\
&=P^N_k(G_l,1)+\sum_{m=2}P^N_k(F_k,m+1)+\sum_{m=k+1}^{l-k}P^N_k(G_l,m)\\
&\leq P^N_k(G_l,1)+\sum_{m=2}^kP^N_k(G_l,m)+\sum_{m=k+1}^{l-k}P^N_k(G_l,m)\\
&=\sum_{m=1}^{l-k}P^N_k(G_l,m).
\end{align*}
Thus there are at least as many paths of length $k$ in $G_l$ as there are in $F_k$.
If $l=k+1$, we have the desired thick path:
\begin{center}
\begin{tikzpicture}[auto,swap]
\tikzstyle{vertex}=[circle,fill=black,minimum size=3pt,inner sep=0pt]
\tikzstyle{edge}=[draw,very thick,->]
\tikzstyle{thinedge}=[draw,->]
\tikzset{every loop/.style={min distance=20mm,in=130,out=50,looseness=50}}
   
   \node[vertex, above] (0) at (0,0) {};
   \node[vertex, above] (1) at (1,0) {};
   \node[vertex, above] (2) at (2,0) {};
   \node (25) at (2.5,0) {\ldots};
   \node[vertex,above] (7) at (3,0) {};
   \node[vertex,above] (8) at (4,0) {};
   \node[vertex,above] (9) at (6,0) {};
   
   \path (0) edge[edge,below] node {$(a_2)$} (1);
   \path (1) edge[edge,below] node {$(a_3)$} (2); 
   \path (7) edge[edge,below] node {$(a_{k})$} (8);
   \path (8) edge[edge,below] node {$(a_1+y_1)$} (9);

\end{tikzpicture}
\end{center}
Otherwise we repeat the $E_2$-$E_3$-$F_k$-$G_l$ procedure
decreasing the amount of vertices by one but not decreasing
the amount of paths of legth $k$.

All this shows that we can always transform our graph into a graph with totally ordered $(k+1)$ vertices
that are on a path of length $k$ without changing
the amount $N$ of all edges and without decreasing the number of paths of length $k$. 
In such a graph, if there are still edges that begin and end not in consecutive vertices,
they do not contribute to paths of length $k$, so we can re-attach them so that
they begin and end in consecutive vertices.

Now, the final step is to prove that given a thick path
c
with differences between numbers of edges bigger than one, we can evenly re-distribute the edges increasing the number of paths of length $k$ to the bound $P_N(k)$.

If there are any two indices $i\neq j$ such that $b_i-b_j>1$, then we define
\[
b'_n:=\begin{cases}b_n & \text{for $n\neq i, j$}\\b_n-1 & \text{for $n=i$}\\b_n+1 & \text{for $n=j$}\end{cases}
\]
and compute
\begin{align*}
\prod_{n=1}^kb'_n&=\left(\prod_{n\in\{1,\ldots,k\}\setminus\{i,j\}}b_n\right)(b_i-1)(b_j+1)\\
&=\left(\prod_{n\in\{1,\ldots,k\}\setminus\{i,j\}}b_n\right)(b_ib_j+b_i-b_j-1)\\
&=\prod_{n=1}^k b_n+\left(\prod_{1,\ldots,k\setminus\{i,j\}}b_n\right)(b_i-b_j-1)>\prod_{n=1}^kb_n\,.
\end{align*}
We can repeat this procedure until there is no pair of indices $i\neq j$ with the property $b_i-b_j-1$.
Thus we arrive at a graph with $s$ pairs of consecutive vertices joined by $(b+1)$ and $(k-s)$ pairs
joined by $b$ edges. Hence
\[
s(b+1)+(k-s)b=s+kb=N.
\]
Therefore, if $0\leq s\leq k$, then $s=r$ and $b=n$. If $s=k$, $r=0$ and $n=b+1$.
The number of all paths of length $k$ is
\[
(b+1)^sb^{k-s}=(n+1)^rn^{k-r}=P^N_k.
\]
\end{proof}

\noindent
\underline{An example:}

We take a graph $E$ with $N=16$ edges, and ask about the number of all $3$-paths.

\begin{center}
\begin{tikzpicture}[auto,swap]
\tikzstyle{vertex}=[circle,fill=black,minimum size=3pt,inner sep=0pt]
\tikzstyle{edge}=[draw,very thick,->]
\tikzstyle{thinedge}=[draw,->]
\tikzset{every loop/.style={min distance=20mm,in=130,out=50,looseness=50}}
   
\node[vertex] (1) at (0,0) {};
\node[vertex] (2) at (1,0) {};
\node[vertex] (3) at (2,0) {};
\node[vertex] (4) at (3,0) {};
\node[vertex] (5) at (1,1) {};
\node[vertex] (6) at (0,-1) {};
\node[vertex] (7) at (1,-1) {};
   
   \path (1) edge[thinedge,bend left] node{} (4);
   \path (1) edge[thinedge] node{} (5);
   \path (6) edge[thinedge] node{} (2);
   \path (1) edge[thinedge] node{} (2);
   \path (1) edge[thinedge, bend right] node{} (2);
   \path (2) edge[thinedge] node{} (3);
   \path (2) edge[thinedge, bend left] node{} (3);
   \path (3) edge[thinedge] node{} (4);
   \path (7) edge[thinedge] node{} (4);
   
\end{tikzpicture}
\qquad\begin{tikzpicture}[auto,swap]
\tikzstyle{vertex}=[circle,fill=black,minimum size=3pt,inner sep=0pt]
\tikzstyle{edge}=[draw,very thick,->]
\tikzstyle{thinedge}=[draw,->]
\tikzset{every loop/.style={min distance=20mm,in=130,out=50,looseness=50}}
   
\node[vertex] (1) at (0,0) {};
\node[vertex] (2) at (1,0) {};
\node[vertex] (3) at (2,0) {};
\node[vertex] (4) at (3,0) {};
\node[vertex] (5) at (4,0) {};
\node (6) at (0,-1) {};
\node (7) at (0,1) {};
   
   \path (1) edge[thinedge,bend left] node{} (5);
   \path (1) edge[thinedge] node{} (2);
   \path (3) edge[thinedge] node{} (4);
   \path (3) edge[thinedge, bend right=50] node{} (5);
   \path (3) edge[thinedge, bend right] node{} (5);
   \path (4) edge[thinedge] node{} (5);
   \path (4) edge[thinedge,bend left] node{} (5);
   
\end{tikzpicture}
\quad
 $P^{16}_3(E)=6$, $l=3$,
\end{center}

\begin{center}
\begin{tikzpicture}[auto,swap]
\tikzstyle{vertex}=[circle,fill=black,minimum size=3pt,inner sep=0pt]
\tikzstyle{edge}=[draw,very thick,->]
\tikzstyle{thinedge}=[draw,->]
\tikzset{every loop/.style={min distance=20mm,in=130,out=50,looseness=50}}
   
\node[vertex] (1) at (0,0) {};
\node[vertex] (2) at (1,0) {};
\node[vertex] (3) at (2,0) {};
\node[vertex] (4) at (3,0) {};
\node[vertex] (5) at (4,0) {};
\node[vertex] (6) at (5,0) {};
\node[vertex] (7) at (6,0) {};
   
   \path (1) edge[thinedge,bend left] node{} (4);
   \path (1) edge[thinedge,bend left] node{} (2);
  \path (1) edge[thinedge] node{} (2);
  \path (1) edge[thinedge,bend right] node{} (2);
  \path (1) edge[thinedge,bend right=50] node{} (2);
   \path (2) edge[thinedge,bend left] node{} (3);
    \path (2) edge[thinedge] node{} (3);
    \path (2) edge[thinedge,bend right] node{} (4);
     \path (3) edge[thinedge] node{} (4);
        \path (4) edge[thinedge] node{} (5);
           \path (5) edge[thinedge] node{} (6);
              \path (6) edge[thinedge] node{} (7);
              \path (4) edge[thinedge,bend left] node{} (7);
              \path (5) edge[thinedge,bend right] node{} (7);
              \path (5) edge[thinedge,bend right=50] node{} (7);
              \path (6) edge[thinedge, bend left] node{} (7);
   
\end{tikzpicture}
\quad
 $P^{16}_3(E_1)=31$, $l=6$,
\end{center}

\begin{center}
\begin{tikzpicture}[auto,swap]
\tikzstyle{vertex}=[circle,fill=black,minimum size=3pt,inner sep=0pt]
\tikzstyle{edge}=[draw,very thick,->]
\tikzstyle{thinedge}=[draw,->]
\tikzset{every loop/.style={min distance=20mm,in=130,out=50,looseness=50}}
   
\node[vertex] (1) at (0,0) {};
\node[vertex] (2) at (1,0) {};
\node[vertex] (3) at (2,0) {};
\node[vertex] (4) at (3,0) {};
\node[vertex] (5) at (4,0) {};
\node[vertex] (6) at (5,0) {};
\node[vertex] (7) at (6,0) {};
   
   \path (1) edge[edge,above] node{$(4)$} (2);
    \path (2) edge[edge, above] node{$(2)$} (3);
    \path (2) edge[thinedge,bend right] node{} (4);
     \path (3) edge[edge, above] node{$(2)$} (4);
        \path (4) edge[thinedge] node{} (5);
           \path (5) edge[thinedge] node{} (6);
              \path (6) edge[edge,above] node{$(2)$} (7);
              \path (4) edge[thinedge,bend right=60] node{} (7);
              \path (5) edge[thinedge,bend right] node{} (7);
              \path (5) edge[thinedge,bend right=50] node{} (7);
   
\end{tikzpicture}
\quad
 $P^{16}_3(E_2)=43$, $l=6$,
\end{center}

\begin{center}
\begin{tikzpicture}[auto,swap]
\tikzstyle{vertex}=[circle,fill=black,minimum size=3pt,inner sep=0pt]
\tikzstyle{edge}=[draw,very thick,->]
\tikzstyle{thinedge}=[draw,->]
\tikzset{every loop/.style={min distance=20mm,in=130,out=50,looseness=50}}
   
\node[vertex] (1) at (0,0) {};
\node[vertex] (2) at (1,0) {};
\node[vertex] (3) at (2,0) {};
\node[vertex] (4) at (3,0) {};
\node[vertex] (5) at (4,0) {};
\node[vertex] (6) at (5,0) {};
\node[vertex] (7) at (6,0) {};
   
   \path (1) edge[edge,above] node{$(4)$} (2);
    \path (2) edge[edge, above] node{$(2)$} (3);
     \path (3) edge[edge, above] node{$(3)$} (4);
        \path (4) edge[thinedge] node{} (5);
           \path (5) edge[thinedge] node{} (6);
               \path (6) edge[edge,above] node{$(2)$} (7);
              \path (4) edge[thinedge,bend right=60] node{} (7);
              \path (5) edge[thinedge,bend right] node{} (7);
              \path (5) edge[thinedge,bend right=50] node{} (7);
   
\end{tikzpicture}
\quad
 $P^{16}_3(E_3)=47$, $l=6$,
\end{center}

\begin{center}
\begin{tikzpicture}[auto,swap]
\tikzstyle{vertex}=[circle,fill=black,minimum size=3pt,inner sep=0pt]
\tikzstyle{edge}=[draw,very thick,->]
\tikzstyle{thinedge}=[draw,->]
\tikzset{every loop/.style={min distance=20mm,in=130,out=50,looseness=50}}
   
\node[vertex] (1) at (0,0) {};
\node[vertex] (2) at (1,0) {};
\node[vertex] (3) at (2,0) {};
\node[vertex] (4) at (3,0) {};
\node[vertex] (5) at (4,0) {};
\node[vertex] (6) at (5,0) {};
\node[vertex] (7) at (6,0) {};
   
   \path (1) edge[edge,above] node{$(2)$} (2);
    \path (2) edge[edge, above] node{$(4)$} (3);
     \path (3) edge[edge, above] node{$(3)$} (4);
        \path (4) edge[thinedge] node{} (5);
           \path (5) edge[thinedge] node{} (6);
               \path (6) edge[edge,above] node{$(2)$} (7);
              \path (4) edge[thinedge,bend right=60] node{} (7);
              \path (5) edge[thinedge,bend right] node{} (7);
              \path (5) edge[thinedge,bend right=50] node{} (7);   
              
\end{tikzpicture}
\quad
 $P^{16}_3(E_4)=59$, $l=6$.
\end{center}

Now, we repeat the $E_2$-$E_3$-$E_4$ procedure to obtain $F_3$:

\begin{center}
\begin{tikzpicture}[auto,swap]
\tikzstyle{vertex}=[circle,fill=black,minimum size=3pt,inner sep=0pt]
\tikzstyle{edge}=[draw,very thick,->]
\tikzstyle{thinedge}=[draw,->]
\tikzset{every loop/.style={min distance=20mm,in=130,out=50,looseness=50}}
   
\node[vertex] (1) at (0,0) {};
\node[vertex] (2) at (1,0) {};
\node[vertex] (3) at (2,0) {};
\node[vertex] (4) at (3,0) {};
\node[vertex] (5) at (4,0) {};
\node[vertex] (6) at (5,0) {};
\node[vertex] (7) at (6,0) {};
   
   \path (1) edge[edge,above] node{$(2)$} (2);
    \path (2) edge[edge, above] node{$(3)$} (3);
     \path (3) edge[edge, above] node{$(4)$} (4);
        \path (4) edge[thinedge] node{} (5);
           \path (5) edge[thinedge] node{} (6);
               \path (6) edge[edge,above] node{$(2)$} (7);
              \path (4) edge[thinedge,bend right=60] node{} (7);
              \path (5) edge[thinedge,bend right] node{} (7);
              \path (5) edge[thinedge,bend right=50] node{} (7);   
              
\end{tikzpicture}
\quad
 $P^{16}_3(F_3)=62$, $l=6$,
\end{center}

\begin{center}
\begin{tikzpicture}[auto,swap]
\tikzstyle{vertex}=[circle,fill=black,minimum size=3pt,inner sep=0pt]
\tikzstyle{edge}=[draw,very thick,->]
\tikzstyle{thinedge}=[draw,->]
\tikzset{every loop/.style={min distance=20mm,in=130,out=50,looseness=50}}
   
\node[vertex] (2) at (1,0) {};
\node[vertex] (3) at (2,0) {};
\node[vertex] (4) at (3,0) {};
\node[vertex] (5) at (4,0) {};
\node[vertex] (6) at (5,0) {};
\node[vertex] (7) at (6,0) {};
   
    \path (2) edge[edge, above] node{$(3)$} (3);
     \path (3) edge[edge, above] node{$(4)$} (4);
        \path (4) edge[edge,above] node{$(3)$} (5);
           \path (5) edge[thinedge] node{} (6);
              \path (6) edge[edge,above] node{$(2)$} (7);
              \path (4) edge[thinedge,bend right=60] node{} (7);
              \path (5) edge[thinedge,bend right] node{} (7);
              \path (5) edge[thinedge,bend right=50] node{} (7);
   
\end{tikzpicture}
\quad
 $P^{16}_3(G_5)=90$, $l=5$.
\end{center}

Applying again the $E_2$-$E_3$-$E_4$ procedure, yields:

\begin{center}
\begin{tikzpicture}[auto,swap]
\tikzstyle{vertex}=[circle,fill=black,minimum size=3pt,inner sep=0pt]
\tikzstyle{edge}=[draw,very thick,->]
\tikzstyle{thinedge}=[draw,->]
\tikzset{every loop/.style={min distance=20mm,in=130,out=50,looseness=50}}
   
\node[vertex] (2) at (1,0) {};
\node[vertex] (3) at (2,0) {};
\node[vertex] (4) at (3,0) {};
\node[vertex] (5) at (4,0) {};
\node[vertex] (6) at (5,0) {};
\node[vertex] (7) at (6,0) {};
   
    \path (2) edge[edge, above] node{$(3)$} (3);
     \path (3) edge[edge, above] node{$(3)$} (4);
        \path (4) edge[edge,above] node{$(4)$} (5);
           \path (5) edge[thinedge] node{} (6);
              \path (6) edge[edge,above] node{$(2)$} (7);
              \path (5) edge[thinedge,bend right=70] node{} (7);
              \path (5) edge[thinedge,bend right] node{} (7);
              \path (5) edge[thinedge,bend right=50] node{} (7);
   
\end{tikzpicture}
\end{center}

Next, repeating the $F$-$G$-procedure, we obtain $G_4$:

\begin{center}
\begin{tikzpicture}[auto,swap]
\tikzstyle{vertex}=[circle,fill=black,minimum size=3pt,inner sep=0pt]
\tikzstyle{edge}=[draw,very thick,->]
\tikzstyle{thinedge}=[draw,->]
\tikzset{every loop/.style={min distance=20mm,in=130,out=50,looseness=50}}
   
\node[vertex] (3) at (2,0) {};
\node[vertex] (4) at (3,0) {};
\node[vertex] (5) at (4,0) {};
\node[vertex] (6) at (5,0) {};
\node[vertex] (7) at (6,0) {};
   
     \path (3) edge[edge, above] node{$(3)$} (4);
        \path (4) edge[edge,above] node{$(4)$} (5);
           \path (5) edge[edge,above] node{$(4)$} (6);
              \path (6) edge[edge, above] node{$(2)$} (7);
              \path (5) edge[thinedge,bend right=70] node{} (7);
              \path (5) edge[thinedge,bend right] node{} (7);
              \path (5) edge[thinedge,bend right=50] node{} (7);

\end{tikzpicture}
\quad
 $P^{16}_3(G_4)=116$, $l=4$.
\end{center}

We still need to apply the $E_2$-$E_3$-$E_4$-$F$-$G$ procedure to obtain $G_3$:

\begin{center}
\begin{tikzpicture}[auto,swap]
\tikzstyle{vertex}=[circle,fill=black,minimum size=3pt,inner sep=0pt]
\tikzstyle{edge}=[draw,very thick,->]
\tikzstyle{thinedge}=[draw,->]
\tikzset{every loop/.style={min distance=20mm,in=130,out=50,looseness=50}}
   
\node[vertex] (4) at (3,0) {};
\node[vertex] (5) at (4,0) {};
\node[vertex] (6) at (5,0) {};
\node[vertex] (7) at (6,0) {};
   
        \path (4) edge[edge,above] node{$(4)$} (5);
           \path (5) edge[edge,above] node{$(4)$} (6);
              \path (6) edge[edge, above] node{$(8)$} (7);

\end{tikzpicture}
\quad
 $P^{16}_3(G_3)=128$, $l=3$.
\end{center}

The final equal-distribution procedure provides us with an optimal graph $M$
maximizing the number of $k$-paths and reaching the bound:

\begin{center}
\begin{tikzpicture}[auto,swap]
\tikzstyle{vertex}=[circle,fill=black,minimum size=3pt,inner sep=0pt]
\tikzstyle{edge}=[draw,very thick,->]
\tikzstyle{thinedge}=[draw,->]
\tikzset{every loop/.style={min distance=20mm,in=130,out=50,looseness=50}}
   
\node[vertex] (4) at (3,0) {};
\node[vertex] (5) at (4,0) {};
\node[vertex] (6) at (5,0) {};
\node[vertex] (7) at (6,0) {};
   
        \path (4) edge[edge,above] node{$(5)$} (5);
           \path (5) edge[edge,above] node{$(5)$} (6);
              \path (6) edge[edge, above] node{$(6)$} (7);

\end{tikzpicture}
\quad
 $P^{16}_3(M)=150$, $l=3$.
\end{center}

It agrees with the theorem: $N=nk+r$, $16=5\cdot 3+1$,
\[
P^{16}_3=(5+1)^15^{3-1}=6\cdot 5\cdot 5=150.
\]

\subsection{Adjacency matrices}

\begin{definition}
Let $E$ be a finite graph. The \underline{adjaceny matrix} $A(E)$ of the graph $E$
is the square matrix whose entries are labelled by the pairs of vertices and each $(v,w)$-entry
equals the number of edges that start at $v$ and end at $w$.
\end{definition}

\noindent\underline{Examples:}
\begin{enumerate}
\item Consider graph $E$:
\[
\begin{tikzpicture}[auto,swap]
\tikzstyle{vertex}=[circle,fill=black,minimum size=3pt,inner sep=0pt]
\tikzstyle{edge}=[draw,->]
\tikzstyle{cycle1}=[draw,->,out=130, in=50, loop, distance=40pt]
\tikzstyle{cycle2}=[draw,->,out=135, in=45, loop, distance=65pt]
   
\node[vertex,label=below:$1$] (0) at (0,-1) {};
\node[vertex,label=below:$2$] (1) at (2,-1) {};

\path (0) edge[cycle1] node {} (0);
\path (0) edge[edge] node {} (1);
\path (0) edge[cycle2] node {} (0);

\end{tikzpicture}
\]
Then,
\[
A(E)=\begin{bmatrix}
2 & 1 \\ 0 & 0
\end{bmatrix}.
\]
\item Consider graph $E$:
\[
\begin{tikzpicture}[auto,swap]
\tikzstyle{vertex}=[circle,fill=black,minimum size=3pt,inner sep=0pt]
\tikzstyle{edge}=[draw,->]
\tikzstyle{cycle1}=[draw,->,out=130, in=50, loop, distance=40pt]
\tikzstyle{cycle2}=[draw,->,out=135, in=45, loop, distance=65pt]
   
\node[vertex,label=below:$1$] (0) at (0,-1) {};
\node[vertex,label=below:$2$] (1) at (2,-1) {};

\path (0) edge[edge] node {} (1);
\path (0) edge[edge, bend right] node {} (1);
\path (0) edge[edge, bend left] node {} (1);

\end{tikzpicture}
\]
Then,
\[
A(E)=\begin{bmatrix}
0 & 3 \\ 0 & 0
\end{bmatrix}.
\]
\item Consider graph $E$:
\[
\begin{tikzpicture}[auto,swap]
\tikzstyle{vertex}=[circle,fill=black,minimum size=3pt,inner sep=0pt]
\tikzstyle{edge}=[draw,->]
\tikzstyle{cycle1}=[draw,->,out=130, in=50, loop, distance=40pt]
\tikzstyle{cycle2}=[draw,->,out=135, in=45, loop, distance=65pt]
   
\node[vertex,label=below:$1$] (0) at (0,-1) {};
\node[vertex,label=below:$2$] (1) at (2,-1) {};

\path (0) edge[cycle1] node {} (0);
\path (0) edge[cycle2] node {} (0);
\path (1) edge[cycle1] node {} (1);

\end{tikzpicture}
\]
Then,
\[
A(E)=\begin{bmatrix}
2 & 0 \\ 0 & 1
\end{bmatrix}.
\]
\item Consider graph $E$:
\[
\begin{tikzpicture}[auto,swap]
\tikzstyle{vertex}=[circle,fill=black,minimum size=3pt,inner sep=0pt]
\tikzstyle{edge}=[draw,->]
\tikzstyle{cycle1}=[draw,->,out=130, in=50, loop, distance=40pt]
\tikzstyle{cycle2}=[draw,->,out=135, in=45, loop, distance=65pt]
   
\node[vertex,label=below:$1$] (0) at (0,-1) {};
\node[vertex,label=below:$2$] (1) at (2,-1) {};
\node[vertex,label=below:$3$] (2) at (4,-1) {};

\path (0) edge[edge] node {} (1);
\path (0) edge[edge, bend right] node {} (1);
\path (0) edge[edge, bend left] node {} (1);
\path (1) edge[edge, bend right] node {} (2);
\path (1) edge[edge, bend left] node {} (2);

\end{tikzpicture}
\]
Then,
\[
A(E)=\begin{bmatrix}
0 & 3 & 0 \\ 0 & 0 & 2\\ 0 & 0 & 0
\end{bmatrix}.
\]
\end{enumerate}

In a finite graph $E$ with $N$ edges, consider all paths of length $k$ starting at a vertex $v$ and ending at a vertex $w$.
If $k_1+k_2=k$ and $k_1,k_2\in\mathbb{N}\setminus\{0\}$, then each path $p$ with $s(p)=v$ and $t(p)=w$ decomposes
into a path $p_1$ of length $k_1$ with $s(p_1)=v$, $t(p_1)=u$, and a path $p_2$ of length $k_2$ with $s(p_2)=u$, $t(p_2)=w$.
Hence the number $N_k(v,w)$ of all $k$-paths from $v$ to $w$ equals
\[
N_k(v,w)=\sum_{u\in E^0}N_{k_1}(v,u)N_{k_2}(u,w).
\]
Thus we have shown:
\begin{proposition}\label{adjmat}
Let $E$ be a finite graph, and let $A_l(E)$ be a generalized adjacency matrix whose entries count the number
of all $l$-paths between vertices. Then, $\forall\; k_1,k_2\in\mathbb{N}\setminus\{0\}$:
\[
A_{k_1+k_2}(E)=A_{k_1}(E)A_{k_2}(E).
\]
\end{proposition}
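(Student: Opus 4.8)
The plan is to prove the matrix identity entry by entry, recognizing the counting identity established just above as nothing but the rule for matrix multiplication. By definition of the generalized adjacency matrix, the $(v,w)$-entry of $A_l(E)$ is the number $N_l(v,w)$ of $l$-paths from $v$ to $w$; for $l=1$ this is the ordinary adjacency matrix $A(E)$ itself.

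First I would expand the $(v,w)$-entry of the product using the definition of matrix multiplication, which is legitimate since $E^0$ is finite:
\[
\bigl(A_{k_1}(E)A_{k_2}(E)\bigr)_{v,w}=\sum_{u\in E^0}\bigl(A_{k_1}(E)\bigr)_{v,u}\bigl(A_{k_2}(E)\bigr)_{u,w}=\sum_{u\in E^0}N_{k_1}(v,u)\,N_{k_2}(u,w).
\]
By the displayed counting identity preceding the proposition, the right-hand side equals $N_{k_1+k_2}(v,w)$, that is, the $(v,w)$-entry of $A_{k_1+k_2}(E)$. As the two matrices agree in every entry, they coincide, and since $v,w$ and the splitting $k_1+k_2$ were arbitrary, this proves the claim for all $k_1,k_2\geq 1$.

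The substance of the argument is therefore entirely contained in that counting identity, whose proof is the bijection $p\mapsto(p_1,p_2)$ sending a $k$-path from $v$ to $w$ to the pair consisting of its initial $k_1$-subpath and its terminal $k_2$-subpath, which meet at the intermediate vertex $u:=t(p_1)=s(p_2)$. Injectivity is immediate because $p$ is reconstructed by concatenation; surjectivity holds because any $k_1$-path ending at $u$ and any $k_2$-path starting at $u$ glue to a valid path, the source/target matching at the junction being exactly the condition in the definition of a path. Grouping $k$-paths by this intermediate vertex $u$ then yields the sum over $E^0$. I expect this bijection to be the only real obstacle, and it is precisely the step carried out in the paragraph preceding the statement; the proposition itself is just its reformulation as $A_{k_1+k_2}(E)=A_{k_1}(E)A_{k_2}(E)$.
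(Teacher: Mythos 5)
Your proposal is correct and matches the paper's own argument: the paper establishes exactly the counting identity $N_{k_1+k_2}(v,w)=\sum_{u\in E^0}N_{k_1}(v,u)N_{k_2}(u,w)$ in the paragraph preceding the proposition, by the same decomposition of a path into its initial $k_1$-subpath and terminal $k_2$-subpath, and then states the proposition as its matrix reformulation. Your entry-by-entry identification of this identity with the definition of matrix multiplication is precisely the intended proof.
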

\begin{corollary}
$\forall\;n\in\mathbb{N}\setminus\{0\}:A_n(E)=A(E)^n$.
\end{corollary}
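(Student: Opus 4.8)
The plan is to prove this by induction on $n$, using Proposition~\ref{adjmat} as the engine for the inductive step. The essential observation that makes everything go is that the ordinary adjacency matrix is already the first generalized adjacency matrix: by definition, the $(v,w)$-entry of $A(E)$ counts the edges from $v$ to $w$, and an edge is precisely a path of length one, so $A_1(E)=A(E)$. This identifies the base case with the $n=1$ instance of the claim, since $A(E)^1=A(E)=A_1(E)$.

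For the inductive step, I would assume $A_n(E)=A(E)^n$ for some fixed $n\in\mathbb{N}\setminus\{0\}$ and compute $A_{n+1}(E)$. Writing $n+1=n+1$ as the decomposition $k_1=n$, $k_2=1$ (both of which lie in $\mathbb{N}\setminus\{0\}$, so the hypotheses of Proposition~\ref{adjmat} are met), I would invoke that proposition to obtain
\[
A_{n+1}(E)=A_n(E)\,A_1(E).
\]
Substituting the inductive hypothesis $A_n(E)=A(E)^n$ together with the base-case identity $A_1(E)=A(E)$ then yields
\[
A_{n+1}(E)=A(E)^n\,A(E)=A(E)^{n+1},
\]
which closes the induction and completes the proof.

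I do not expect any genuine obstacle here: the content has been entirely absorbed into Proposition~\ref{adjmat}, whose combinatorial proof already establishes the multiplicativity $A_{k_1+k_2}(E)=A_{k_1}(E)A_{k_2}(E)$ by summing over the intermediate vertex at which a $k$-path is cut. The only point that deserves an explicit word, rather than being silently assumed, is the identification $A_1(E)=A(E)$; once that is recorded, the remainder is a routine one-step induction. It is worth noting that Proposition~\ref{adjmat} is not needed in its full generality for this corollary---only the special case $k_2=1$ is used---but invoking the general statement keeps the argument transparent and requires no additional work.
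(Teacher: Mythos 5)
Your proof is correct and is essentially the paper's own argument, spelled out in more detail: the paper likewise notes the statement holds for $n=1$ and invokes Proposition~\ref{adjmat} with $k_1=n$, $k_2=1$ for the inductive step. Your explicit remark that $A_1(E)=A(E)$ (edges are exactly paths of length one) is a helpful clarification the paper leaves implicit.
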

\begin{proof}
The statement holds for $n=1$, and taking $k_1=n$ and $k_2=1$ in Proposition~\ref{adjmat}
proves the inductive step.
\end{proof}
\begin{corollary}
A finite graph $E$ has no loops if and only if its adjacency matrix is nilpotent.
\end{corollary}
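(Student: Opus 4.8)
The plan is to use the preceding corollary $A_n(E)=A(E)^n$ to translate the combinatorial condition ``no loops'' into the algebraic condition ``nilpotent''. The bridge is the observation that $A(E)^n=0$ holds precisely when there are no paths of length $n$ in $E$: each entry of $A_n(E)=A(E)^n$ is the nonnegative integer $N_n(v,w)$ counting $n$-paths from $v$ to $w$, so $A(E)^n$ vanishes if and only if $N_n(v,w)=0$ for all $v,w\in E^0$, i.e.\ if and only if $\mathrm{FP}_n(E)=\varnothing$.

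First I would prove the forward implication. Assuming $E$ has no loops, the theorem characterizing finiteness of $\mathrm{FP}(E)$ applies; more precisely, the bound established in its proof shows that the longest path in a loopless finite graph with $N$ edges has length at most $N$. Hence $\mathrm{FP}_{N+1}(E)=\varnothing$, and by the bridging observation $A(E)^{N+1}=0$, so $A(E)$ is nilpotent.

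For the converse I would argue contrapositively. Suppose $E$ contains a loop, i.e.\ a path $c$ of length $m\geq 1$ with $s(c)=t(c)=v$. Concatenating $c$ with itself $j$ times yields a path of length $jm$ from $v$ to $v$, so $N_{jm}(v,v)\geq 1$ and therefore $A(E)^{jm}\neq 0$ for every $j\in\mathbb{N}\setminus\{0\}$. If $A(E)$ were nilpotent we would have $A(E)^{n_0}=0$ for some $n_0$, forcing $A(E)^{n}=0$ for all $n\geq n_0$; choosing $j$ with $jm\geq n_0$ then contradicts $A(E)^{jm}\neq 0$. Thus $A(E)$ is not nilpotent.

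The only point requiring care is this last step: nilpotency asserts the vanishing of a single power, so to refute it one must produce nonvanishing powers for arbitrarily large exponents, not merely for infinitely many. The loop supplies exactly this, because the vanishing of one power of $A(E)$ forces the vanishing of all higher powers. Everything else is a direct application of $A_n(E)=A(E)^n$ together with the earlier length bound, so I expect no substantial obstacle beyond this bookkeeping.
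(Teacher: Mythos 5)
Your proof is correct and follows essentially the same route as the paper: both arguments rest on the bridge that the entries of $A(E)^n$ count paths of length $n$, combined with the earlier theorem's dichotomy (no loops bounds path lengths by the number of edges; a loop yields arbitrarily long paths). The only cosmetic difference is that you phrase the converse contrapositively (a loop forces $A(E)^{jm}\neq 0$ for all $j$), whereas the paper argues directly that nilpotency yields a longest path and hence no loops.
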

\begin{proof}
The finite graph $E$ has no loops $\iff$ $FP(E)$ is finite. The latter is equivalent to the existence of a longest path.
Indeed, if there is no longest path, then there are paths of all lengths, so $FP(E)$ is infinite. Vice versa, if $FP(E)$ is infinite,
then there is a loop, so there is no longest path.

Next, if the length of a longest path is $l$, then $A(E)^{l+1}=0$, so $A(E)$ is nilpotent.
Vice versa, if $A(E)$ is nilpotent, then there exists $n$ such that $A(E)^{n}=0$.
Hence there are no paths of length $\geq n$, so there exists a longest path.
\end{proof}
\begin{corollary}
The number of all $k$-paths is given by $FP_k(E)=\sum_{v,w\in E^0}\left(A(E)^k\right)_{vw}$.
\end{corollary}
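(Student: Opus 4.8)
The plan is to count paths of length $k$ by sorting them according to their two endpoints and then to recognize each class as a single matrix entry. First I would record the elementary but essential observation that every finite path $p$ carries a uniquely determined source $s(p)$ and target $t(p)$, so assigning to each $k$-path the ordered pair $(s(p),t(p))\in E^0\times E^0$ partitions the set $FP_k(E)$ into the disjoint subsets
\[
FP_k(E)=\bigsqcup_{v,w\in E^0}\{\,p\in FP_k(E)\;:\;s(p)=v,\ t(p)=w\,\}.
\]
Because $E$ is finite, both the index set $E^0\times E^0$ and each subset are finite, so the cardinalities add.

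Next I would identify the size of each block. By the very definition of the generalized adjacency matrix $A_k(E)$, its $(v,w)$-entry is $N_k(v,w)$, the number of $k$-paths from $v$ to $w$, which is exactly the cardinality of the block indexed by $(v,w)$. Summing over all pairs therefore gives
\[
FP_k(E)=\sum_{v,w\in E^0}\left(A_k(E)\right)_{vw}.
\]
Finally I would invoke the preceding corollary, which states $A_n(E)=A(E)^n$ for every $n\in\mathbb{N}\setminus\{0\}$; specializing to $n=k$ replaces $A_k(E)$ by $A(E)^k$ and yields the asserted formula.

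There is essentially no serious obstacle here, since the statement is a bookkeeping consequence of results already established. The only point deserving explicit mention is the disjointness of the decomposition: it is legitimate to sum the matrix entries without over- or under-counting precisely because $s(p)$ and $t(p)$ are intrinsic to the path $p$, so no $k$-path is counted in more than one block. The finiteness of $E$ is what makes both the partition and the matrix power $A(E)^k$ finite objects, so every sum in sight is a finite sum of nonnegative integers and the manipulations are unambiguous.
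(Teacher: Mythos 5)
Your proof is correct and follows exactly the route the paper intends: the paper states this corollary without proof because it is an immediate consequence of the definition of $N_k(v,w)$ as the $(v,w)$-entry of $A_k(E)$ together with the preceding corollary $A_n(E)=A(E)^n$, and your argument simply makes that bookkeeping explicit. Your added remark on the disjointness of the decomposition by endpoints is a reasonable point to spell out, but it does not change the substance.
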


\noindent\underline{Examples:}
\begin{enumerate}
\item Consider graph $E$:
\[
\begin{tikzpicture}[auto,swap]
\tikzstyle{vertex}=[circle,fill=black,minimum size=3pt,inner sep=0pt]
\tikzstyle{edge}=[draw,->]
\tikzstyle{cycle1}=[draw,->,out=130, in=50, loop, distance=40pt]
\tikzstyle{cycle2}=[draw,->,out=135, in=45, loop, distance=65pt]
   
\node[vertex,label=below:$1$] (0) at (0,-1) {};
\node[vertex,label=below:$2$] (1) at (2,-1) {};

\path (0) edge[cycle1] node {} (0);
\path (0) edge[edge] node {} (1);
\path (0) edge[cycle2] node {} (0);

\end{tikzpicture}
\]
Then,
\[
A(E)^k=\begin{bmatrix}
2^k & 2^{k-1} \\ 0 & 0
\end{bmatrix},
\]
so there are $2^k+2^{k-1}=3\cdot 2^{k-1}$ many $k$-paths in $E$.
\item Consider graph $E$:
\[
\begin{tikzpicture}[auto,swap]
\tikzstyle{vertex}=[circle,fill=black,minimum size=3pt,inner sep=0pt]
\tikzstyle{edge}=[draw,->]
\tikzstyle{cycle1}=[draw,->,out=130, in=50, loop, distance=40pt]
\tikzstyle{cycle2}=[draw,->,out=135, in=45, loop, distance=65pt]
   
\node[vertex,label=below:$1$] (0) at (0,-1) {};
\node[vertex,label=below:$2$] (1) at (2,-1) {};

\path (0) edge[edge] node {} (1);
\path (0) edge[edge, bend right] node {} (1);
\path (0) edge[edge, bend left] node {} (1);

\end{tikzpicture}
\]
Then,
\[
A(E)^2=\begin{bmatrix}
0 & 3 \\ 0 & 0
\end{bmatrix}^2=\begin{bmatrix}
0 & 0 \\ 0 & 0
\end{bmatrix},
\]
so there are no paths longer than $1$.
\item Consider graph $E$:
\[
\begin{tikzpicture}[auto,swap]
\tikzstyle{vertex}=[circle,fill=black,minimum size=3pt,inner sep=0pt]
\tikzstyle{edge}=[draw,->]
\tikzstyle{cycle1}=[draw,->,out=130, in=50, loop, distance=40pt]
\tikzstyle{cycle2}=[draw,->,out=135, in=45, loop, distance=65pt]
   
\node[vertex,label=below:$1$] (0) at (0,-1) {};
\node[vertex,label=below:$2$] (1) at (2,-1) {};

\path (0) edge[cycle1] node {} (0);
\path (0) edge[cycle2] node {} (0);
\path (1) edge[cycle1] node {} (1);

\end{tikzpicture}
\]
Then,
\[
A(E)^k=\begin{bmatrix}
2^k & 0 \\ 0 & 1
\end{bmatrix},
\]
so there are $2^k+1$ many $k$-paths in $E$.
\item Consider graph $E$:
\[
\begin{tikzpicture}[auto,swap]
\tikzstyle{vertex}=[circle,fill=black,minimum size=3pt,inner sep=0pt]
\tikzstyle{edge}=[draw,->]
\tikzstyle{cycle1}=[draw,->,out=130, in=50, loop, distance=40pt]
\tikzstyle{cycle2}=[draw,->,out=135, in=45, loop, distance=65pt]
   
\node[vertex,label=below:$1$] (0) at (0,-1) {};
\node[vertex,label=below:$2$] (1) at (2,-1) {};
\node[vertex,label=below:$3$] (2) at (4,-1) {};

\path (0) edge[edge] node {} (1);
\path (0) edge[edge, bend right] node {} (1);
\path (0) edge[edge, bend left] node {} (1);
\path (1) edge[edge, bend right] node {} (2);
\path (1) edge[edge, bend left] node {} (2);

\end{tikzpicture}
\]
Then,
\[
A(E)^2=\begin{bmatrix}
0 & 3 & 0 \\ 0 & 0 & 2\\ 0 & 0 & 0
\end{bmatrix}^2=\begin{bmatrix}
0 & 0 & 6 \\ 0 & 0 & 0\\ 0 & 0 & 0
\end{bmatrix},
\]
so there are $6$ paths of length $2$.
\end{enumerate}

Note that there finitely many graphs with $N$ edges and whose all vertices
emit or receive at least one edge. Indeed, for any such graph $E$,
$E^0=\{1,2,\ldots,m\}$, $m\leq 2N$, $E^1=\{1,2,\ldots,N\}$,
$s\subseteq E^1\times E^0$, $t\subseteq E^1\times E^0$,
so the number of all such graphs is limited by $2N\cdot 2^{Nm}\cdot 2^{Nm}=N\cdot 2^{2Nm+1}$.

\begin{corollary}
Let $\mathcal{E}_N$ denote the set of all graphs with $N$ edges, no loops, and whose
all vertices emit or receive at least one edge. Then, $\forall\; k\in\{1,\ldots, N\}$
\[
\max_{E\in\mathcal{E}_n}\left\{\sum_{v,w\in E^0}\left(A(E)^k\right)_{vw}\right\}=(n+1)^rn^{k-r},
\]
where $N=:nk+r$ with $r\in\{0,1,\ldots,k-1\}$.
\end{corollary}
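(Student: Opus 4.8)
The plan is to recognize that this final corollary merely repackages Theorem~1.10 in the language of adjacency matrices, so its entire mathematical content has already been secured; what remains is to connect the two formulations and to check that restricting attention to the finite family $\mathcal{E}_N$ does not alter the value of the maximum. I therefore expect no genuine obstacle, only two bookkeeping points to verify.

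First I would invoke the immediately preceding corollary, which identifies the scalar $\sum_{v,w\in E^0}(A(E)^k)_{vw}$ with the total number of paths of length $k$ in $E$, that is, with the cardinality of $\mathrm{FP}_k(E)$. Consequently the quantity being maximized is exactly the number of $k$-paths, and the problem reduces to maximizing that number over the graphs belonging to $\mathcal{E}_N$.

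Next I would argue that this maximum coincides with the maximum taken over all graphs with $N$ edges and no loops. On the one hand, every graph in $\mathcal{E}_N$ is such a graph, so the $\mathcal{E}_N$-maximum is bounded above by the latter. On the other hand, given any graph with $N$ edges and no loops, deleting the vertices that neither emit nor receive an edge yields a graph in $\mathcal{E}_N$ with exactly the same paths of length $k\geq 1$, since every vertex occurring in a path of positive length lies in $s(E^1)\cup t(E^1)$. In particular the optimal graph furnished by Theorem~1.10 may be chosen inside $\mathcal{E}_N$. Because $\mathcal{E}_N$ is finite, as observed just before the corollary, the maximum is genuinely attained rather than a mere supremum, so the symbol $\max$ is well posed.

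Finally I would read off the value directly from Theorem~1.10: writing $N=nk+r$ with $0\leq r\leq k-1$, the optimal number of $k$-paths equals $P^N_k=(n+1)^rn^{k-r}$. For $N\geq 2$ this is precisely the content of Theorem~1.10. The only case lying outside its hypotheses is $N=1$, where necessarily $k=1$, hence $r=0$ and $n=1$; a single non-loop edge then has exactly one $1$-path, matching $(n+1)^rn^{k-r}=1$. The main thing to be careful about is thus not any estimate but the two reductions above: that the optimizing graph can be taken in $\mathcal{E}_N$, and that $\mathcal{E}_N$ is finite so that the maximum exists.
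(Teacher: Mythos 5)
Your proposal is correct and follows exactly the route the paper intends: the corollary is stated without proof as an immediate repackaging of Theorem~1.10 via the preceding corollary $\mathrm{FP}_k(E)=\sum_{v,w\in E^0}\bigl(A(E)^k\bigr)_{vw}$, together with the finiteness observation made just before the statement. Your two bookkeeping points (that deleting isolated vertices preserves all paths of positive length, so the optimum is attained inside $\mathcal{E}_N$, and that the case $N=1$ falls outside the hypothesis $N\geq 2$ of Theorem~1.10 but checks directly) are exactly the details the paper leaves implicit, and both are handled correctly.
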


Next, observe that, if $A(E)\in M_n(\mathbb{N})$,
then it is the adjacency matrix of the graph $E(A)$ with $E(A)^0=\{1,\ldots,n\}$
and $E(A)^1=\bigcup_{i,j\in E^0}E_{ij}$, where $E_{ij}$ is the set of $A_{ij}$-many
edges from $i$ to $j$. For instance, for $n=3$, we have
\[
\begin{tikzpicture}[auto,swap]
\tikzstyle{vertex}=[circle,fill=black,minimum size=3pt,inner sep=0pt]
\tikzstyle{edge}=[draw,->]
\tikzstyle{cycle1}=[draw,->,out=130, in=50, loop, distance=40pt]
\tikzstyle{cycle2}=[draw,->,out=-100, in=-30, loop, distance=40pt]
   
\node[vertex,label=right:$1$] (0) at (0,0) {};
\node[vertex,label=below left:$3$] (1) at (2.5,-2.5) {};
\node[vertex,label=below left:$2$] (2) at (-2.5,-2.5) {};

\path (0) edge[cycle1] node[below left] {$A_{11}$} (0);
\path (0) edge[edge, bend left] node[right] {$A_{13}$} (1);
\path (1) edge[edge] node[left] {$A_{31}$} (0);
\path (0) edge[edge,bend right] node {$A_{12}$} (2);
\path (1) edge[cycle2] node {$A_{33}$} (1);
\path (1) edge[edge] node {$A_{32}$} (2);
\path (2) edge[edge] node {$A_{21}$} (0);
\path (2) edge[edge, bend right] node {$A_{23}$} (1);
\path (2) edge[cycle2] node {$A_{22}$} (2);

\end{tikzpicture}
\]
A vertex $i$ of the graph $E(A)$ emits or receives at least one edge if and only if
\[
\sum_{k=1}^nA_{ik}+A_{ki}>0.
\]
Note also that $A(E(M))=M$ and $E(A(G))=G$. Therefore, as no loops in $E$ means that $A(E)$ is nilpotent,
we can reformulate the foregoing corollary as follows:
\begin{corollary}
Let 
\[
\mathcal{A}_N:=\left\{A\in \bigcup_{k\in\mathbb{N}\setminus\{0\}}M_k(\mathbb{N})~|~\text{satisfying } 1,2,3\text{ below}\right\}.
\]
\begin{enumerate}
\item $A$ is nilpotent ($E$ has no loops),
\item $\sum\limits_{{\rm all}~i, j}A_{ij}=N$ ($E$ has $N$ edges),
\item $\forall\;i:\sum\limits_{{\rm all}~j}A_{ij}+A_{ji}>0$ (each vertex of $E$ emits or receives).
\end{enumerate}
Then, $\forall\;k\in\{1,\ldots,N\}$:
\[
\max_{A\in\mathcal{A}_n}\left\{\sum_{{\rm all}~i, j}\left(A^k\right)_{ij}\right\}=(n+1)^rn^{k-r},
\]
where $N=:nk+r$ with $r\in\{0,1,\ldots,k-1\}$.
\end{corollary}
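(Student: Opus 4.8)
The plan is to deduce this statement directly from the preceding corollary by transporting it across the graph--matrix dictionary established above. Recall that the assignments $E\mapsto A(E)$ and $A\mapsto E(A)$ are mutually inverse, in the sense that $A(E(M))=M$ and $E(A(G))=G$. First I would verify that, under this correspondence, each of the three defining conditions of $\mathcal{A}_N$ matches exactly the corresponding condition defining $\mathcal{E}_N$. Condition~1 is handled by the corollary characterizing nilpotency: $A$ is nilpotent if and only if $E(A)$ has no loops. Condition~2 is immediate from the definition of the adjacency matrix, since $\sum_{i,j}A_{ij}$ counts all edges of $E(A)$, so it equals $N$ precisely when $E(A)$ has $N$ edges. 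Condition~3 is the matrix transcription of the requirement that each vertex emit or receive at least one edge, as was already observed: vertex $i$ is incident to some edge exactly when $\sum_j(A_{ij}+A_{ji})>0$.

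Second, I would check that the two objective functions coincide. By the corollary expressing the number of $k$-paths, $FP_k(E)=\sum_{v,w\in E^0}(A(E)^k)_{vw}$; applying this to $E=E(A)$ and using $A(E(A))=A$ gives $\sum_{i,j}(A^k)_{ij}=FP_k(E(A))$. Thus maximizing $\sum_{i,j}(A^k)_{ij}$ over $\mathcal{A}_N$ is the same as maximizing the number of $k$-paths over the graphs $E(A)$ with $A\in\mathcal{A}_N$, and by the first step these graphs are precisely the members of $\mathcal{E}_N$.

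The only point requiring a little care is the bookkeeping of vertex labels: a matrix in $M_k(\mathbb{N})$ carries an ordering of the vertex set, whereas the abstract graphs in $\mathcal{E}_N$ need not. I would resolve this by noting that relabeling the vertices of $E(A)$ corresponds to conjugating $A$ by a permutation matrix $P$, and that $\sum_{i,j}((PAP^{-1})^k)_{ij}=\sum_{i,j}(A^k)_{ij}$, because $(PAP^{-1})^k=PA^kP^{-1}$ merely permutes the entries of $A^k$ without changing their total. Hence the quantity being maximized depends only on the isomorphism class of $E(A)$, the dictionary descends to a bijection between $\mathcal{A}_N$ (modulo conjugation) and $\mathcal{E}_N$, and the two maxima are literally equal. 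The resulting equality with $(n+1)^r n^{k-r}$ is then exactly the content of the preceding corollary. I do not expect a genuinely hard step here: the entire argument is the faithful transport of each structure across the dictionary, and the only thing one must not skip is confirming the permutation invariance that makes the labeled and unlabeled optimizations agree.
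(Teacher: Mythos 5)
Your proposal is correct and is essentially the paper's own argument: the paper presents this corollary as an immediate reformulation of the preceding one via the dictionary $E\mapsto A(E)$, $A\mapsto E(A)$, using exactly the three condition-by-condition translations and the identity $FP_k(E)=\sum_{v,w}(A(E)^k)_{vw}$ that you invoke. Your additional check that relabeling vertices corresponds to conjugation by a permutation matrix, which leaves $\sum_{i,j}(A^k)_{ij}$ unchanged, is a sensible bit of bookkeeping that the paper leaves implicit.
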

\begin{conjecture}
Let $N$ be a non-negative real number, and let
\[
\mathcal{A}_N:=\left\{A\in\bigcup_{k\in\mathbb{N}\setminus\{0\}}M_k(\mathbb{R}_{\geq 0})~|~\text{satisfying } 1,2,3\text{ above}\right\}.
\]
Then, $\forall\;k\in\{1,\ldots,[N]\}$:
\[
\sup_{A\in\mathcal{A}_N}\left\{\sum_{{\rm all}~i,j}\left(A^k\right)_{ij}\right\}=\left(\frac{N}{k}\right)^k.
\]
Here $[N]$ stands for the integer part of $N$.
\end{conjecture}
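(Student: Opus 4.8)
The plan is to deduce this continuous statement from the already-established integral theorem (the boxed $P^N_k$-bound, in its matrix reformulation given by the second-to-last corollary) via a scaling-and-approximation argument, rather than by re-running the reshaping algorithms over $\mathbb{R}_{\geq 0}$. Two inequalities must be shown: the construction bound $\sup\geq\left(\frac{N}{k}\right)^k$ and the estimate $\sup\leq\left(\frac{N}{k}\right)^k$.

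For the lower bound I would simply exhibit a matrix attaining the value. Take the $(k+1)\times(k+1)$ strictly upper bidiagonal matrix $B$ with $B_{i,i+1}=N/k$ for $i=1,\ldots,k$ and all other entries $0$. Then $B$ is nilpotent, its entries sum to $k\cdot(N/k)=N$, and every vertex emits or receives, so $B\in\mathcal{A}_N$. Since $B^k$ has the single nonzero entry $(B^k)_{1,k+1}=(N/k)^k$, we get $\sum_{i,j}(B^k)_{ij}=\left(\frac{N}{k}\right)^k$, which already shows the supremum is attained.

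For the upper bound, first treat rational $A\in\mathcal{A}_N$ (so $N\in\mathbb{Q}$). Choosing a common denominator gives a positive integer $M$ with $MA$ having non-negative integer entries; then $MA$ is nilpotent, has integer entry-sum $MN=:n'k+r'$ with $0\leq r'\leq k-1$, and still satisfies condition $3$, so it lies in the integral class. Since $k\leq[N]\leq N\leq MN$, the integral matrix corollary applies and yields
\[
\sum_{i,j}\bigl((MA)^k\bigr)_{ij}\;\leq\;(n'+1)^{r'}(n')^{k-r'}\;\leq\;(n'+1)^k\;\leq\;\left(\frac{MN}{k}+1\right)^k,
\]
using $n'=\lfloor MN/k\rfloor\leq MN/k$. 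Dividing by $M^k$ and using $(MA)^k=M^kA^k$ gives
\[
\sum_{i,j}(A^k)_{ij}\;\leq\;\left(\frac{N}{k}+\frac{1}{M}\right)^k,
\]
and letting $M\to\infty$ through multiples of the common denominator yields $\sum_{i,j}(A^k)_{ij}\leq\left(\frac{N}{k}\right)^k$. To pass to real $A$, note that nilpotency means the support is a directed acyclic graph, so after a permutation (which preserves the entry-sum, nilpotency, condition $3$, and the value $\mathbf{1}^{\!\top}A^k\mathbf{1}$) we may assume $A$ is strictly upper triangular; replacing each positive entry by a rational approximation while keeping zeros zero produces rational matrices $A^{(m)}\to A$ of identical support, hence in $\mathcal{A}_{N_m}$ with $N_m\to N$. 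Continuity of the polynomial $A\mapsto\sum_{i,j}(A^k)_{ij}$ and of $t\mapsto(t/k)^k$ then upgrades the rational bound to $\sum_{i,j}(A^k)_{ij}\leq\left(\frac{N}{k}\right)^k$. Combined with the construction, this gives the claimed equality.

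The conceptual heart is the limit $M^{-k}(n'+1)^{r'}(n')^{k-r'}\to(N/k)^k$, which is exactly what forces the exponent-$k$ answer; the remaining difficulties are bookkeeping rather than ideas, namely checking that \emph{every} hypothesis (nilpotency, the limiting entry-sum, and condition $3$) survives both the scaling and the rational approximation, and that the integral theorem's side constraints ($N'\geq2$, $k\leq N'$) hold for all large $M$, which they do since $MN\to\infty$. The main obstacle, if one instead wanted a self-contained proof mirroring the discrete argument, would be re-verifying that each step of the $E_2$-$E_3$-$E_4$ and $F_k$-$G_l$ reshaping is weight-monotone over $\mathbb{R}_{\geq0}$ and then finishing by $\prod_{i=1}^k b_i\leq\left(\frac{N}{k}\right)^k$ under $\sum_i b_i=N$ via the arithmetic–geometric mean inequality; that route is closer in spirit to the integral proof but carries all the real work in the monotonicity verifications, which the scaling argument sidesteps entirely.
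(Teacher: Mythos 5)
The statement you have proved is left as a \emph{conjecture} in the paper --- the paper contains no proof of it at all --- so there is no ``paper proof'' to compare against; your argument stands on its own, and it appears to be essentially correct: it would upgrade the conjecture to a theorem using only the paper's integral result (Theorem~1.10, via its adjacency-matrix corollary). Your lower bound is exactly right: the $(k+1)\times(k+1)$ bidiagonal matrix with $k$ entries equal to $N/k$ lies in $\mathcal{A}_N$ and attains $(N/k)^k$, so the supremum is in fact a maximum. Your upper bound --- clear denominators so the integral corollary applies to $MA$, bound $(n'+1)^{r'}(n')^{k-r'}\le (MN/k+1)^k$, divide by $M^k$, let $M\to\infty$; then reach irrational matrices by support-preserving rational approximation after a permutation puts $A$ in strictly upper triangular form --- is sound, but two details should be tightened. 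First, when you apply your rational-case bound to the approximants $A^{(m)}\in\mathcal{A}_{N_m}$, the hypothesis $k\le[N_m]$ that you invoked in the rational case may fail (e.g.\ when $N=k$ and $N_m<N$); you should state the rational lemma for \emph{every} integer $k\ge 1$ and every positive rational entry-sum, which is what your proof actually establishes, since the only constraints needed --- $MN_m\ge 2$ and $k\le MN_m$ --- hold for all large $M$. Second, the equivalence ``non-negative nilpotent matrix $\iff$ acyclic support $\iff$ permutation-similar to a strictly upper triangular matrix'' is used twice (to justify the permutation, and to see that support-preserving perturbations remain nilpotent), and the paper proves the loops/nilpotency correspondence only for integer adjacency matrices; the real version needs its one-line proof (a cycle in the support forces $(A^{lm})_{ii}>0$ for all $l$, and conversely an acyclic support admits a topological order). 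Finally, a simplification worth noting: AM--GM gives $(n'+1)^{r'}(n')^{k-r'}\le (MN/k)^k$ directly, since the $r'$ factors $n'+1$ and the $k-r'$ factors $n'$ sum to $MN$; dividing by $M^k$ then yields $\sum_{i,j}(A^k)_{ij}\le (N/k)^k$ for a single admissible $M$, with no limit over $M$ required.
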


\subsection{The structure of graphs}
\begin{definition}
Let $E$ be a graph. An \underline{undirected finite path} in $E$ is a finite sequence
of edges $(e_1,\ldots,e_n)$ satisfying at least one of the $4$ equalities:
\begin{enumerate}
\item $s(e_i)=s(e_{i+1})$,
\qquad\begin{tikzpicture}[auto,swap]
\tikzstyle{vertex}=[circle,fill=black,minimum size=3pt,inner sep=0pt]
\tikzstyle{edge}=[draw,->]
\tikzstyle{cycle1}=[draw,->,out=130, in=50, loop, distance=40pt]
\tikzstyle{cycle2}=[draw,->,out=-100, in=-30, loop, distance=40pt]
   
\node[vertex] (0) at (0,0) {};
\node[vertex] (1) at (1,0) {};
\node[vertex] (2) at (2,0) {};

\path (1) edge[edge] node {$e_i$} (0);
\path (1) edge[edge] node[above] {$e_{i+1}$} (2);

\end{tikzpicture}
\item $s(e_i)=t(e_{i+1})$,
\qquad\begin{tikzpicture}[auto,swap]
\tikzstyle{vertex}=[circle,fill=black,minimum size=3pt,inner sep=0pt]
\tikzstyle{edge}=[draw,->]
\tikzstyle{cycle1}=[draw,->,out=130, in=50, loop, distance=40pt]
\tikzstyle{cycle2}=[draw,->,out=-100, in=-30, loop, distance=40pt]
   
\node[vertex] (0) at (0,0) {};
\node[vertex] (1) at (1,0) {};
\node[vertex] (2) at (2,0) {};

\path (1) edge[edge] node {$e_i$} (0);
\path (2) edge[edge] node[above] {$e_{i+1}$} (1);

\end{tikzpicture}
\item $t(e_i)=s(e_{i+1})$,
\qquad\begin{tikzpicture}[auto,swap]
\tikzstyle{vertex}=[circle,fill=black,minimum size=3pt,inner sep=0pt]
\tikzstyle{edge}=[draw,->]
\tikzstyle{cycle1}=[draw,->,out=130, in=50, loop, distance=40pt]
\tikzstyle{cycle2}=[draw,->,out=-100, in=-30, loop, distance=40pt]
   
\node[vertex] (0) at (0,0) {};
\node[vertex] (1) at (1,0) {};
\node[vertex] (2) at (2,0) {};

\path (0) edge[edge] node[above] {$e_i$} (1);
\path (1) edge[edge] node[above] {$e_{i+1}$} (2);

\end{tikzpicture}
\item $t(e_i)=t(e_{i+1})$.
\qquad\begin{tikzpicture}[auto,swap]
\tikzstyle{vertex}=[circle,fill=black,minimum size=3pt,inner sep=0pt]
\tikzstyle{edge}=[draw,->]
\tikzstyle{cycle1}=[draw,->,out=130, in=50, loop, distance=40pt]
\tikzstyle{cycle2}=[draw,->,out=-100, in=-30, loop, distance=40pt]
   
\node[vertex] (0) at (0,0) {};
\node[vertex] (1) at (1,0) {};
\node[vertex] (2) at (2,0) {};

\path (0) edge[edge] node[above] {$e_i$} (1);
\path (2) edge[edge] node[above] {$e_{i+1}$} (1);

\end{tikzpicture}
\end{enumerate}
for all $i\in \{1,\ldots,n-1\}$.

An \underline{undirected infinite path} in $E$ is an infinite sequence
$(e_1,\ldots,e_n,\ldots)$ satisfying at least one of the above $4$ equalities
for all $i\in \mathbb{N}\setminus\{0\}$.
\end{definition}
\begin{definition}
We say that a finite graph is connected $E$ is \underline{connected}
iff for any pair of vertices $(v,w)$, $v\neq w$, there exists an \underline{undirected} finite path
$p=(e_1,\ldots,e_n)$ between $v$ and $w$: ($s(e_1)=v$ or $t(e_1)=v$) and ($t(e_n)=w$ or $s(e_n)=w$).
\end{definition}
\begin{definition}
A vertex $v$ in a graph $E$ is called a \underline{sink} iff $s^{-1}(v)=\emptyset$.
\end{definition}
\begin{proposition}
If $E$ is a graph with finitely many edges, no loops, and exactly one sink, then $E$ is connected.
\end{proposition}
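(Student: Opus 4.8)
The plan is to reduce undirected connectedness to a purely directed statement: every vertex is the source of a directed path ending at the unique sink. Once this is established, any two vertices can be joined by splicing their two forward paths together (one of them read backwards) into a single undirected path.

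First I would show that, starting from an arbitrary vertex $v$, following outgoing edges must terminate at a sink. If $v$ is not a sink then $s^{-1}(v)\neq\emptyset$, so there is an edge $e_1$ with $s(e_1)=v$; putting $v_1:=t(e_1)$ and iterating produces a directed path $(e_1,e_2,\ldots)$. Because $E$ has no loops, the argument already used to prove that $\mathrm{FP}(E)$ is finite shows that the edges of such a path are pairwise distinct, and since $E^1$ is finite this forward march stops after finitely many steps. It can only stop at a vertex emitting no edge, that is, at a sink. As $E$ has exactly one sink $w_0$, every vertex $v$ is therefore the source of a (possibly trivial) directed path terminating at $w_0$. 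When $E^1\neq\emptyset$ there are moreover no isolated vertices at all, since any such vertex would itself be a second sink; and the degenerate case $E^1=\emptyset$ forces $E$ to consist of the single vertex $w_0$ and is vacuously connected.

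Next, given distinct vertices $v$ and $w$, choose directed paths $(e_1,\ldots,e_n)$ from $v$ to $w_0$ and $(f_1,\ldots,f_m)$ from $w$ to $w_0$ as above, and consider the concatenation
\[
(e_1,\ldots,e_n,f_m,f_{m-1},\ldots,f_1).
\]
I would check it is an undirected path in the sense of the definition: consecutive edges among the $e_i$ satisfy $t(e_i)=s(e_{i+1})$ (condition (3)); at the seam one has $t(e_n)=w_0=t(f_m)$ (condition (4)); and each reversed consecutive pair $(f_{j+1},f_j)$ satisfies $s(f_{j+1})=t(f_j)$ (condition (2)). Its endpoints are $v=s(e_1)$ and $w=s(f_1)$, which is precisely what connectedness demands; the subcases where $v=w_0$ or $w=w_0$ collapse to a single already-constructed path.

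The main obstacle is the careful bookkeeping at the two points where the orientation is broken. At the termination step I must argue that the forward march cannot stall anywhere except a sink and cannot loop back, which is where the no-loops and finite-$E^1$ hypotheses do their work and where the exactly-one-sink assumption is used to pin down the common endpoint $w_0$. At the seam of the concatenation the two spliced paths point into $w_0$ from opposite directions, so one must invoke the correct undirected adjacency condition there rather than the directed one; verifying that the four cases of the definition cover exactly the transitions that occur is the only genuinely delicate part.
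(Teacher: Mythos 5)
Your proposal is correct and follows essentially the same route as the paper's proof: show that from any vertex the forward march along outgoing edges must terminate (no loops plus finitely many edges) and can only terminate at the unique sink, then splice the two directed paths from $v$ and $w$ at that sink into one undirected path. Your treatment is somewhat more careful about the degenerate cases ($E^1=\emptyset$, isolated vertices) and about verifying the four adjacency conditions at the seam, but the underlying argument is identical.
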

\begin{proof}
Denote the sink by $v_0$. If it is the only vertex of $E$, then $E$ is connected.
If there is $v_1\neq v_0$, then there exists a path from $v_1$ to $v_0$. Indeed, as $v_0$ is the unique sink, $v_1$ emits an edge $e_1$.
Consider any path $p_n:=(e_1,\ldots,e_n)$, e.g. $p_1=e_1$. If $s(p_n)\neq v_0$, then, sd $v_0$ is the unique sink, $s(p_n)$ emits $e_{n+1}$
yielding the path $p_{n+1}:=(e_1,\ldots,e_n,e_{n+1})$ which is longer than $p_n$. Hence, if no path starting at $v_1$ terminates at $v_0$,
we can have paths of arbitrary lengths, which is impossible because $E$ has finitely many edges and no loops.
Therefore, there is a path from $v_1$ to $v_0$. Now, take any pair of distinct vertices in $E$. If one of them is $v_0$,
then they are connected by a path. If $w_1\neq v_0$, $w_2\neq v_0$, $w_1\neq w_2$, then there is a path $q_1:=(f_1,\ldots,f_k)$
from $w_1$ to $v_0$ and a path $q_2:=(g_1,\ldots, g_l)$ from $w_2$ to $v_0$.
They combine into the undirected path $(f_1,\ldots,f_k,g_l,\ldots,g_1)$ from $w_1$ to $w_2$, so $E$ is connected.
\[
\begin{tikzpicture}[auto,swap]
\tikzstyle{vertex}=[circle,fill=black,minimum size=3pt,inner sep=0pt]
\tikzstyle{edge}=[draw,->]
\tikzstyle{cycle1}=[draw,->,out=130, in=50, loop, distance=40pt]
\tikzstyle{cycle2}=[draw,->,out=-100, in=-30, loop, distance=40pt]
   
\node[vertex,label=left:$w_1$] (0) at (0,0) {};
\node[vertex] (1) at (1,0) {};
\node (2) at (1.5,0) {\ldots};
\node[vertex] (3) at (2,0) {};
\node[vertex,label=below:$v_0$] (4) at (3,0) {};
\node[vertex] (5) at (4,0) {};
\node (6) at (4.5,0) {\ldots};
\node[vertex] (7) at (5,0) {};
\node[vertex,label=right:$w_2$] (8) at (6,0) {};

\path (0) edge[edge] node[above] {$f_1$} (1);
\path (3) edge[edge] node[above] {$f_k$} (4);
\path (5) edge[edge] node[above] {$g_l$} (4);
\path (8) edge[edge] node[above] {$g_1$} (7);

\end{tikzpicture}
\]
\end{proof}

\noindent\underline{Question:} What is the maximal number of all finite paths in a graph with $N$ edges, no loops, and exactly one sink?

\begin{definition}
Let $E$ be a graph. A subset $H\subseteq E^0$ is called \underline{hereditary}
iff any finite edge starting at $v\in H$ ends at $w\in H$.
\end{definition}
\noindent
Note that in the above definition one can replace the word ``edge'' by the word ``path''. Indeed, if $H$ is path-hereditary, then it is, in particular edge-hereditary.
Also, if $H$ is not path hereditary, then there exists a path starting in $H$ and ending outside of $H$. Such a path must contain an edge starting at $H$ and ending outside of $H$,
so it is also not edge-hereditary. This proves the equivalence of these two definitions.

\noindent\underline{Examples:}
\begin{enumerate}
\item In any graph $E$, $\emptyset$ and $E^0$ are hereditary.
\item Consider a graph $E$:
\[
\begin{tikzpicture}[auto,swap]
\tikzstyle{vertex}=[circle,fill=black,minimum size=3pt,inner sep=0pt]
\tikzstyle{edge}=[draw,->]
\tikzstyle{cycle1}=[draw,->,out=130, in=50, loop, distance=40pt]
\tikzstyle{cycle2}=[draw,->,out=100, in=30, loop, distance=40pt]
   
\node[vertex,label=below:$v$] (0) at (0,0) {};
\node[vertex,label=below:$w$] (1) at (1,0) {};

\path (0) edge[cycle1] node {} (0);
\path (0) edge[edge] node {} (1);

\end{tikzpicture}
\]
Then, $\{w\}$ is hereditary and $\{v\}$ is not hereditary.
\end{enumerate}
\begin{definition}
Let $E=(E^0,E^1,s,t)$ be a graph.
$F=(F^0,F^1,s_F,t_F)$ is a \underline{subgraph} of $E$ iff
$F^0\subseteq E^0$, $F^1\subseteq E^1$, and
\[
\forall\;e\in F^1:\quad s_F(e)=s(e)\in F^0,\quad t_F(e)=t(e)\in F^0.
\]
\end{definition}
\begin{proposition}
Let $E=(E^0,E^1,s,t)$ be a graph, and let $H\subseteq E^0$.
Set $F^0:=E^0\setminus H$ and $F^1:=E^1\setminus t^{-1}(H)$.
Then the formulas $s_F(e)=s(e)$, $t_F(e)=t(e)$, for any $e\in F^1$,
define a subgraph of $E$ if and only if $H$ is hereditary.
\end{proposition}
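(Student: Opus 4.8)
The plan is to unwind both definitions and observe that the subgraph conditions split into two independent requirements on the edges of $F$: that each edge's target lands in $F^0$, and that each edge's source lands in $F^0$. I expect the target requirement to hold automatically from the way $F^1$ is defined, whereas the source requirement will turn out to be logically equivalent to hereditarity of $H$. So the whole proposition should reduce to matching up a single clause of the subgraph definition with the hereditary condition.

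First I would record the containments $F^0\subseteq E^0$ and $F^1\subseteq E^1$, which are immediate since both sets are defined as subsets of $E^0$ and $E^1$. Then, for any $e\in F^1$, the defining condition $e\notin t^{-1}(H)$ means precisely $t(e)\notin H$, i.e.\ $t(e)\in F^0$; hence the target clause $t_F(e)=t(e)\in F^0$ holds for every edge of $F$, with no assumption on $H$ whatsoever. The entire content of the proposition therefore concerns only the source clause $s_F(e)=s(e)\in F^0$.

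For the backward direction I would assume $H$ hereditary, take $e\in F^1$, and argue by contradiction: if $s(e)\in H$, then hereditarity forces $t(e)\in H$, contradicting the $t(e)\notin H$ established above; hence $s(e)\in F^0$, and $F$ is a subgraph. For the forward direction I would assume $F$ is a subgraph and verify hereditarity directly in its edge form (which, by the remark following the definition of hereditary, suffices): given an edge $e$ with $s(e)\in H$, suppose toward a contradiction that $t(e)\notin H$; then $e\notin t^{-1}(H)$, so $e\in F^1$, and the subgraph property yields $s(e)=s_F(e)\in F^0=E^0\setminus H$, contradicting $s(e)\in H$. Thus $t(e)\in H$, and $H$ is hereditary.

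I do not anticipate a genuine obstacle here, since the argument is essentially a contraposition: the hereditary implication ``$s(e)\in H\Rightarrow t(e)\in H$'' is exactly the contrapositive of the source clause ``$t(e)\notin H\Rightarrow s(e)\notin H$'' restricted to $F^1$. The one point that deserves care is the bookkeeping around $t^{-1}(H)$: removing these edges deletes exactly the arrows pointing into $H$, so that $F^1$ consists precisely of the edges whose target lies in $F^0$. Getting this right is what makes the target clause automatic and isolates hereditarity as the sole genuine condition, on the sources.
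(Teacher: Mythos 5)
Your proof is correct and follows essentially the same route as the paper's: both first note that the target clause $t(e)\in F^0$ holds automatically from the definition $F^1=E^1\setminus t^{-1}(H)$, and then identify the source clause with hereditarity of $H$ via contraposition (your contradiction arguments are just the contrapositive chains the paper writes out directly). No gaps; nothing further to add.
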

\begin{proof}
Note first that, if $e\in F^1:=E^1\setminus t^{-1}(H)$,
then $t_F(e)=t(e)\in E^0\setminus H=:F^0$.
Hence, $t:E^1\to E^0$ always restricts-corestricts to $t_F:F^1\to F^0$.

Assume now that $H$ is hereditary. Then, if $e\in E^1$ and $s(e)\in H$,
we have $t(e)\in H$. Hence 
$$
e\in F^1:=E^1\setminus t^{-1}(H)\iff
t(e)\not\in H\Rightarrow s(e)\not\in H\iff s(e)\in F^0:=E^0\setminus H.
$$
Therefore, $s:E^1\to E^0$ restricts-corestricts to $s_F:F^1\to F^0$.
Vice versa, assume that $s$ restricts-corestricts to~$s_F$.
Then 
$$
t(e)\not\in H\iff e\in F^1\Rightarrow s(e)\in F^0\iff s(e)\not\in H,
$$
so $s(e)\in H$ $\Rightarrow$ $t(e)\in H$, i.e.\ $H$ is hereditary. 
\end{proof}

\begin{definition}
Let $E$ be a graph. A subset $H\subseteq E^0$ is called
\underline{saturated} iff
\[
\not\!\exists\;v\in E^0\setminus H\colon\; 0<|s^{-1}(v)|<\infty \text{ and } t(s^{-1}(v))\subseteq H.
\]
\end{definition}
\noindent\underline{Examples:}
\begin{enumerate}
\item In any graph $E$, $\emptyset$ and $E^0$ are saturated.
\item Consider a graph $E$:
\[
\begin{tikzpicture}[auto,swap]
\tikzstyle{vertex}=[circle,fill=black,minimum size=3pt,inner sep=0pt]
\tikzstyle{edge}=[draw,->]
\tikzstyle{cycle1}=[draw,->,out=130, in=50, loop, distance=40pt]
\tikzstyle{cycle2}=[draw,->,out=100, in=30, loop, distance=40pt]
   
\node[vertex,label=below:$v$] (0) at (0,0) {};
\node[vertex,label=below:$w$] (1) at (1,0) {};

\path (0) edge[cycle1] node {} (0);
\path (0) edge[edge] node {} (1);

\end{tikzpicture}
\]
Then all subsetes of $E^0$ are saturated.
\item Consider a graph $E$:
\[
\begin{tikzpicture}[auto,swap]
\tikzstyle{vertex}=[circle,fill=black,minimum size=3pt,inner sep=0pt]
\tikzstyle{edge}=[draw,->]
\tikzstyle{cycle1}=[draw,->,out=130, in=50, loop, distance=40pt]
\tikzstyle{cycle2}=[draw,->,out=100, in=30, loop, distance=40pt]
   
\node[vertex,label=below:$v$] (0) at (0,0) {};
\node[vertex,label=below:$w$] (1) at (1,0) {};

\path (0) edge[edge] node {} (1);

\end{tikzpicture}
\]
Then $\{w\}$ is not saturated but it is hereditary.
\item Consider a graph $E$:
\[
\begin{tikzpicture}[auto,swap]
\tikzstyle{vertex}=[circle,fill=black,minimum size=3pt,inner sep=0pt]
\tikzstyle{edge}=[draw,->]
\tikzstyle{cycle1}=[draw,->,out=130, in=50, loop, distance=40pt]
\tikzstyle{cycle2}=[draw,->,out=100, in=30, loop, distance=40pt]
   
\node[vertex,label=below:$v$] (0) at (0,0) {};
\node (2) at (1,0) {$(\infty)$};
\node[vertex,label=below:$w$] (1) at (2,0) {};

\path (0) edge[draw] node {} (2);
\path (2) edge[edge] node {} (1);

\end{tikzpicture}
\]
Then $\{w\}$ is both saturated and hereditary.
\end{enumerate}

\subsection{Homomorphisms of graphs}
\begin{definition}
A \underline{homomorphism} from a graph $E:=(E^0,E^1,s_E,t_E)$ to a graph $F:=(F^0,F^1,s_F,t_F)$
is a pair of maps \[(f^0:E^0\to F^0,f^1:E^1\to F^1)\] satisfying the conditions:
\[ s_F\circ f^1=f^0\circ s_E\,,\qquad t_F\circ f^1=f^0\circ t_E\,.\]
\end{definition}

\noindent\underline{Examples:}
\begin{enumerate}
\item Inclusions of subgraphs
$E^0\overset{f^0}{\hookrightarrow}F^0$, $E^1\overset{f^1}{\hookrightarrow}F^1$, e.g.:
\[
\begin{tikzpicture}[auto,swap]
\tikzstyle{vertex}=[circle,fill=black,minimum size=3pt,inner sep=0pt]
\tikzstyle{edge}=[draw,->]
\tikzstyle{cycle1}=[draw,->,out=130, in=50, loop, distance=40pt]
\tikzstyle{cycle2}=[draw,->,out=100, in=30, loop, distance=40pt]
   
\node[vertex,label=below:$v$] (0) at (0,0) {};

\path (0) edge[cycle1] node[above] {$e$} (0);

\end{tikzpicture}\quad\overset{(f^0,f^1)}{\longto}\quad
\begin{tikzpicture}[auto,swap]
\tikzstyle{vertex}=[circle,fill=black,minimum size=3pt,inner sep=0pt]
\tikzstyle{edge}=[draw,->]
\tikzstyle{cycle1}=[draw,->,out=130, in=50, loop, distance=40pt]
\tikzstyle{cycle2}=[draw,->,out=100, in=30, loop, distance=40pt]
   
\node[vertex,label=below:$v$] (0) at (0,0) {};
\node[vertex,label=below:$w$] (1) at (1,0) {};

\path (0) edge[cycle1] node[above] {$e$} (0);
\path (0) edge[edge] node[above] {$g$} (1);

\end{tikzpicture}
\]
\[
f^0(v)=v,\qquad f^1(e)=e,\qquad\text{or}
\]
\[
\begin{tikzpicture}[auto,swap]
\tikzstyle{vertex}=[circle,fill=black,minimum size=3pt,inner sep=0pt]
\tikzstyle{edge}=[draw,->]
\tikzstyle{cycle1}=[draw,->,out=130, in=50, loop, distance=40pt]
\tikzstyle{cycle2}=[draw,->,out=100, in=30, loop, distance=40pt]
   
\node[vertex,label=below:$v$] (0) at (0,0) {};
\node[vertex,label=below:$w$] (1) at (1,0) {};

\path (0) edge[edge] node[above] {$e$} (1);

\end{tikzpicture}\quad\longto\quad
\begin{tikzpicture}[auto,swap]
\tikzstyle{vertex}=[circle,fill=black,minimum size=3pt,inner sep=0pt]
\tikzstyle{edge}=[draw,->]
\tikzstyle{cycle1}=[draw,->,out=130, in=50, loop, distance=40pt]
\tikzstyle{cycle2}=[draw,->,out=100, in=30, loop, distance=40pt]
   
\node[vertex,label=below:$v$] (0) at (0,0) {};
\node[vertex,label=below:$w$] (1) at (2,0) {};

\path (0) edge[edge] node[above] {$n$-edges} (1);

\end{tikzpicture}
\]
\[
f^0(v)=v,\qquad f^0(w)=w,\qquad f^1(e)=e_1\,.
\]
\item Collapsing edges between the same vertices to one edge, e.g.
\[
\begin{tikzpicture}[auto,swap]
\tikzstyle{vertex}=[circle,fill=black,minimum size=3pt,inner sep=0pt]
\tikzstyle{edge}=[draw,->]
\tikzstyle{cycle1}=[draw,->,out=130, in=50, loop, distance=40pt]
\tikzstyle{cycle2}=[draw,->,out=100, in=30, loop, distance=40pt]
   
\node[vertex,label=below:$v$] (0) at (0,0) {};
\node[vertex,label=below:$w$] (1) at (1,0) {};

\path (0) edge[edge] node[above] {$\infty$} (1);

\end{tikzpicture}\quad\longto\quad
\begin{tikzpicture}[auto,swap]
\tikzstyle{vertex}=[circle,fill=black,minimum size=3pt,inner sep=0pt]
\tikzstyle{edge}=[draw,->]
\tikzstyle{cycle1}=[draw,->,out=130, in=50, loop, distance=40pt]
\tikzstyle{cycle2}=[draw,->,out=100, in=30, loop, distance=40pt]
   
\node[vertex,label=below:$v$] (0) at (0,0) {};
\node[vertex,label=below:$w$] (1) at (1,0) {};

\path (0) edge[edge] node[above] {$e$} (1);

\end{tikzpicture}
\]
\[
f^0(v)=v,\qquad f^0(w)=w,\qquad f^1(e_i)=e,\qquad\text{or}
\]
\[
\begin{tikzpicture}[auto,swap]
\tikzstyle{vertex}=[circle,fill=black,minimum size=3pt,inner sep=0pt]
\tikzstyle{edge}=[draw,->]
\tikzstyle{cycle1}=[draw,->,out=130, in=50, loop, distance=40pt]
\tikzstyle{cycle2}=[draw,->,out=130, in=50, loop, distance=60pt]
   
\node[vertex,label=below:$v$] (0) at (0,0) {};

\path (0) edge[cycle1] node {$e$} (0);
\path (0) edge[cycle2] node[above] {$f$} (0);

\end{tikzpicture}\quad\longto\quad
\begin{tikzpicture}[auto,swap]
\tikzstyle{vertex}=[circle,fill=black,minimum size=3pt,inner sep=0pt]
\tikzstyle{edge}=[draw,->]
\tikzstyle{cycle1}=[draw,->,out=130, in=50, loop, distance=40pt]
\tikzstyle{cycle2}=[draw,->,out=130, in=50, loop, distance=60pt]
   
\node[vertex,label=below:$v$] (0) at (0,0) {};

\path (0) edge[cycle1] node {$u$} (0);

\end{tikzpicture}
\]
\[
f^1(e)=f^1(f)=u,\qquad f^0(v)=v.
\]
\item A combination of both, e.g.
\[
\begin{tikzpicture}[auto,swap]
\tikzstyle{vertex}=[circle,fill=black,minimum size=3pt,inner sep=0pt]
\tikzstyle{edge}=[draw,->]
\tikzstyle{cycle1}=[draw,->,out=130, in=50, loop, distance=40pt]
\tikzstyle{cycle2}=[draw,->,out=130, in=50, loop, distance=60pt]
   
\node[vertex,label=below:$v$] (0) at (0,0) {};
\node[vertex,label=below:$w$] (1) at (1,0) {};

\path (0) edge[cycle1] node {$e_1$} (0);
\path (0) edge[cycle2] node[above] {$e_2$} (0);
\path (0) edge[edge] node[above] {$f$} (1);

\end{tikzpicture}\quad\longto\quad
\begin{tikzpicture}[auto,swap]
\tikzstyle{vertex}=[circle,fill=black,minimum size=3pt,inner sep=0pt]
\tikzstyle{edge}=[draw,->]
\tikzstyle{cycle1}=[draw,->,out=130, in=50, loop, distance=40pt]
\tikzstyle{cycle2}=[draw,->,out=100, in=30, loop, distance=40pt]
   
\node[vertex,label=below:$v$] (0) at (0,0) {};
\node[vertex,label=below:$w_1$] (1) at (-1,-1) {};
\node[vertex,label=below:$w_2$] (2) at (1,-1) {};

\path (0) edge[cycle1] node[above] {$u$} (0);
\path (0) edge[edge] node[above left] {$f_1$} (1);
\path (0) edge[edge] node[above right] {$f_2$} (2);

\end{tikzpicture}
\]
\[
f^0(v)=v,\quad f^0(w)=w_1\,,\quad f^1(e_1)=f^1(e_2)=u,\quad f^1(f)=f_1\,.
\]
\end{enumerate}

From the graph-algebra point of view, of particular interest are injective graph homomorphisms
$(f^0,f^1):E\to F$ (both $f^0$ and $f^1$ injective) satisfying certain conditions.

\begin{definition}
We call an injective homomorphism of graphs $(f^0,f^1):E\to F$
an \underline{admissible} \underline{inclusion} iff it satisfies the following conditions:
\begin{enumerate}
\item $F^0\setminus f^0(E^0)$ is hereditary and saturated,
\item $f^1(E^1)= t^{-1}_F( f^0(E^0))$.
\end{enumerate}
\end{definition}

\noindent \underline{Examples:}
\begin{enumerate}
\item
\[
\begin{tikzpicture}[auto,swap]
\tikzstyle{vertex}=[circle,fill=black,minimum size=3pt,inner sep=0pt]
\tikzstyle{edge}=[draw,->]
\tikzstyle{cycle1}=[draw,->,out=130, in=50, loop, distance=40pt]
\tikzstyle{cycle2}=[draw,->,out=100, in=30, loop, distance=40pt]
   
\node[vertex,label=below:$v$] (0) at (0,0) {};

\path (0) edge[cycle1] node[above] {$e$} (0);

\end{tikzpicture}\quad\longto\quad
\begin{tikzpicture}[auto,swap]
\tikzstyle{vertex}=[circle,fill=black,minimum size=3pt,inner sep=0pt]
\tikzstyle{edge}=[draw,->]
\tikzstyle{cycle1}=[draw,->,out=130, in=50, loop, distance=40pt]
\tikzstyle{cycle2}=[draw,->,out=100, in=30, loop, distance=40pt]
   
\node[vertex,label=below:$v$] (0) at (0,0) {};
\node[vertex,label=below:$w$] (1) at (1,0) {};

\path (0) edge[cycle1] node[above] {$e$} (0);
\path (0) edge[edge] node[above] {$g$} (1);

\end{tikzpicture}
\]
$F^0\setminus f^0(E^0)=\{w\}$ is hereditary and saturated. Also, $f^1(\{e\})=\{e\}$ and
\[
t^{-1}_F(f^0(E^0))= t^{-1}_F(\{v\})=\{e\}.
\]
\item
\[
\begin{tikzpicture}[auto,swap]
\tikzstyle{vertex}=[circle,fill=black,minimum size=3pt,inner sep=0pt]
\tikzstyle{edge}=[draw,->]
\tikzstyle{cycle1}=[draw,->,out=130, in=50, loop, distance=40pt]
\tikzstyle{cycle2}=[draw,->,out=100, in=30, loop, distance=40pt]
   
\node[vertex,label=below:$v$] (0) at (0,0) {};
\node[vertex,label=below:$w$] (1) at (1,0) {};

\path (0) edge[cycle1] node[above] {$e$} (0);
\path (0) edge[edge] node[above] {$g$} (1);

\end{tikzpicture}\quad\longto\quad
\begin{tikzpicture}[auto,swap]
\tikzstyle{vertex}=[circle,fill=black,minimum size=3pt,inner sep=0pt]
\tikzstyle{edge}=[draw,->]
\tikzstyle{cycle1}=[draw,->,out=130, in=50, loop, distance=40pt]
\tikzstyle{cycle2}=[draw,->,out=100, in=30, loop, distance=40pt]
   
\node[vertex,label=below:$v$] (0) at (0,0) {};
\node[vertex,label=below:$w_1$] (1) at (-1,-1) {};
\node[vertex,label=below:$w_2$] (2) at (1,-1) {};

\path (0) edge[cycle1] node[above] {$e$} (0);
\path (0) edge[edge] node[above left] {$g_1$} (1);
\path (0) edge[edge] node[above right] {$g_2$} (2);

\end{tikzpicture}
\]
\[
f^0(v)=v,\quad f^0(w)=w_1\,,\quad f^1(e)=e,\quad f^1(g)=g_1\,.
\]
$F^0\setminus f^0(E^0)=\{w_2\}$ is hereditary and saturated. Also, $f^1(\{e,g\})=\{e,g_1\}$ and
\[
t^{-1}_F(f^0(E^0))= t^{-1}_F(\{v,w_1\})=\{e,g_1\}.
\]
\end{enumerate}

\noindent\underline{Counterexamples:}
\begin{enumerate}
\item 
\[
\begin{tikzpicture}[auto,swap]
\tikzstyle{vertex}=[circle,fill=black,minimum size=3pt,inner sep=0pt]
\tikzstyle{edge}=[draw,->]
\tikzstyle{cycle1}=[draw,->,out=130, in=50, loop, distance=40pt]
\tikzstyle{cycle2}=[draw,->,out=100, in=30, loop, distance=40pt]
   
\node[vertex,label=below:$v$] (0) at (0,0) {};

\path (0) edge[cycle1] node[above] {$e$} (0);

\end{tikzpicture}\quad\longto\quad
\begin{tikzpicture}[auto,swap]
\tikzstyle{vertex}=[circle,fill=black,minimum size=3pt,inner sep=0pt]
\tikzstyle{edge}=[draw,->]
\tikzstyle{cycle1}=[draw,->,out=130, in=50, loop, distance=40pt]
\tikzstyle{cycle2}=[draw,->,out=100, in=30, loop, distance=40pt]
   
\node[vertex,label=below:$v$] (0) at (0,0) {};
\node[vertex,label=below:$w$] (1) at (1,0) {};

\path (0) edge[cycle1] node[above] {$e$} (0);
\path (1) edge[edge] node[above] {$g$} (0);

\end{tikzpicture}
\]
\[
f^0(v)=v,\quad f^1(e)=e.
\]
$F^0\setminus f^0(E^0)=\{v,w\}\setminus\{v\}=\{w\}$ is saturated but not hereditary.
Also,
\[
 t^{-1}_F( f^0(E^0))= t^{-1}_F(\{v\})
=\{e,g\}\neq \{e\}=f^1(E^1).
\]
\item
\[
\begin{tikzpicture}[auto,swap]
\tikzstyle{vertex}=[circle,fill=black,minimum size=3pt,inner sep=0pt]
\tikzstyle{edge}=[draw,->]
\tikzstyle{cycle1}=[draw,->,out=130, in=50, loop, distance=40pt]
\tikzstyle{cycle2}=[draw,->,out=100, in=30, loop, distance=40pt]
   
\node[vertex,label=below:$v$] (0) at (0,0) {};
\node[vertex,label=below:$w$] (1) at (1,0) {};

\path (0) edge[edge] node[above] {$e$} (1);

\end{tikzpicture}\quad\longto\quad
\begin{tikzpicture}[auto,swap]
\tikzstyle{vertex}=[circle,fill=black,minimum size=3pt,inner sep=0pt]
\tikzstyle{edge}=[draw,->]
\tikzstyle{cycle1}=[draw,->,out=130, in=50, loop, distance=40pt]
\tikzstyle{cycle2}=[draw,->,out=100, in=30, loop, distance=40pt]
   
\node[vertex,label=below:$v$] (0) at (0,0) {};
\node[vertex,label=below:$w$] (1) at (1,0) {};

\path (0) edge[edge, bend right] node[below] {$e_2$} (1);
\path (0) edge[edge,bend left] node[above] {$e_1$} (1);

\end{tikzpicture}
\]\[
f^0(v)=v,\quad f^0(w)=w,\quad f^1(e)=e_1\,.
\]
$F^0\setminus f^0(E^0)=\{v,w\}\setminus\{v,w\}=\emptyset$ is hereditary and saturated.
But
\[
 t^{-1}_F( f^0(E^0))=F^1=\{e_1,e_2\}\neq \{e_1\}=f^1(E^1).
\]
\end{enumerate}

\noindent\underline{The intersection of graphs:}

Let $F$ and $G$ be graphs. Assume that $s_F$ and $t_F$ agree, respectively, with $s_G$ and $t_G$ on $F^1\cap G^1$.
Then we can define the \underline{intersection} graph 
\[
F\cap G:=(F^0\cap G^0,F^1\cap G^1,s_\cap,t_\cap),
\]
where $s_\cap,t_\cap:F^1\cap G^1\to F^0\cap G^0$,
\[
\forall\; e\in F^1\cap G^1:\quad s_\cap(e)=s_G(e)=s_F(e),\quad t_\cap(e)=t_G(e)=t_F(e).
\]
$F\cap G$ is, clearly, a subgraph of both $F$ and $G$. We say that the intersection is \underline{admissible}
iff both inclusions $F\cap G\hookrightarrow F$ and $F\cap G\hookrightarrow G$ are admissible inclusions.

\noindent\underline{Examples:}
\begin{enumerate}
\item
\[
F\cap G\quad =\quad
\begin{tikzpicture}[auto,swap]
\tikzstyle{vertex}=[circle,fill=black,minimum size=3pt,inner sep=0pt]
\tikzstyle{edge}=[draw,->]
\tikzstyle{cycle1}=[draw,->,out=130, in=50, loop, distance=40pt]
\tikzstyle{cycle2}=[draw,->,out=100, in=30, loop, distance=40pt]
   
\node[vertex,label=below:$v$] (0) at (0,0) {};
\node[vertex,label=below:$w_1$] (1) at (-1,0) {};

\path (0) edge[cycle1] node[above] {$e$} (0);
\path (0) edge[edge] node {$g_1$} (1);

\end{tikzpicture}\quad\cap\quad
\begin{tikzpicture}[auto,swap]
\tikzstyle{vertex}=[circle,fill=black,minimum size=3pt,inner sep=0pt]
\tikzstyle{edge}=[draw,->]
\tikzstyle{cycle1}=[draw,->,out=130, in=50, loop, distance=40pt]
\tikzstyle{cycle2}=[draw,->,out=100, in=30, loop, distance=40pt]
   
\node[vertex,label=below:$v$] (0) at (0,0) {};
\node[vertex,label=below:$w_2$] (1) at (1,0) {};

\path (0) edge[cycle1] node[above] {$e$} (0);
\path (0) edge[edge] node {$g_2$} (1);

\end{tikzpicture}\quad = \quad
\begin{tikzpicture}[auto,swap]
\tikzstyle{vertex}=[circle,fill=black,minimum size=3pt,inner sep=0pt]
\tikzstyle{edge}=[draw,->]
\tikzstyle{cycle1}=[draw,->,out=130, in=50, loop, distance=40pt]
\tikzstyle{cycle2}=[draw,->,out=100, in=30, loop, distance=40pt]
   
\node[vertex,label=below:$v$] (0) at (0,0) {};

\path (0) edge[cycle1] node[above] {$e$} (0);

\end{tikzpicture}
\]
\[
F^0\cap G^0=\{v\},\quad F^1\cap G^1=\{e\},\quad s_\cap(e)=t_\cap(e)=v.
\]
The intersection is admissible because:
\begin{enumerate}
\item The subset $F^0\setminus(F^0\cap G^0)=\{v,w_1\}\setminus\{v\}=\{w_1\}$ is hereditary and saturated in $F$,
and the subset $G^0\setminus(F^0\cap G^0)=\{v,w_2\}\setminus\{v\}=\{w_2\}$ is hereditary saturated in $G$;
\item $t_F^{-1}(F^0\cap G^0)= t^{-1}_F(\{v\})=\{e\}=F^1\cap G^1$ and
$t^{-1}_G(F^0\cap G^0)= t^{-1}_G(\{v\})=F^1\cap G^1$.
\end{enumerate}
\item 
\[
F\cap G\quad = \quad
\begin{tikzpicture}[auto,swap]
\tikzstyle{vertex}=[circle,fill=black,minimum size=3pt,inner sep=0pt]
\tikzstyle{edge}=[draw,->]
   
\node[vertex,label=below:$w_1$] (0) at (0,0) {};
\node[vertex,label=below:$v$] (1) at (2,0) {}; 

\path (1) edge[edge] node[above] {$\{x_i\}_{i\in\mathbb{N}}$} (0);

\end{tikzpicture}\quad \cap\quad
\begin{tikzpicture}[auto,swap]
\tikzstyle{vertex}=[circle,fill=black,minimum size=3pt,inner sep=0pt]
\tikzstyle{edge}=[draw,->]
   
\node[vertex,label=below:$v$] (0) at (0,0) {};
\node[vertex,label=below:$w_2$] (1) at (2,0) {}; 

\path (0) edge[edge] node[above] {$\{y_i\}_{i\in\mathbb{N}}$} (1);

\end{tikzpicture}\quad =\quad
\begin{tikzpicture}[auto,swap]
\tikzstyle{vertex}=[circle,fill=black,minimum size=3pt,inner sep=0pt]
\tikzstyle{edge}=[draw,->]
   
\node[vertex,label=below:$v$] (1) at (2,0) {}; 


\end{tikzpicture}
\]
is admissible becase:
\begin{enumerate}
\item $F^0\setminus(F^0\cap G^0)=\{w_1,v\}\setminus\{v\}=\{w_1\}$ is hereditary and saturated in $F$,
and 
$$
G^0\setminus(F^0\cap G^0)=\{w_2,v\}\setminus\{v\}=\{w_2\}
$$ 
is hereditary and saturated in $G$;
\item $t^{-1}_F(F^0\cap G^0)= t^{-1}_F(\{v\})=\emptyset=t^{-1}_G(\{v\})=t^{-1}_G(F^0\cap G^0)$ and $F^1\cap G^1=\emptyset$.
\end{enumerate}
\end{enumerate}
\noindent\underline{Counterexamples:}
\begin{enumerate}
\item
\[
F\cap G\quad = \quad
 \begin{tikzpicture}[auto,swap]
\tikzstyle{vertex}=[circle,fill=black,minimum size=3pt,inner sep=0pt]
\tikzstyle{edge}=[draw,->]
   
\node[vertex,label=below:$w_1$] (0) at (0,0) {};
\node[vertex,label=below:$v$] (1) at (1,0) {}; 

\path (1) edge[edge] node[above] {$e_1$} (0);

\end{tikzpicture}\quad \cap\quad
 \begin{tikzpicture}[auto,swap]
\tikzstyle{vertex}=[circle,fill=black,minimum size=3pt,inner sep=0pt]
\tikzstyle{edge}=[draw,->]
   
\node[vertex,label=below:$v$] (0) at (0,0) {};
\node[vertex,label=below:$w_2$] (1) at (1,0) {}; 

\path (0) edge[edge] node[above] {$e_2$} (1);

\end{tikzpicture}\quad =\quad
 \begin{tikzpicture}[auto,swap]
\tikzstyle{vertex}=[circle,fill=black,minimum size=3pt,inner sep=0pt]
\tikzstyle{edge}=[draw,->]
   
\node[vertex,label=below:$v$] (0) at (0,0) {};


\end{tikzpicture}
\]
is \underline{not} admissible because $F^0\setminus(F^0\cap G^0)=\{w_1,v\}\setminus\{v\}=\{w_1\}$ is \underline{not}
saturated. (It is hereditary.)
\item
\[
F\cap G\quad = \quad
 \begin{tikzpicture}[auto,swap]
\tikzstyle{vertex}=[circle,fill=black,minimum size=3pt,inner sep=0pt]
\tikzstyle{edge}=[draw,->]
   
\node[vertex,label=below:$w_1$] (0) at (0,0) {};
\node[vertex,label=below:$v$] (1) at (1,0) {}; 

\path (0) edge[edge] node[above] {$e_1$} (1);

\end{tikzpicture}\quad \cap \quad
 \begin{tikzpicture}[auto,swap]
\tikzstyle{vertex}=[circle,fill=black,minimum size=3pt,inner sep=0pt]
\tikzstyle{edge}=[draw,->]
   
\node[vertex,label=below:$v$] (0) at (0,0) {};
\node[vertex,label=below:$w_2$] (1) at (1,0) {}; 

\path (1) edge[edge] node[above] {$e_2$} (0);

\end{tikzpicture}\quad = \quad
 \begin{tikzpicture}[auto,swap]
\tikzstyle{vertex}=[circle,fill=black,minimum size=3pt,inner sep=0pt]
\tikzstyle{edge}=[draw,->]
   
\node[vertex,label=below:$v$] (0) at (0,0) {};


\end{tikzpicture}
\]
is \underline{not} admissible because $F^0\setminus(F^0\cap G^0)=\{w_1,v\}\setminus\{v\}=\{w_1\}$
is \underline{not} hereditary. (It is saturated.)
\item
\[
F\cap G\quad=\quad
 \begin{tikzpicture}[auto,swap]
\tikzstyle{vertex}=[circle,fill=black,minimum size=3pt,inner sep=0pt]
\tikzstyle{edge}=[draw,->]
   
\node[vertex,label=below:$v$] (0) at (0,0) {};
\node[vertex,label=below:$w$] (1) at (1,0) {}; 

\path (0) edge[edge,bend left] node[above] {$e_1$} (1);
\path (0) edge[edge] node[below] {$e_2$} (1);

\end{tikzpicture}\quad\cap\quad
 \begin{tikzpicture}[auto,swap]
\tikzstyle{vertex}=[circle,fill=black,minimum size=3pt,inner sep=0pt]
\tikzstyle{edge}=[draw,->]
   
\node[vertex,label=below:$v$] (0) at (0,0) {};
\node[vertex,label=below:$w$] (1) at (1,0) {}; 

\path (0) edge[edge,bend right] node[below] {$e_3$} (1);
\path (0) edge[edge] node[above] {$e_2$} (1);

\end{tikzpicture}\quad=\quad 
 \begin{tikzpicture}[auto,swap]
\tikzstyle{vertex}=[circle,fill=black,minimum size=3pt,inner sep=0pt]
\tikzstyle{edge}=[draw,->]
   
\node[vertex,label=below:$v$] (0) at (0,0) {};
\node[vertex,label=below:$w$] (1) at (1,0) {}; 

\path (0) edge[edge] node[above] {$e_2$} (1);

\end{tikzpicture}
\]
is \underline{not} admissible because
\[
t^{-1}_F(F^0\cap G^0)= t^{-1}_F(\{v,w\})
=\{e_1,e_2\}\neq\{e_2\}=F^1\cap G^1.
\]
However, both $F^0\setminus(F^0\cap G^0)$ and $G^0\setminus(F^0\cap G^0)$ are empty,
so they are hereditary and saturated.
\end{enumerate}

\noindent\underline{The union of graphs:}

Let $F$ and $G$ be graphs. Again, assume that $s_F$ and $t_F$ agree, respectively, with $s_G$ and $t_G$ on $F^1\cap G^1$.
Then we can define the \underline{union} graph
\[
F\cup G:=(F^0\cup G^0,F^1\cup G^1,s_\cup,t_\cup),
\]
where $s_\cup,t_\cup:F^1\cup G^1\to F^0\cup G^0$,
\[
\forall\;x\in F^1\cup G^1:\quad s_\cup=\begin{cases}s_F(x) & \text{for }x\in F^1\\s_G(x) & \text{for } x\in G^1\end{cases},
\]
\[
\forall\;x\in F^1\cup G^1:\quad t_\cup=\begin{cases}t_F(x) & \text{for }x\in F^1\\t_G(x) & \text{for } x\in G^1\end{cases}.
\]
Note that $F$ and $G$ are subgraphs of $F\cup G$. We say that the union is \underline{admissible}
iff both the inclusions $F\hookrightarrow F\cup G$ and $G\hookrightarrow F\cup G$ are admissible.

\begin{lemma}
Let $F$ and $G$ be graphs whose source and target maps agree, respectively on $F^1\cap G^1$.
Then, if the intersection graph $F\cap G$ is admissible, so is the union graph $F\cup G$.
\end{lemma}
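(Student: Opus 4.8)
The plan is to exploit the symmetry of the statement: since the roles of $F$ and $G$ are interchangeable, it suffices to prove that the inclusion $F\hookrightarrow F\cup G$ is admissible, and the admissibility of $G\hookrightarrow F\cup G$ then follows by swapping $F$ and $G$ throughout. Thus I must verify the two defining conditions of an admissible inclusion for $F\hookrightarrow F\cup G$, namely that $(F\cup G)^0\setminus F^0=G^0\setminus F^0$ is hereditary and saturated in $F\cup G$, and that $t_\cup^{-1}(F^0)=F^1$. Throughout I will use the two hypotheses coming from the admissibility of the intersection relative to $G$: the set $G^0\setminus(F^0\cap G^0)$ is hereditary and saturated in $G$, and $t_G^{-1}(F^0\cap G^0)=F^1\cap G^1$. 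The elementary identity $G^0\setminus F^0=G^0\setminus(F^0\cap G^0)$ lets me transfer these.

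First I would dispatch the edge-condition $t_\cup^{-1}(F^0)=F^1$. The inclusion $F^1\subseteq t_\cup^{-1}(F^0)$ is immediate since $t_\cup$ restricts to $t_F$ on $F^1$. For the reverse, any $e\in t_\cup^{-1}(F^0)$ that is not already in $F^1$ lies in $G^1$ with $t_\cup(e)=t_G(e)\in F^0$; as $t_G(e)\in G^0$ as well, we get $t_G(e)\in F^0\cap G^0$, so $e\in t_G^{-1}(F^0\cap G^0)=F^1\cap G^1\subseteq F^1$, a contradiction. Next I would check that $G^0\setminus F^0$ is hereditary in $F\cup G$: an edge of $F\cup G$ emitted by $v\in G^0\setminus F^0$ cannot lie in $F^1$ (its source $v$ would then lie in $F^0$), so it is a $G$-edge, and heredity of $G^0\setminus(F^0\cap G^0)$ in $G$ forces its target into $G^0\setminus(F^0\cap G^0)=G^0\setminus F^0$.

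The main obstacle is saturation, where one must argue across the two graphs simultaneously. I would first record the identity $(F\cup G)^0\setminus(G^0\setminus F^0)=F^0$, so that a potential witness to the failure of saturation is a vertex $v\in F^0$ with $0<|s_\cup^{-1}(v)|<\infty$ whose $t_\cup$-targets all lie in $G^0\setminus F^0$. The key point is that such a $v$ emits no edge in $F^1$: any $F$-edge from $v$ has target in $F^0$, which is disjoint from $G^0\setminus F^0$. Hence every edge emitted by $v$ is a $G$-edge, which first forces $v\in G^0$, so $v\in F^0\cap G^0$, and second gives $s_\cup^{-1}(v)=s_G^{-1}(v)$ (here the agreement of the source maps on $F^1\cap G^1$ is used to see that no common edge is emitted by $v$). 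But then $v$ is a vertex of $G$ lying outside $G^0\setminus(F^0\cap G^0)$ that emits a nonempty finite set of $G$-edges all landing in $G^0\setminus(F^0\cap G^0)$, contradicting the saturation of that set in $G$. This contradiction establishes saturation and completes the verification for $F\hookrightarrow F\cup G$; the symmetric argument finishes the proof.
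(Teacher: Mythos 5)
Your proof is correct and takes essentially the same route as the paper's: both reduce everything to the admissibility of $F\cap G\hookrightarrow G$ via the identity $G^0\setminus F^0=G^0\setminus(F^0\cap G^0)$, establish the edge condition by showing $t_\cup^{-1}(F^0)\setminus F^1$ lands in $t_G^{-1}(F^0\cap G^0)=F^1\cap G^1$, and prove saturation by observing that a violating vertex would have to lie in $F^0\cap G^0$ and emit only $G$-edges, contradicting saturation in $G$. The only differences are organizational (you verify heredity edge-by-edge per the definition and frame saturation as a contradiction with a witness vertex, where the paper traces paths and eliminates cases), so no substantive divergence.
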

\begin{proof}
\begin{enumerate}
\item $(F^0\cup G^0)\setminus F^0=G^0\setminus F^0=G^0\setminus(F^0\cap G^0)$.
Since $F\cap G\hookrightarrow G$ is admissible, $G^0\setminus(F^0\cap G^0)$ is hereditary
and saturated in $G$. 

We need to show that $G^0\setminus(F^0\cap G^0)$ is hereditary
and saturated in $F\cup G$. To this end, consider $p:=(e_1,\ldots,e_n)\in FP(F\cup G)$ such that
$s_\cup(p)\in G^0\setminus(F^0\cap G^0)$. Then $s_\cup(p)=s_\cup(e_1)\notin F^0$,
so $e_1\notin F^1$, whence $e_1\in G^1$, so $s_\cup(e_1)=s_G(e_1)$.
As $G^0\setminus(F^0\cap G^0)$ is hereditary in $G$, $s_\cup(e_2)=t_\cup(e_1)=t_G(e_1)\in G^0\setminus(F^0\cap G^0)$.
Repeating this reasoning for all $\{e_i\}_{i=2}^n$, we conclude that 
$t_\cup(p)=t_\cup(e_n)=t_G(e_n)\in G^0\setminus(F^0\cap G^0)$, so $G^0\setminus(F^0\cap G^0)$ is hereditary in $F\cup G$.

Next, to establish that $G^0\setminus(F^0\cap G^0)$ is saturated in $F\cup G$, we consider all elements in $F^0\cup G^0$
that emit an edge. Note first that any edge ending at a vertex in $G^0\setminus(F^0\cap G^0)$ must begin at a vertex in $G^0$,
so we only need to consider vertices in $F^0\cap G^0$:
\[
\begin{tikzpicture}[auto,swap]
\tikzstyle{vertex}=[circle,fill=black,minimum size=3pt,inner sep=0pt]
\tikzstyle{edge}=[draw,->]
   
\node[vertex,label=below:$F^0\cap G^0$] (1) at (1,0) {}; 
\node[vertex] (2) at (2.5,0) {}; 

\path (1) edge[edge] node {} (2);

\draw (0,0) circle (1.85) (-0.5,1) node {$F^0\setminus G^0$};
\draw (2,0) circle (1.85) (2.5,1) node {$G^0\setminus F^0$};

\end{tikzpicture}
\]
Also, if $v$ is a sink in $G$ but not in $F\cup G$, it emits an edge ending at a vertex outside of $G^0\setminus(F^0\cap G^0)$,
so we can disregard it. Furthermore, since $s^{-1}_G(\{v\})\subseteq s^{-1}_\cup(\{v\})$, the finiteness of $s^{-1}_\cup(\{v\})$
implies the finiteness of $s^{-1}_G(\{v\})$. Hence we only need to consider vertices in $F^0\cap G^0$ that are no sinks in $G$,
that are finite emitters in $G$, and that emit all their edges to $G^0\setminus(F^0\cap G^0)$. For all such vertices, we have
\[
\{t_\cup(e)~|~e\in s^{-1}_\cup(\{v\})\}=\{t_G(e)~|~e\in s^{-1}_G(\{v\})\}
\]
because $t_\cup(e)\in G^0\setminus(F^0\cap G^0)$ implies that $e\in G^1$. Finally, such vertices do not exist by the
saturation property of $G^0\setminus(F^0\cap G^0)$ in $G$, so $G^0\setminus(F^0\cap G^0)$ is saturated in $F\cup G$.

A symmetric argument proves that $F^0\setminus(F^0\cap G^0)$ is hereditary and staurated in $F\cup G$.
\item 
First, taking an advantage of the admissibility of $(F\cap G)\subseteq G$, we compute
\begin{align*}
t^{-1}_\cup(F^0)\setminus F^1=(G^1\setminus F^1)\cap t^{-1}_G(F^0\cap G^0)=(G^1\setminus F^1)\cap (F^1\cap G^1)=\emptyset.
\end{align*}
Therefore, as $F^1\subseteq t^{-1}_\cup(F^0)$, we conclude that $F^1= t^{-1}_\cup(F^0)$.
Much in the same way, one shows that $t^{-1}_\cup(G^0)=G^1$. 
\end{enumerate}
\end{proof}
\noindent\underline{Remark:}
The opposite implication:
\[
F\hookrightarrow F\cup G\hookleftarrow G\quad \text{is admissible}\quad \Rightarrow\quad
F\hookleftarrow F\cap G\hookrightarrow G\quad \text{is admissible},
\]
is \underline{not} true:
\[
F:=\quad
\begin{tikzpicture}[auto,swap]
\tikzstyle{vertex}=[circle,fill=black,minimum size=3pt,inner sep=0pt]
\tikzstyle{edge}=[draw,->]
   
\node[vertex,label=below:$w_1$] (0) at (0,0) {};
\node[vertex,label=below:$v$] (1) at (2,0) {}; 

\path (1) edge[edge] node[above] {$\{x_i\}_{i\in\mathbb{N}}$} (0);

\end{tikzpicture}\qquad\qquad
G:=\quad
\begin{tikzpicture}[auto,swap]
\tikzstyle{vertex}=[circle,fill=black,minimum size=3pt,inner sep=0pt]
\tikzstyle{edge}=[draw,->]
   
\node[vertex,label=below:$v$] (0) at (0,0) {};
\node[vertex,label=below:$w_2$] (1) at (1,0) {}; 

\path (0) edge[edge] node[above] {$e$} (1);

\end{tikzpicture}
\]
\[
F\cap G=(\{v\},\emptyset,\emptyset,\emptyset),\qquad F\cup G=(\{w_1,v,w_2\},\{x_i\}_{i\in\mathbb{N}}\cup\{e\},s_\cup,t_\cup).
\]

Let us check first that $F\hookrightarrow F\cup G$ is admissible. The set 
\[
(F^0\cup G^0)\setminus F^0=G^0\setminus(F^0\cap G^0)=\{v,w_2\}\setminus\{v\}=\{w_2\}
\]
is hereditary in $F\cup G$ because $w_2$ is a sink in $F\cup G$. It is also saturated
in $F\cup G$ because $w_1$ is a sink in $F\cup G$ and $v$ is an infinite emitter in $F\cup G$.

Next, 
\[
t^{-1}_\cup(F^0)= t^{-1}_\cup(\{v,w_1\})
=\{x_i\}_{i\in\mathbb{N}}=F^1.
\]
Hence $F\hookrightarrow F\cup G$ is admissible.
For the inclusion $G\hookrightarrow F\cup G$, consider the set
\[
(F^0\cup G^0)\setminus G^0=F^0\setminus(F^0\cap G^0)=
\{w_1,v\}\setminus\{v\}=\{w_1\}.
\]
It is hereditary in $F\cup G$ because $w_1$ is a sink in $F\cup G$. It is also saturated
in $F\cup G$ beacuse $w_2$ is a sink in $F\cup G$.
Finally,
\[
t^{-1}_\cup(G^0)=t^{-1}_\cup(\{v,w_2\})=\{e\}=G^1.
\]
Thus we have shown that the union $F\cup G$ is admissible. On the other hand,
the intersection $F\cap G$ is \underline{not} admissible because the set
$G^0\setminus(F^0\cap G^0)=\{w_2\}$ is not saturated in $G$:
\[
v\notin\{w_2\}\quad\text{and}\quad \{t_G(x)~|~x\in s^{-1}_G(\{v\})\}=\{w_2\}.
\]

\noindent\underline{Elementary observations:}
\begin{enumerate}
\item The properties of being hereditary and saturated are \underline{not} preserved
by the inclusion of graphs:
\begin{enumerate}
\item
\begin{tikzpicture}[auto,swap]
\tikzstyle{vertex}=[circle,fill=black,minimum size=3pt,inner sep=0pt]
\tikzstyle{edge}=[draw,->]
   
\node[vertex,label=below:$v$] (0) at (0,0) {};
\node[vertex,label=below:$w$] (3) at (1,0) {}; 

\path (0) edge[edge] node[above] {$e$} (3);

\end{tikzpicture}\qquad
\begin{tikzpicture}[auto,swap]
\tikzstyle{vertex}=[circle,fill=black,minimum size=3pt,inner sep=0pt]
\tikzstyle{edge}=[draw,->]
   
\node[vertex,label=below:$v$] (0) at (0,0) {};
\node[vertex,label=below:$w$] (1) at (1,0) {}; 
\node[vertex,label=below:$w'$] (2) at (2,0) {}; 

\path (0) edge[edge] node[above] {$e$} (1);
\path (1) edge[edge] node[above] {$e'$} (2);

\end{tikzpicture}

\quad$F$\qquad\quad  $\subseteq$ \quad\qquad  $G$

$\{w\}$ is hereditary in $F$ but not in $G$.
\item
\begin{tikzpicture}[auto,swap]
\tikzstyle{vertex}=[circle,fill=black,minimum size=3pt,inner sep=0pt]
\tikzstyle{edge}=[draw,->]
   
\node[vertex,label=below:$v$] (0) at (0,0) {};
\node[vertex,label=below:$w$] (3) at (1,0) {}; 

\path (0) edge[edge] node[above] {$e$} (3);

\end{tikzpicture}\qquad
\begin{tikzpicture}[auto,swap]
\tikzstyle{vertex}=[circle,fill=black,minimum size=3pt,inner sep=0pt]
\tikzstyle{edge}=[draw,->]
   
\node[vertex,label=below:$w'$] (0) at (0,0) {};
\node[vertex,label=below:$v$] (1) at (1,0) {}; 
\node[vertex,label=below:$w$] (2) at (2,0) {}; 

\path (0) edge[edge] node[above] {$e'$} (1);
\path (1) edge[edge] node[above] {$e$} (2);

\end{tikzpicture}

\quad$F$\qquad\quad  $\subseteq$ \quad\qquad  $G$

$\{v\}$ is saturated in $F$ but not in $G$.
\end{enumerate}
However, both properties are preserved by special inclusions $F\subseteq F\cup G$
for the special set $F^0\setminus(F^0\cap G^0)$
because there are no edges like this:
\[
\begin{tikzpicture}[auto,swap]
\tikzstyle{vertex}=[circle,fill=black,minimum size=3pt,inner sep=0pt]
\tikzstyle{edge}=[draw,->]
   
\node[vertex] (0) at (0,0) {};
\node[vertex] (2) at (2,0) {}; 

\path (0) edge[edge] node {} (2);

\draw (0,0) circle (1.25) (0,1) node {$F$};
\draw (2,0) circle (1.25) (2,1) node {$G$};

\end{tikzpicture}
\]
\item Restriction of graphs to subgraphs does \underline{not} preserve the saturation property
even in the special case of $F^0\setminus(F^0\cap G^0)$ in $F\subseteq F\cup G$.
However, it always preserves the property of being hereditary: if $H\subseteq F^0$, $F\subseteq G$,
is not hereditary in $F$, it is not hereditary in $G$. Indeed, if there is a path starting at $v\in H$ and ending
at $w\in F^0\setminus H$, then it is also a path starting at $v\in H$ and ending at $w\in G^0\setminus H$.
\end{enumerate}

\noindent\underline{Extended graph:}

Let $E=(E^0,E^1,s_E,t_E)$ be a graph. The \underline{extended graph} $\bar{E}:=(\bar{E}^0,\bar{E}^1,s_{\bar{E}},t_{\bar{E}})$
of the graph $E$ is defined as follows
\[
\bar{E}^0:=E^0,\quad \bar{E}^1:=E^1\sqcup (E^1)^*,\quad (E^1)^*:=\{e^*~|~e\in E^1\},
\]
\[
\forall\; e\in E^1:\quad s_{\bar{E}}(e):=s_E(e),\quad t_{\bar{E}}(e):=t_E(e),
\]
\[
\forall\; e^*\in (E^1)^*:\quad s_{\bar{E}}(e^*):=t_E(e),\quad t_{\bar{E}}(e^*):=s_E(e).
\]

Thus $E$ is a subgraph of $\bar{E}$.

\noindent\underline{Examples:}
\begin{enumerate}
\item $E=$\quad\begin{tikzpicture}[auto,swap]
\tikzstyle{vertex}=[circle,fill=black,minimum size=3pt,inner sep=0pt]
\tikzstyle{edge}=[draw,->]
   
\node[vertex] (0) at (0,0) {};
\node[vertex] (3) at (1,0) {}; 

\path (0) edge[edge] node {} (3);

\end{tikzpicture}\qquad $\bar{E}=$\quad
\begin{tikzpicture}[auto,swap]
\tikzstyle{vertex}=[circle,fill=black,minimum size=3pt,inner sep=0pt]
\tikzstyle{edge}=[draw,->]
\tikzstyle{cycle1}=[draw,->,out=130, in=50, loop, distance=40pt]
\tikzstyle{cycle2}=[draw,->,out=100, in=30, loop, distance=40pt]
   
\node[vertex] (0) at (0,0) {};
\node[vertex] (3) at (1,0) {}; 

\path (0) edge[edge,bend right] node[below] {} (3);
\path (3) edge[edge,bend right,dashed] node {} (0);

\end{tikzpicture}
\item
$E=$\quad
\begin{tikzpicture}[auto,swap]
\tikzstyle{vertex}=[circle,fill=black,minimum size=3pt,inner sep=0pt]
\tikzstyle{edge}=[draw,->]
\tikzstyle{cycle1}=[draw,->,out=130, in=50, loop, distance=40pt]
\tikzstyle{cycle2}=[draw,->,out=130, in=50, loop, distance=60pt]
   
\node[vertex] (0) at (0,0) {};

\path (0) edge[cycle1] node {} (0);

\end{tikzpicture}\qquad
$\bar{E}=$\quad
\begin{tikzpicture}[auto,swap]
\tikzstyle{vertex}=[circle,fill=black,minimum size=3pt,inner sep=0pt]
\tikzstyle{edge}=[draw,->]
\tikzstyle{cycle1}=[draw,->,out=130, in=50, loop, distance=40pt]
\tikzstyle{cycle2}=[draw,->,out=130, in=50, loop, distance=60pt]
   
\node[vertex] (0) at (0,0) {};

\path (0) edge[cycle1] node {} (0);
\path (0) edge[cycle2,dashed] node {} (0);

\end{tikzpicture}
\item
$E=$\quad
\begin{tikzpicture}[auto,swap]
\tikzstyle{vertex}=[circle,fill=black,minimum size=3pt,inner sep=0pt]
\tikzstyle{edge}=[draw,->]
\tikzstyle{cycle1}=[draw,->,out=130, in=50, loop, distance=40pt]
\tikzstyle{cycle2}=[draw,->,out=100, in=30, loop, distance=40pt]
   
\node[vertex] (0) at (0,0) {};
\node[vertex] (3) at (1,0) {}; 

\path (0) edge[edge,bend left] node {} (3);
\path (3) edge[edge,bend left] node {} (0);

\end{tikzpicture}\qquad
$\bar{E}=$\quad
\begin{tikzpicture}[auto,swap]
\tikzstyle{vertex}=[circle,fill=black,minimum size=3pt,inner sep=0pt]
\tikzstyle{edge}=[draw,->]
\tikzstyle{cycle1}=[draw,->,out=130, in=50, loop, distance=40pt]
\tikzstyle{cycle2}=[draw,->,out=100, in=30, loop, distance=40pt]
   
\node[vertex] (0) at (0,0) {};
\node[vertex] (3) at (1,0) {}; 

\path (0) edge[edge,bend left] node {} (3);
\path (0) edge[edge,bend left=60,dashed] node {} (3);
\path (3) edge[edge,bend left] node {} (0);
\path (3) edge[edge,bend left=60,dashed] node {} (0);

\end{tikzpicture}
\end{enumerate}

\section{Graph algebras}

\subsection{Path algebras}
Let $V$ be any vector space over a field $k$. To endow $V$ with an algebra structure, we have
to define the multiplication map $V\times V\overset{m}{\mapsto}V$, which is a bilinear map
satisfying some conditions. Any such a map is uniquely determined by its value on pairs of basis elements $(e_i,e_j)$,
and any assignment $(e_i,e_j)\mapsto v_{ij}\in V$ defines a bilinear map from $V\times V$ to $V$.
Now, let $E$ be any graph, and $FP(E)$ the set of all its finite paths. Consider the vector space
\[
kE:=\{f\in{\rm Map}(FP(E),k)~|~f(p)\neq 0\text{ for finitely many } p\in FP(E)\},
\]
where the addition and scalar multiplication are pointwise. Then the set of functions $\{\chi_p\}_{p\in FP(E)}$
given by
\[
\chi_p(q)=\begin{cases}1 &\text{for }p=q\\0 & \text{otherwise}\end{cases}
\]
is a linear basis of $kE$. Indeed, let $\{q_1,\ldots,q_n\}$ be the support of $f\in kE$. Then
\[
f=\sum_{i=1}^nf(q_i)\chi_{q_i}
\]
because, $\forall\;p\in FP(E)$:
\begin{align*}
\left(\sum_{i=1}^nf(q_i)\chi_{q_i}\right)(p)&=\sum_{i=1}^nf(q_i)\chi_{q_i}(p)\\
&=\begin{cases}0 & \text{if }p\notin\{q_1,\ldots,q_n\}\\f(q_i) & \text{if }p=q_i\text{ for }i\in\{1,\ldots,n\}\end{cases}\\
&=f(p).
\end{align*}
Hence $\{\chi_p\}_{p\in FP(E)}$ spans $kE$.

To see the linear independence, take any finite subset $\{\chi_{p_1},\ldots,\chi_{p_m}\}\subseteq\{\chi_p\}_{p\in FP(E)}$,
and suppose that $\sum_{i=1}^m\alpha_i\chi_{p_i}=0$. Then $\forall\; j\in\{1,\ldots,m\}$
\[
0=\left(\sum_{i=1}^m\alpha_i\chi_{p_i}\right)(p_j)=\sum_{i=1}^m\alpha_i\chi_{p_i}(p_j)=\alpha_j\,.
\]
Thus we have shown that $\{\chi_p\}_{p\in FP(E)}$ is a basis of $kE$. Now we will use $\{\chi_p\}_{p\in FP(E)}$
to define a bilinear map:
\[
m:kE\times kE\longto kE,\qquad m(\chi_p,\chi_q):=\begin{cases}\chi_{pq} & \text{if }t(p)=s(q)\\0 & \text{otherwise}\end{cases}.
\]

\begin{proposition}
The bilinear map $m:kE\times kE\to kE$ defines an algebra structure on $kE$.
\end{proposition}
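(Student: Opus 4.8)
The plan is to notice that, by the remark opening this subsection, any prescription of values on pairs of basis vectors extends to a (unique) bilinear map; hence $m$ is bilinear by construction, and the only algebra axiom left to verify is associativity. I would explicitly flag that no unit is being asserted: the natural candidate $\sum_{v\in E^0}\chi_v$ belongs to $kE$ only when $E^0$ is finite, so in general $kE$ is a \emph{non-unital} associative algebra carrying the local units $\{\chi_v\}_{v\in E^0}$. Using bilinearity of $m$ in each slot, the identity $m(m(x,y),z)=m(x,m(y,z))$ for all $x,y,z\in kE$ reduces to its verification on arbitrary triples of basis vectors, so it suffices to prove
\[
m(m(\chi_p,\chi_q),\chi_r)=m(\chi_p,m(\chi_q,\chi_r))\qquad\text{for all }p,q,r\in FP(E).
\]

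The key combinatorial input is that concatenation of finite paths is associative wherever it is defined. First I would record that if $t(p)=s(q)$, then $pq:=(e_1,\ldots,e_m,f_1,\ldots,f_n)$ is again a finite path with $s(pq)=s(p)$ and $t(pq)=t(q)$ (here $p=(e_1,\ldots,e_m)$, $q=(f_1,\ldots,f_n)$), with the convention that a vertex $v$, viewed as a length-$0$ path, satisfies $vq=q$ when $s(q)=v$ and $pv=p$ when $t(p)=v$. Consequently, whenever $t(p)=s(q)$ and $t(q)=s(r)$, both double concatenations are defined and $(pq)r=p(qr)$ as tuples. This is immediate from the definition of a path as a tuple of composable edges, but it is precisely what makes the triple product unambiguous.

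With this in hand I would dispose of the \emph{fully composable} case: if $t(p)=s(q)$ and $t(q)=s(r)$, then $t(pq)=t(q)=s(r)$ and $s(qr)=s(q)=t(p)$, so both sides equal $\chi_{pqr}$ by associativity of concatenation. The remaining work is the bookkeeping of the vanishing cases, which I expect to be the only place demanding care. If $t(p)\neq s(q)$, the left side vanishes since $\chi_p\chi_q=0$; on the right, either $t(q)\neq s(r)$ (so $\chi_q\chi_r=0$), or $t(q)=s(r)$, in which case $s(qr)=s(q)\neq t(p)$ forces $m(\chi_p,\chi_{qr})=0$, so the right side vanishes as well. Symmetrically, if $t(p)=s(q)$ but $t(q)\neq s(r)$, then $t(pq)=t(q)\neq s(r)$ annihilates the left side while $\chi_q\chi_r=0$ annihilates the right side. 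Thus the two sides agree in every case, associativity holds on the basis, and by bilinearity it holds on all of $kE$; therefore $m$ endows $kE$ with the structure of an associative algebra.
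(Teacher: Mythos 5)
Your proof is correct and follows essentially the same route as the paper: reduce associativity to basis elements $\chi_p$ via bilinearity, then check the fully composable case and the vanishing cases agree, which is exactly the paper's case analysis (your treatment just makes explicit the facts $t(pq)=t(q)$, $s(qr)=s(q)$, and tuple-level associativity that the paper uses implicitly). The added remark on non-unitality is consistent with the paper's later observation that $kE$ is unital iff $E^0$ is finite.
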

\begin{proof}
To check the associativity of $m$ it suffices to verify it on basis elements:
\begin{align*}
m(m(\chi_p,\chi_q),\chi_x)&
=\begin{cases}
m(\chi_{pq},\chi_x) & \text{if $t(p)=s(q)$}\\
0 & \text{if $t(p)\neq s(q)$}\end{cases}\\
&=\begin{cases}
\chi_{pqx} & \text{if $t(q)=s(x)$ and $t(p)=s(q)$}\\
0 & \text{if $t(q)\neq s(x)$ and $t(p)=s(q)$}\\
0 & \text{if $t(p)\neq s(q)$}\end{cases},
\end{align*}
\begin{align*}
m(\chi_p,m(\chi_q,\chi_x))&
=\begin{cases}
m(\chi_p,\chi_{qx}) & \text{if $t(q)=s(x)$}\\
0 & \text{if $t(q)\neq s(x)$}\end{cases}\\
&=\begin{cases}
\chi_{pqx} & \text{if $t(p)=s(q)$ and $t(q)=s(x)$}\\
0 & \text{if $t(p)\neq s(q)$ and $t(q)=s(x)$}\\
0 & \text{if $t(q)\neq s(x)$}\end{cases}.
\end{align*}

Hence $m(m(\chi_p,\chi_q),\chi_x)=m(\chi_p,m(\chi_q,\chi_x))$ for any $p,q,x\in FP(E)$.
(The distributivity follows from the bilinearity of $m$.)
\end{proof}
\begin{definition}
Let $E$ be a graph. The above constructed algebra $(kE,+,0,m)$ is called the \underline{path algebra} of $E$.
\end{definition}

\noindent\underline{Elementary facts:}

The path algebra $kE$ of a graph $E$ is
\begin{enumerate}
\item finite dimensional $\iff$ $E$ is finite and acyclic (no loops),
\item unital $\iff$ $E^0$ is finite,
\item commutative $\iff$ $E^1=\emptyset$ or each edge is a loop starting/ending at a different vertex.
\end{enumerate}

\subsection{Leavitt path algebras}
\begin{definition}
Let $A$ be a $k$-algebra and $S$ a subset of $A$. The \underline{ideal generated} by $S$ is the set of all
finite sums
\[
\sum_{i\in F}x_is_iy_i\,,
\]
where $s_i\in S$ and $x_i\,,y_i\in A$ for all $i\in F$.
\end{definition}

\begin{definition}
Let $E$ be a graph and $k$ be a field. The \underline{Leavitt path algebra} $L_k(E)$ of $E$
is the path algebra $k\bar{E}$ of the extended graph $\bar{E}$ divided by the ideal generated
by the following elements:
\begin{enumerate}
\item $\{\chi_{e^*}\chi_f-\delta_{ef}\chi_{t(e)}~|~e,f\in E^1\}$,
\item $\{\sum_{e\in s^{-1}(v)}\chi_{e}\chi_{e^*}-\chi_v~|~v\in E^0,~0<|s^{-1}(v)|<\infty\}$.
\end{enumerate}
\end{definition}

\noindent\underline{Examples:}
\begin{enumerate}
\item Matrix algebras:

$E=$\quad
\begin{tikzpicture}[auto,swap]
\tikzstyle{vertex}=[circle,fill=black,minimum size=3pt,inner sep=0pt]
\tikzstyle{edge}=[draw,->]
\tikzstyle{cycle1}=[draw,->,out=130, in=50, loop, distance=40pt]
\tikzstyle{cycle2}=[draw,->,out=100, in=30, loop, distance=40pt]
   
\node[vertex,label=below:$v_1$] (0) at (0,0) {};
\node[vertex,label=below:$v_2$] (3) at (0.5,0) {}; 
\node (2) at (1,0) {$\ldots$};
\node[vertex] (4) at (1.5,0) {};
\node[vertex,label=below:$v_n$] (1) at (2,0) {};

\path (0) edge[edge] node {} (3);
\path (4) edge[edge] node {} (1);

\end{tikzpicture}\qquad $L_k(E)=M_n(k)$

$E=$\quad
\begin{tikzpicture}[auto,swap]
\tikzstyle{vertex}=[circle,fill=black,minimum size=3pt,inner sep=0pt]
\tikzstyle{edge}=[draw,->]
\tikzstyle{cycle1}=[draw,->,out=130, in=50, loop, distance=40pt]
\tikzstyle{cycle2}=[draw,->,out=100, in=30, loop, distance=40pt]
   
\node[vertex] (0) at (0,0) {};
\node[vertex] (3) at (0.5,0) {}; 
\node (2) at (1,0) {$\ldots$};
\node[vertex] (4) at (1.5,0) {};
\node[vertex] (1) at (2,0) {};
\node (5) at (2.5,0) {$\ldots$};

\path (0) edge[edge] node {} (3);
\path (4) edge[edge] node {} (1);

\end{tikzpicture}\qquad $L_k(E)=M_\infty(k)=\bigcup_{n\in\mathbb{N}\setminus\{0\}}M_n(k)$
\[
M_n(k)\ni M\longmapsto 
\begin{bmatrix} M & \begin{matrix} 0 \\ \vdots \end{matrix} \\  \begin{matrix} 0 & \ldots \end{matrix} & 0\end{bmatrix}\in M_{n+1}(k)
\]
(arbitrary size finite matrices).
   


\item Laurent polynomial algebra:
$E=$\quad
\begin{tikzpicture}[auto,swap]
\tikzstyle{vertex}=[circle,fill=black,minimum size=3pt,inner sep=0pt]
\tikzstyle{edge}=[draw,->]
\tikzstyle{cycle1}=[draw,->,out=130, in=50, loop, distance=40pt]
\tikzstyle{cycle2}=[draw,->,out=130, in=50, loop, distance=70pt]
   
\node[vertex] (0) at (0,0) {};

\path (0) edge[cycle1] node {} (0);

\end{tikzpicture}\qquad $L_k(E)=k[\mathbb{Z}]$.
\item Leavitt algebras:
$E=$\quad
\begin{tikzpicture}[auto,swap]
\tikzstyle{vertex}=[circle,fill=black,minimum size=3pt,inner sep=0pt]
\tikzstyle{edge}=[draw,->]
\tikzstyle{cycle1}=[draw,->,out=130, in=50, loop, distance=40pt]
\tikzstyle{cycle2}=[draw,->,out=130, in=50, loop, distance=70pt]
   
\node[vertex] (0) at (0,0) {};
\node (1) at (0,1.25) {\vdots};

\path (0) edge[cycle1] node {$e_1$} (0);
\path (0) edge[cycle2] node[above] {$e_n$} (0);

\end{tikzpicture}\qquad $L_k(E)=L_k(1,n)$

($\sum_{i=1}^n\chi_{e_i}\chi_{e_i^*}$ $\Rightarrow$ $L_k(1,n)^n\cong L_k(1,n)$ as modules.)
\item $E=$\quad
\begin{tikzpicture}[auto,swap]
\tikzstyle{vertex}=[circle,fill=black,minimum size=3pt,inner sep=0pt]
\tikzstyle{edge}=[draw,->]
\tikzstyle{cycle1}=[draw,->,out=130, in=50, loop, distance=40pt]
\tikzstyle{cycle2}=[draw,->,out=100, in=30, loop, distance=40pt]
   
\node[vertex] (0) at (0,0) {};
\node[vertex] (1) at (0,1) {};
\node[vertex] (2) at (1,1.5) {};
\node (3) at (1.75,1.3) {$\ddots$};
\node (4) at (2,0.5) {$\vdots$};
\node[vertex] (5) at (1,-0.5) {};
\node (6) at (1.75,-0.2) {\reflectbox{$\ddots$}};

\path (0) edge[edge] node {} (1);
\path (1) edge[edge] node {} (2);
\path (5) edge[edge] node {} (0);

\end{tikzpicture}\qquad$L_k(E)\cong M_n(k)[\mathbb{Z}]\cong M_n(k[\mathbb{Z}])$

(Laurent polynomials with matrix coefficients or matrices over Laurent polynomials.)
\end{enumerate}

\begin{lemma}
Let $E\hookrightarrow F$ be an admissible inclusion of row-finite (no infinite emitters) graphs
and $k$ be a field. Then the formulas
\begin{gather*}
\chi_v\longmapsto\begin{cases}\chi_v & \text{if } v\in E^0
\\ 0 & \text{if }v\in F^0\setminus E^0\end{cases},
\\
\chi_e\longmapsto\begin{cases}\chi_e & \text{if } e\in E^1
\\ 0 & \text{if }e\in F^1\setminus E^1\end{cases},
\\
\chi_{e^*}\longmapsto\begin{cases}\chi_{e^*} & \text{if } e^*\in (E^1)^*
\\ 0 & \text{if }e^*\in (F^1)^*\setminus(E^1)^*\end{cases},
\end{gather*}
define a homomorphism $L_k(F)\overset{\pi}{\to}L_k(E)$ of algebras yielding the short exact sequence
\[
0\longto I(F^0\setminus E^0)\overset{\text{inclusion map}}{\longrightarrow} L_k(F)\overset{\pi}{\longto}L_k(E)\longto 0,
\]
where $I(F^0\setminus E^0)$ is the ideal of $L_k(E)$ generated by $F^0\setminus E^0$.
\end{lemma}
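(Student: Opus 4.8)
The plan is to realize $\pi$ as the ``restriction to $E$'' map and then identify its kernel with $I(F^0\setminus E^0)$ by exhibiting an explicit inverse on the quotient. Throughout I identify $E^0\subseteq F^0$ and $E^1\subseteq F^1$ via the inclusion, write $H:=F^0\setminus E^0$, and use the two admissibility conditions in the form: $H$ is hereditary and saturated, and $E^1=t^{-1}_F(E^0)$ (so an edge of $F$ lies in $E^1$ precisely when it ends in $E^0$).

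First I would build $\pi$ in two stages. Since $\bar{E}$ is a subgraph of $\bar{F}$, the assignment sending a path basis element $\chi_p$ of $k\bar{F}$ to $\chi_p$ when $p$ is a path in $\bar{E}$ and to $0$ otherwise is multiplicative on the basis (if some edge of $p$ or $q$ falls outside $\bar{E}$, or if $t(p)\ne s(q)$, both sides vanish), hence defines an algebra homomorphism $\tilde\pi\colon k\bar{F}\to L_k(E)$ agreeing with the stated formulas on vertices and (ghost) edges. The substantive step is to check that $\tilde\pi$ annihilates the two families of Leavitt relations of $F$, so that it descends to $\pi\colon L_k(F)\to L_k(E)$. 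For the relations $\chi_{e^*}\chi_f-\delta_{ef}\chi_{t(e)}$ of type~(1) this is a short case analysis: if $e,f\in E^1$ it is the type-(1) relation of $E$, and otherwise the relevant factor is killed, using that $e\notin E^1$ forces $t(e)\notin E^0$ and hence $\chi_{t(e)}\mapsto 0$. For the relations of type~(2) at a vertex $v$ that is a (necessarily finite, by row-finiteness) nonzero emitter, I split into $v\in H$ and $v\in E^0$. When $v\in H$, heredity forces every edge out of $v$ to end in $H$, hence to lie outside $E^1$, so both the sum and $\chi_v$ map to $0$. When $v\in E^0$, the edges out of $v$ landing in $E^0$ are exactly $s^{-1}_E(v)$, and $\tilde\pi$ discards the terms for edges into $H$; the remaining sum equals $\chi_v$ by the type-(2) relation in $E$ provided $v$ is regular in $E$. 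This is exactly where saturation enters: saturation of $H$ forbids a finite nonzero emitter of $F$ lying in $E^0$ from sending all of its edges into $H$, so $v$ does emit inside $E$ and is regular there.

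Next, surjectivity of $\pi$ is immediate since $\pi$ hits every algebra generator $\chi_v,\chi_e,\chi_{e^*}$ ($v\in E^0$, $e\in E^1$) of $L_k(E)$. The heart of the argument is the kernel computation. The inclusion $I(H)\subseteq\ker\pi$ is clear because $\pi(\chi_v)=0$ for $v\in H$ and $\ker\pi$ is an ideal. For the reverse inclusion I would pass to the quotient and construct a two-sided inverse: $\pi$ induces a surjection $\bar\pi\colon L_k(F)/I(H)\to L_k(E)$, and I define $\rho\colon L_k(E)\to L_k(F)/I(H)$ on generators by $\chi_v\mapsto[\chi_v]$, $\chi_e\mapsto[\chi_e]$, $\chi_{e^*}\mapsto[\chi_{e^*}]$ for $v\in E^0$, $e\in E^1$. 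The key point is that $\rho$ respects the Leavitt relations of $E$: type~(1) of $E$ is inherited from type~(1) of $F$, and for type~(2) of $E$ at a regular $v\in E^0$ one uses type~(2) of $F$ (valid since such $v$ is regular in $F$) to write $\sum_{e\in s^{-1}_E(v)}\chi_e\chi_{e^*}-\chi_v=-\sum_{t_F(e)\in H}\chi_e\chi_{e^*}$, and each term on the right lies in $I(H)$ because $\chi_e\chi_{e^*}=\chi_e\chi_{t_F(e)}\chi_{e^*}$ with $t_F(e)\in H$. A generator-by-generator check then gives $\bar\pi\circ\rho=\mathrm{id}$ and $\rho\circ\bar\pi=\mathrm{id}$, the latter using that $\chi_e$ and $\chi_{e^*}$ for $e\in F^1\setminus E^1$ already lie in $I(H)$ since $t_F(e)\in H$. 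Hence $\bar\pi$ is an isomorphism, $\ker\pi=I(H)$, and the sequence is exact.

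I expect the main obstacle to be the two type-(2) verifications, since these are the only places where the full strength of admissibility is used: saturation is precisely what prevents $\pi$ from being ill-defined on the type-(2) relations (Stage one), while heredity together with the identity $E^1=t^{-1}_F(E^0)$ is what makes the correction terms land in $I(H)$ (the inverse $\rho$). Everything else is bookkeeping on the path basis, but one must be attentive that row-finiteness is used throughout to guarantee that every emitter is a finite emitter, so that the type-(2) relations are available at all non-sink vertices of both $E$ and $F$.
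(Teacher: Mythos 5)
The paper itself states this lemma without proof (it is followed immediately by the corollary and the pullback theorem, with the heavy lifting deferred to the cited literature), so there is no in-paper argument to compare yours against; I can only assess your proposal on its own merits, and it is correct. Your two-stage construction is sound: the basis-level map $\tilde\pi\colon k\bar{F}\to L_k(E)$ is multiplicative because $\bar{E}$ is a subgraph of $\bar{F}$, and your case analysis for the descent is exactly where the hypotheses enter --- condition (2) of admissibility ($E^1=t_F^{-1}(E^0)$) handles the type-(1) relations, heredity of $H$ handles type-(2) at vertices of $H$, and saturation of $H$ is precisely what guarantees that a regular vertex of $F$ lying in $E^0$ is still a nonzero (hence regular) emitter in $E$, so that the type-(2) relation of $E$ is available. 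The kernel computation via the inverse $\rho$ on the quotient is also right, including the key identities $\chi_e=\chi_e\chi_{t_F(e)}$ and $\chi_{e^*}=\chi_{t_F(e)}\chi_{e^*}$ placing $\chi_e,\chi_{e^*}\in I(H)$ for $e\in F^1\setminus E^1$, and the correction term $\sum_{t_F(e)\in H}\chi_e\chi_{e^*}\in I(H)$ for type-(2); you also correctly flag that row-finiteness of $F$ is what makes the type-(2) relation of $F$ usable at every vertex that is regular in $E$.

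Two small points of care, neither a gap. First, the paper's statement contains a typo ($I(F^0\setminus E^0)$ is an ideal of $L_k(F)$, not of $L_k(E)$), which you silently and correctly repair. Second, since $L_k(F)$ is not unital in general and the paper defines the ideal generated by $S$ as the set of finite sums $\sum_i x_i s_i y_i$ with $x_i,y_i$ in the algebra, you should note that elements like $\chi_e\chi_{t_F(e)}$ still fit this template because vertex elements are idempotent: $\chi_e=\chi_e\,\chi_{t_F(e)}\,\chi_{t_F(e)}$. With that cosmetic remark added, your argument is complete.
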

\begin{corollary}
\[
L_k(E)\cong L_k(F)/I(F^0\setminus E^0)
\]
\end{corollary}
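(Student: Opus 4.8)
The plan is to deduce the isomorphism directly from the preceding Lemma by invoking the first isomorphism theorem for $k$-algebras; all the genuine work has already been carried out in establishing the short exact sequence, so only a routine quotient argument remains.

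First I would unpack the exactness of the sequence
\[
0\longto I(F^0\setminus E^0)\longrightarrow L_k(F)\overset{\pi}{\longto}L_k(E)\longto 0.
\]
Exactness at $L_k(E)$ says that $\pi$ is surjective, while exactness at $L_k(F)$ says that the kernel of $\pi$ coincides with the image of the inclusion map, namely $\ker\pi = I(F^0\setminus E^0)$. These are precisely the two facts needed to perform the quotient construction, and they are supplied verbatim by the Lemma.

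Next I would define the induced map
\[
\bar\pi\colon L_k(F)/I(F^0\setminus E^0)\longto L_k(E),\qquad \bar\pi\big(a + I(F^0\setminus E^0)\big):=\pi(a),
\]
and verify that it is a well-defined algebra isomorphism. Well-definedness holds because $\pi$ vanishes on $I(F^0\setminus E^0)=\ker\pi$, so its value on a coset does not depend on the chosen representative; that $\bar\pi$ respects addition, scalar multiplication, and the product is inherited from the corresponding properties of $\pi$, since all operations on the quotient are defined representative-wise. Surjectivity of $\bar\pi$ follows from surjectivity of $\pi$, and injectivity follows because $\bar\pi\big(a + I(F^0\setminus E^0)\big)=0$ forces $a\in\ker\pi=I(F^0\setminus E^0)$, i.e.\ the coset is already zero.

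The main, and essentially only, obstacle is conceptual rather than technical: one must be working in the correct category, namely that of associative (not necessarily unital) $k$-algebras, so that the first isomorphism theorem applies as stated and the quotient $L_k(F)/I(F^0\setminus E^0)$ carries its expected algebra structure. Once this is acknowledged, the corollary is an immediate formal consequence of the Lemma, with $\bar\pi$ furnishing the desired isomorphism.
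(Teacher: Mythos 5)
Your proof is correct and coincides with the paper's (implicit) argument: the paper states this corollary without proof, as an immediate consequence of the lemma's short exact sequence, and the first isomorphism theorem for (not necessarily unital) $k$-algebras that you spell out is exactly the intended routine step.
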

\begin{remark}
If $A\overset{f}{\to} B$ is a surjective homomorphism of algebras and $A$ is unital, then $B$ is also unital
and $f(1_A)=1_B$. Indeed, $\forall\;b\in B$:
\[
f(1_A)b=f(1_A)f(a)=f(a)=b,\qquad bf(1_A)=f(a)f(1_A)=f(a)=b.
\]
\end{remark}
\begin{definition}
Let $A_1\overset{f_1}{\longto}B\overset{f_2}{\longleftarrow}A_2$ be homomorphisms of algebras.
The \underline{pullback algebra} $P(f_1,f_2)$ of $f_1$ and $f_2$ is
\[
\boxed{P(f_1,f_2):=\left\{(x,y)\in A_1\oplus A_2~|~f_1(x)=f_2(y)\right\}.}
\]
Here $A_1\oplus A_2$ is viewed as an algebra with componentwise multiplication.
$P(f_1,f_2)$ is a subalgebra of $A_1\oplus A_2$ because $f_1$ and $f_2$ are algebra homomorphisms.
\end{definition}
\begin{theorem}[\cite{hrt20}]\footnote{Joint work with Sarah Reznikoff.}
Let $F_1$ and $F_2$ be row-finite graphs whose intersection $F_1\cap F_2$ is admissible,
and let $k$ be a field. Furthermore, let 
\[
L_k(F_1)\overset{\pi_1}{\longto}L_k(F_1\cap F_2)\overset{\pi_2}{\longleftarrow}L_k(F_2)
\]
and 
\[
L_k(F_1)\overset{p_1}{\longleftarrow}L_k(F_1\cup F_2)\overset{p_2}{\longto}L_k(F_2)
\]
be the canonical surjections of the preceding lemma. Then the map
\[
L_k(F_1\cup F_2)\ni x\longmapsto (p_1(x),p_2(x))\in L_k(F_1)\oplus L_k(F_2)
\]
corestricts to an \underline{isomorphism} $L_k(F_1\cup F_2)\to P(\pi_1,\pi_2)$ of algebras.
\end{theorem}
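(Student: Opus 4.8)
The plan is to verify the hypotheses of the standard criterion under which a commuting square of surjective algebra homomorphisms induces an isomorphism onto the pullback. Write $E:=F_1\cup F_2$ and set
\[
H_1:=E^0\setminus F_1^0=F_2^0\setminus F_1^0,\qquad H_2:=E^0\setminus F_2^0=F_1^0\setminus F_2^0 .
\]
By the lemma guaranteeing that an admissible intersection forces an admissible union, both inclusions $F_i\hookrightarrow E$ are admissible, so $H_1,H_2$ are hereditary and saturated in $E$. The first step is to check that $\pi_1\circ p_1=\pi_2\circ p_2$. Since all the maps in sight are determined by their values on the generators $\chi_v,\chi_e,\chi_{e^*}$, this is a routine evaluation showing that both composites equal the canonical surjection $\rho\colon L_k(E)\to L_k(F_1\cap F_2)$ that annihilates exactly the generators supported off $F_1^0\cap F_2^0$. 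Consequently $x\mapsto(p_1(x),p_2(x))$ does land in $P(\pi_1,\pi_2)$.

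Next I would record the kernels furnished by the preceding lemma and its corollary:
\[
\ker p_1=I(H_1),\qquad \ker p_2=I(H_2),\qquad \ker\pi_2=I_{F_2}(F_2^0\setminus F_1^0)=I_{F_2}(H_1),
\]
where $I(-)$ always denotes the ideal generated by the listed vertices. Surjectivity onto the pullback is then a diagram chase: given $(x,y)$ with $\pi_1(x)=\pi_2(y)$, use surjectivity of $p_1$ to pick $z_0$ with $p_1(z_0)=x$; then $\pi_2\bigl(p_2(z_0)-y\bigr)=\pi_1(x)-\pi_2(y)=0$, so $p_2(z_0)-y\in\ker\pi_2=I_{F_2}(H_1)$. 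Because $H_1\subseteq F_2^0$ and $p_2$ fixes these vertices, surjectivity of $p_2$ yields $p_2\bigl(I(H_1)\bigr)=I_{F_2}(H_1)$; hence I can choose $w\in\ker p_1=I(H_1)$ with $p_2(w)=p_2(z_0)-y$, and then $z:=z_0-w$ satisfies $p_1(z)=x$ and $p_2(z)=y$.

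The one genuinely substantial point, and the step I expect to be the main obstacle, is injectivity, i.e.\ that
\[
\ker p_1\cap\ker p_2=I(H_1)\cap I(H_2)=0 .
\]
Here the decisive structural fact is that $H_1$ and $H_2$ are \emph{disjoint}. The cleanest route is to invoke the lattice isomorphism, valid for row-finite graphs, between hereditary saturated subsets of $E^0$ and graded two-sided ideals of $L_k(E)$: under this isomorphism meets correspond to meets, and since $I(H_1)$ and $I(H_2)$ are graded their set-theoretic intersection is their meet, so $I(H_1)\cap I(H_2)=I(H_1\cap H_2)=I(\emptyset)=0$. A more self-contained alternative is a basis computation: one fixes the standard $k$-basis of $L_k(E)$ built from reduced monomials $\chi_\alpha\chi_{\beta^*}$ with $t(\alpha)=t(\beta)$, shows that for a hereditary saturated $H$ the ideal $I(H)$ is spanned by exactly those basis monomials with $t(\alpha)\in H$ (hereditariness keeps the reductions over $H$, while saturation is what prevents the relation $\chi_v=\sum_{e\in s^{-1}(v)}\chi_e\chi_{e^*}$ from leaking target-in-$H$ behaviour to a vertex $v\notin H$), and then observes that the index sets attached to $H_1$ and to $H_2$ are disjoint; since the spans of two disjoint subsets of a single basis meet only in $0$, this again forces $I(H_1)\cap I(H_2)=0$. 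Combining injectivity with the well-definedness and surjectivity above gives the claimed isomorphism $L_k(F_1\cup F_2)\xrightarrow{\ \sim\ }P(\pi_1,\pi_2)$.
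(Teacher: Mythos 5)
First, a point of comparison: the paper does not prove this theorem at all --- it is stated with a citation to \cite{hrt20} and a footnote, and no argument is given. So there is no in-paper proof to measure your proposal against; it can only be assessed on its own merits.

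Your main line of argument is correct and complete. The verification that $\pi_1\circ p_1=\pi_2\circ p_2$ on the generators $\chi_v,\chi_e,\chi_{e^*}$ (using $(F_1\cap F_2)^1=F_1^1\cap F_2^1$) shows the map lands in $P(\pi_1,\pi_2)$. The surjectivity chase is sound: $p_2\bigl(I(H_1)\bigr)$ is the ideal of $L_k(F_2)$ generated by $\{p_2(\chi_v):v\in H_1\}=\{\chi_v:v\in H_1\}$ (the image of an ideal under a surjective homomorphism is the ideal generated by the images of its generators), and this is exactly $\ker\pi_2=I_{F_2}(H_1)$, so the correction term $w$ exists. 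Injectivity correctly reduces to $I(H_1)\cap I(H_2)=0$, and your first route settles it: $I(H_1)$, $I(H_2)$ are generated by homogeneous degree-zero elements, hence graded, and for row-finite graphs $H\mapsto I(H)$ is a lattice isomorphism onto the graded ideals of $L_k(F_1\cup F_2)$ (this is in \cite{aam17}), so $I(H_1)\cap I(H_2)=I(H_1\cap H_2)=I(\emptyset)=0$. You also correctly route the needed hypotheses through the paper's two lemmas (admissible intersection implies admissible union; the short exact sequence identifying $\ker p_i$ and $\ker\pi_i$).

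The one genuine error is in your ``more self-contained alternative'' for injectivity: it is \emph{not} true that for a hereditary saturated $H$ the ideal $I(H)$ is spanned by the basis monomials $\chi_\alpha\chi_{\beta^*}$ with $t(\alpha)=t(\beta)\in H$, for an arbitrary choice of the monomial basis. Take $E^0=\{v,w_1,w_2\}$, $E^1=\{e_1,e_2\}$ with $e_i\colon v\to w_i$, and $H=\{w_1\}$, which is hereditary and saturated. If the special edge chosen at $v$ in building the basis is $e_1$, the basis is $\{\chi_v,\chi_{w_1},\chi_{w_2},\chi_{e_1},\chi_{e_2},\chi_{e_1^*},\chi_{e_2^*},\chi_{e_2}\chi_{e_2^*}\}$, and the basis monomials with target in $H$ span only a $3$-dimensional space, whereas $I(H)\cong M_2(k)$ is $4$-dimensional: it contains $\chi_{e_1}\chi_{e_1^*}=\chi_v-\chi_{e_2}\chi_{e_2^*}$, whose reduced expansion involves targets \emph{outside} $H$. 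Saturation prevents the leak you mention ($\chi_v\in I(H)$ for $v\notin H$), but it does not prevent this one: the CK2 rewriting at a vertex outside $H$ whose special edge ends inside $H$ pushes target-in-$H$ monomials out of the target-in-$H$ span. The argument can be repaired, but only by choosing the special edges adapted to $H_1,H_2$: at every vertex outside $H_1\cup H_2$ that emits into $H_1\cup H_2$, pick a special edge ending in $(F_1\cap F_2)^0$ --- such an edge exists precisely by saturation of $H_1$ in $F_2$ and of $H_2$ in $F_1$ together with row-finiteness, which is where admissibility genuinely enters. Since you offer this only as an alternative, your proof stands on the graded-ideal route; the alternative should be repaired as above or dropped.
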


\section{Exercises}

\begin{enumerate}

\item[\bf Problem 1]
Construct a graph with 5 edges and no loops such that there are exactly 5 different paths of length~2.
\item[\bf Solution:]
$$\begin{tikzpicture}[auto,swap]
\tikzstyle{vertex}=[circle,fill=black,minimum size=3pt,inner sep=0pt]
\tikzstyle{edge}=[draw,->]
   
\node[vertex] (0) at (0,0) {};
\node[vertex] (1) at (1,0) {};
\node[vertex] (2) at (2,0) {};
\node[vertex] (3) at (3,0) {};
\node[vertex] (4) at (4,0) {};

\path (0) edge[edge] node {} (1);
\path (1) edge[edge] node {} (2);
\path (1) edge[edge,bend left] node {} (2);
\path (2) edge[edge] node {} (3);
\path (3) edge[edge] node {} (4);

\end{tikzpicture}
$$
\bigskip

\item[\bf Problem 2]
Let $E$ be a finite graph whose all vertices emit at least one edge. Prove that there is a loop in~$E$.
\item[\bf Solution:]
If every vertex emits at least one edge, then there exists an infinite path. This in turn means
that the set of all finite paths $FP(E)$ is infinite. However, we proved that $FP(E)$ is finite
if and only if there are no loops in $E$. Hence the claim follows.
\bigskip

\item[\bf Problem 3]
Construct a graph with $7$ edges, no loops, and whose longest path is longer than~3, but for which the number of paths of length $3$
is still maximal.
\item[\bf Solution:]
$$\begin{tikzpicture}[auto,swap]
\tikzstyle{vertex}=[circle,fill=black,minimum size=3pt,inner sep=0pt]
\tikzstyle{edge}=[draw,->]
   
\node[vertex] (0) at (0,0) {};
\node[vertex] (1) at (1,0) {};
\node[vertex] (2) at (2,0) {};
\node[vertex] (3) at (3,0) {};
\node[vertex] (4) at (4,0) {};

\path (0) edge[edge] node {} (1);
\path (1) edge[edge] node {} (2);
\path (1) edge[edge,bend left] node {} (2);
\path (2) edge[edge] node {} (3);
\path (2) edge[edge,bend left] node {} (3);
\path (3) edge[edge] node {} (4);
\path (3) edge[edge,bend left] node {} (4);

\end{tikzpicture}
$$
\bigskip

\item[\bf Problem 4]
Construct all graphs with $4$ edges, no loops, and all vertices in the image of the source map or the  target map,
such that the number of \emph{all} positive-length finite paths is maximal.
\item[\bf Solution:]
$$\begin{tikzpicture}[auto,swap]
\tikzstyle{vertex}=[circle,fill=black,minimum size=3pt,inner sep=0pt]
\tikzstyle{edge}=[draw,->]
   
\node[vertex] (0) at (0,0) {};
\node[vertex] (1) at (1,0) {};
\node[vertex] (2) at (2,0) {};
\node[vertex] (3) at (3,0) {};
\node[vertex] (4) at (4,0) {};

\path (0) edge[edge] node {} (1);
\path (1) edge[edge] node {} (2);
\path (2) edge[edge] node {} (3);
\path (3) edge[edge] node {} (4);

\end{tikzpicture}\qquad
\begin{tikzpicture}[auto,swap]
\tikzstyle{vertex}=[circle,fill=black,minimum size=3pt,inner sep=0pt]
\tikzstyle{edge}=[draw,->]
   
\node[vertex] (0) at (0,0) {};
\node[vertex] (1) at (1,0) {};
\node[vertex] (2) at (2,0) {};
\node[vertex] (3) at (3,0) {};

\path (0) edge[edge] node {} (1);
\path (1) edge[edge] node {} (2);
\path (1) edge[edge,bend left] node {} (2);
\path (2) edge[edge] node {} (3);

\end{tikzpicture}
$$
\bigskip

\item[\bf Problem 5]
For the graph given below, find the number of all paths of length $k\geq 2$.
\begin{center}
\begin{tikzpicture}[>=stealth,node distance=50pt,main node/.style={circle,inner sep=2pt},
freccia/.style={->,shorten >=2pt,shorten <=2pt},
ciclo/.style={out=130, in=50, loop, distance=40pt, ->},
circle/.style={out=135, in=45, loop, distance=65pt, ->}]

      \node[main node] (1) {};
      \node (2) [right of=1] {};

      \filldraw (1) circle (0.06) node[below left] {};
      \filldraw (2) circle (0.06) node[below right] {};

      \path[freccia] (1) edge[ciclo] (1);
			\path[freccia] (2) edge[ciclo] (2);

      \path[freccia] (1) edge (2);
       \path[freccia] (2) edge[bend left] (1);
			   
\end{tikzpicture}
\end{center}
\item[\bf Solution:]
The adjacency matrix for the above graph is
$$
A(E)=\begin{bmatrix}1 & 1\\ 1 & 1\end{bmatrix}.
$$
We prove by induction that, for any $k\geq 1$, we have
$$
A(E)^k=\begin{bmatrix}2^{k-1} & 2^{k-1}\\ 2^{k-1} & 2^{k-1}\end{bmatrix}.
$$
Indeed, the result holds for $k=1,$ and the computation
$$
\begin{bmatrix}2^{k-1} & 2^{k-1}\\2^{k-1} & 2^{k-1}\end{bmatrix}
\begin{bmatrix}1 & 1\\ 1 & 1\end{bmatrix}=
\begin{bmatrix}2^{k} & 2^{k}\\2^{k} & 2^{k}\end{bmatrix}
$$
proves the inductive step. Hence, the number of all $k$-paths in $E$ equals 
$$\sum_{i=1}^2\sum_{j=1}^2\left(A(E)^k\right)_{ij}=4\cdot 2^{k-1}=2^{k+1}.$$
\bigskip

\item[\bf Problem 6]
Interpreting the matrix
$$
A(E)=\left( \begin{array}{cccccc}
    0 & a_1 & 0 & \dots & 0 & 0  \\
    0 & 0 & a_2 & & & \\
    0 & 0 & 0 & \ddots & &\vdots\\
    \vdots & & & \ddots&  & a_{n}\\
    0 &  & &\ldots & & 0
   \end{array} \right)
$$
as the adjacency matrix of a certain graph~$E$, prove that
$$
A(E)^n=\left( \begin{array}{cccccc}
    0 & 0 & 0 & \dots & 0 & \prod_{i=1}^{n}a_i  \\
    0 & 0 & 0 & & & \\
    0 & 0 & 0 & \ddots & &\vdots\\
    \vdots & & & \ddots&  & 0\\
    0 &  & &\ldots & & 0
   \end{array} \right).
$$
\item[\bf Solution:]
$A(E)$ can be viewed as the adjacency matrix of the following graph~$E$:
\begin{center}
\begin{tikzpicture}[auto,swap]
\tikzstyle{vertex}=[circle,fill=black,minimum size=3pt,inner sep=0pt]
\tikzstyle{edge}=[draw,very thick,->]
\tikzstyle{thinedge}=[draw,->]
\tikzset{every loop/.style={min distance=20mm,in=130,out=50,looseness=50}}
   
   \node[vertex, above] (0) at (0,0) {};
   \node[vertex, above] (1) at (1,0) {};
   \node[vertex, above] (2) at (2,0) {};
   \node (25) at (2.5,0) {\ldots};
   \node[vertex,above] (7) at (3,0) {};
   \node[vertex,above] (8) at (4,0) {};
   \node[vertex,above] (9) at (6,0) {};
   
   \path (0) edge[edge,below] node {$(a_1)$} (1);
   \path (1) edge[edge,below] node {$(a_2)$} (2); 
   \path (7) edge[edge,below] node {$(a_{n-2})$} (8);
   \path (8) edge[edge,below] node {$(a_{n})$} (9);

\end{tikzpicture}
\end{center}
There are $\prod_{i=1}^{n}a_i$ many paths of length $n$, so  the sum of all entires of $A(E)^n$  equals  $\prod_{i=1}^{n}a_i$. Furthermore, as
all paths of length $n$ start at the first vertex and end at the $n$-th vertex, the only non-zero entry of  $A(E)^n$ is the last entry in the first row.
Hence $A(E)^n$ is of the claimed  form.
\bigskip

\item[\bf Problem 7]
Prove that the adjacency matrix of the graph given below raised to the $5$-th power is zero.
$$
\begin{tikzpicture}[auto,swap]
\tikzstyle{vertex}=[circle,fill=black,minimum size=3pt,inner sep=0pt]
\tikzstyle{edge}=[draw,->]
   
\node[vertex] (0) at (0,0) {};
\node[vertex] (1) at (1,0) {};
\node[vertex] (2) at (0,-1) {};
\node[vertex] (3) at (1,-1) {};
\node[vertex] (4) at (1.5,0.5) {};

\path (4) edge[edge,bend right] node {} (0);
\path (4) edge[edge,bend left] node {} (3);
\path (4) edge[edge] node {} (1);
\path (0) edge[edge] node {} (2);
\path (0) edge[edge,bend right] node {} (2);
\path (0) edge[edge] node {} (3);
\path (1) edge[edge] node {} (0);
\path (1) edge[edge] node {} (2);
\path (1) edge[edge] node {} (3);
\path (3) edge[edge] node {} (2);
\path (3) edge[edge,bend left] node {} (2);

\end{tikzpicture}
$$
\item[\bf Solution:]
Call the above graph~$E$. Since there are no loops in $E$ and $E$ is finite, there exists a longest path.
Any longest path must end in a sink. There is only one sink in $E$, and
one can easily check that its longest path is of length~$4$.
Hence, there are no paths of length $5$, so the adjacency matrix of~$E$ raised to
the $5$-th power is zero.
\bigskip

\item[\bf Problem 8]
Find all hereditary subsets and all saturated subsets for the graph given below.
$$
\begin{tikzpicture}[auto,swap]
\tikzstyle{vertex}=[circle,fill=black,minimum size=3pt,inner sep=0pt]
\tikzstyle{edge}=[draw,->]
\tikzstyle{cycle1}=[draw,->,out=130, in=50, loop, distance=40pt]
\tikzstyle{cycle2}=[draw,->,out=100, in=30, loop, distance=40pt]
   
\node[vertex,label=right:$p$] (0) at (0,0) {};
\node[vertex,label=below:$w$] (1) at (1,-1) {};
\node[vertex,label=below:$v$] (2) at (-1,-1) {};

\path (0) edge[cycle1] node {} (0);
\path (0) edge[edge] node {} (1);
\path (0) edge[edge,bend right] node {} (1);
\path (0) edge[edge] node {} (2);
\path (1) edge[cycle2] node {} (1);
\path (2) edge[edge] node {} (1);

\end{tikzpicture}
$$
\item[\bf Solution:]
Call the above graph $E$, and consider all possible subsets of $E^0$.
\begin{enumerate}
\item $\{p\}\subseteq E^0$ is not hereditary because $p$ emits arrows that end not at~$p$.
It is saturated since there are no arrows ending at $p$ which start not at~$p$.
\item $\{v\}\subseteq E^0$ is not hereditary because $v$ emits an arrow that ends not at $v$.
It is saturated since the only arrow ending in $v$ starts at $p$ which emits arrows ending not at~$v$.
\item $\{w\}\subseteq E^0$ is hereditary because $w$ emits only an edge ending at~$w$.
It is not saturated since $v$ emits only one edge, and the edge ends at $w$.
\item $\{p,v\}\subseteq E^0$ is not hereditary because both $p$ and $v$ emit an arrow that ends in $w$.
It is saturated since $w$  emits only one edge ending at~$w$.
\item $\{p,w\}\subseteq E^0$ is not hereditary because $p$ emits an arrow that ends at~$v$.
It is also not saturated since $v$ emits only one edge, and the edge ends at $w$.
\item $\{v,w\}\subseteq E^0$ is hereditary because $v$ emits only an arrow that ends at $w$m, and
$w$ emits only one edge, and the edge ends at~$w$. It is also saturated since there is an edge emitted by $p$ that ends at~$p$.
\item Both $\emptyset$ and $E^0$ are hereditary and saturated.
\end{enumerate}
Conclusion: There are four herditary subsets $\emptyset, \{w\},\{v,w\}, E^0$, and six saturated subsets
$\emptyset,\{p\},\{v\},\{p,v\},\{v,w\},E^0$. 
\bigskip

\item[\bf Problem 9]
Let $(f_0,f_1):E\to F$ a graph homomorphism with both $f_0$ and $f_1$ bijective.
Prove that $(f_0^{-1},f_1^{-1}):F\to E$ is a graph homomorphism.
\item[\bf Solution:]
Since $(f_0,f_1):E\to F$ is a graph homomorphism, we have
$$
s_F\circ f_1=f_0\circ s_E,\qquad t_F\circ f_1=f_0\circ t_E\,.
$$
Composing both of the above equlities with $f^{-1}_0$ on the left and $f^{-1}_1$ on the right yields
$$
f^{-1}_0\circ s_F=s_E\circ f^{-1}_1,\qquad f^{-1}_0\circ t_F=t_E\circ f^{-1}_1\,.
$$
This means that $(f_0^{-1},f_1^{-1}):F\to E$ is a graph homomorphism.
\bigskip

\item[\bf Problem 10]
Consider two graphs $E$ and $F$:
$$
\begin{tikzpicture}[auto,swap]
\tikzstyle{vertex}=[circle,fill=black,minimum size=3pt,inner sep=0pt]
\tikzstyle{cycle1}=[draw,->,out=130, in=50, loop, distance=40pt]
\tikzstyle{edge}=[draw,->]
   
\node[vertex] (0) at (0,0) {};
\node[vertex] (1) at (1.5,0) {};

\path (0) edge[cycle1] node {} (0);
\path (0) edge[edge] node {} (1);

\end{tikzpicture}\qquad 
\begin{tikzpicture}[auto,swap]
\tikzstyle{vertex}=[circle,fill=black,minimum size=3pt,inner sep=0pt]
\tikzstyle{cycle1}=[draw,->,out=130, in=50, loop, distance=40pt]
\tikzstyle{edge}=[draw,->]
   
\node[vertex] (0) at (0,0) {};
\node[vertex] (1) at (2,0) {};
\node[vertex] (2) at (1,-1) {};
\node[vertex] (3) at (3,-1) {};

\path (0) edge[cycle1] node {} (0);
\path (0) edge[edge] node {} (1);
\path (0) edge[edge,bend left] node {} (1);
\path (1) edge[cycle1] node {} (1);
\path (1) edge[edge] node {} (3);
\path (0) edge[edge] node {} (2);

\end{tikzpicture}
$$
\noindent Define two injective graph homomorphisms from $E$ to $F$
such that one of them is an admissible inclusion and the other one is not.
\item[\bf Solution:]
Let us label the vertices and edges in both graphs as follows:
$$
\begin{tikzpicture}[auto,swap]
\tikzstyle{vertex}=[circle,fill=black,minimum size=3pt,inner sep=0pt]
\tikzstyle{cycle1}=[draw,->,out=130, in=50, loop, distance=40pt]
\tikzstyle{edge}=[draw,->]
   
\node[vertex,label=below:$v_1$] (0) at (0,0) {};
\node[vertex, label=below:$v_2$] (1) at (1.5,0) {};

\path (0) edge[cycle1,above] node {$s_{11}$} (0);
\path (0) edge[edge,above] node {$s_{12}$} (1);

\end{tikzpicture}
$$
$$
\begin{tikzpicture}[auto,swap]
\tikzstyle{vertex}=[circle,fill=black,minimum size=3pt,inner sep=0pt]
\tikzstyle{cycle1}=[draw,->,out=130, in=50, loop, distance=40pt]
\tikzstyle{edge}=[draw,->]
   
\node[vertex,label=below:$w_1$] (0) at (0,0) {};
\node[vertex,label=below:$w_2$] (1) at (2,0) {};
\node[vertex,label=below:$w_3$] (2) at (1,-1) {};
\node[vertex,label=below:$w_4$] (3) at (3,-1) {};

\path (0) edge[cycle1,above] node {$e_{11}$} (0);
\path (0) edge[edge,below] node {$e^1_{12}$} (1);
\path (0) edge[edge,bend left,above] node {$e^2_{12}$} (1);
\path (1) edge[cycle1,above] node {$e_{22}$} (1);
\path (1) edge[edge,right] node {$e_{24}$} (3);
\path (0) edge[edge,below left] node {$e_{13}$} (2);
\end{tikzpicture}
$$

First, we check that the inclusion
$$
\iota_1:E\to F,\quad v_1\mapsto w_1,\quad v_2\mapsto w_3,
\quad s_{11}\mapsto e_{11},\quad s_{12}\mapsto e_{13}\,,
$$
is admissible: $F^0\setminus\iota_1(E^0)=F^0\setminus\{w_1,w_3\}=\{w_2,w_4\}$
is both hereditary ($w_4$ is a sink and all paths starting at $w_2$ end at $w_2$ or $w_4$) and saturated
 ($w_3$ is a sink and $w_1$ emits a loop), and
$$
F^1\setminus t^{-1}_F(F^0\setminus\iota_1(E^0))=
t^{-1}_F(\iota_1(E^0))=
\{e_{11},e_{13}\}=\iota_1(E^1)\,.
$$
Next, we consider the inclusion
$$
\iota_2:E\to F,\quad v_1\mapsto w_2,\quad v_2\mapsto w_4,
\quad s_{11}\mapsto e_{22},\quad s_{12}\mapsto e_{24}\,,
$$
which is not admissible because $F^0\setminus\iota_2(E^0)=\{w_1,w_3\}$
is not hereditary ($w_1$ emits  $e^1_{12}$ which ends at $w_2$).
\bigskip

\item[\bf Problem 11]
Prove that the path algebra over a field $k$ of the graph  
$$
\begin{tikzpicture}[auto,swap]
\tikzstyle{vertex}=[circle,fill=black,minimum size=3pt,inner sep=0pt]
\tikzstyle{cycle1}=[draw,->,out=130, in=50, loop, distance=40pt]
   
\node[vertex] (0) at (0,0) {};

\path (0) edge[cycle1] node {} (0);

\end{tikzpicture}
$$
is isomorphic to the polynomial algebra $k[\mathbb{N}]$.
\item[\bf Solution:]
Call the above graph $E$, denote the vertex in $E$ by $v$ and the loop in $E$  by~$\alpha$ .
By definition, $\{\chi_v,\chi_\alpha,\chi_{\alpha^2},\ldots\}$ is a basis of the path algebra $kE$. Note first that
$\chi_v=1$ because $\chi_v\chi_{\alpha^i}=\chi_{\alpha^i}=\chi_{\alpha^i}\chi_v$ for all $i\in\mathbb{N}$.
Next, the multiplication is given by $\chi_{\alpha^i}\chi_{\alpha^j}=\chi_{\alpha^{i+j}}$.
Much in the same way, $\{1,x,x^2,\ldots\}$ is a basis of $k[\mathbb{N}]$. Here
$$
x^m(n):=\begin{cases}1 & n=m\\0 & n\neq m\end{cases}
$$
and the convolution product of two basis elements reads $x^i\ast x^j=x^{i+j}$.
Hence, the linear bijection determined by
$$
\varphi:k[\mathbb{N}]\longrightarrow kE,\quad 1\longmapsto \chi_v,\quad x^i\longmapsto \chi_{\alpha^i},
\; i\in \mathbb{N}\setminus\{0\},
$$
is an algebra isomorphism.
\bigskip

\item[\bf Problem 12]
Prove that the path algebra over a field $k$ of the graph 
$$
\begin{tikzpicture}[auto,swap]
\tikzstyle{vertex}=[circle,fill=black,minimum size=3pt,inner sep=0pt]
\tikzstyle{edge}=[draw,->]
   
\node[vertex] (0) at (0,0) {};
\node[vertex] (1) at (1,0) {};

\path (0) edge[edge] node {} (1);

\end{tikzpicture}
$$
is isomorphic to the algebra of upper triangular $2\times 2$ matrices over $k$.
\item[\bf Solution:]
The algebra of upper triangular $2\times 2$ matrices over $k$ admits the following basis:
$$
E_{11}:=\begin{bmatrix}
1 & 0\\
0 & 0
\end{bmatrix},\quad
E_{12}:=\begin{bmatrix}
0 & 1\\
0 & 0
\end{bmatrix},\quad
E_{22}:=\begin{bmatrix}
0 & 0\\
0 & 1
\end{bmatrix}.
$$
The multiplication is given by
\begin{center}
\begin{tabular}{c|ccc}
$\cdot$ & $E_{11}$ & $E_{12}$ & $E_{22}$\\
\hline
$E_{11}$ & $E_{11}$ & $E_{12}$ & $0$ \\
$E_{12}$ & $0$ & $0$ & $E_{12}$ \\
$E_{22}$ & $0$ & $0$ & $E_{22}$ 
\end{tabular}
\end{center}
Next, call the above graph $E$ and denote by $v$ its left vertex, by $e$ its edge, and by $w$ its right vertex. Then,
 by definition, $\{\chi_v, \chi_e, \chi_w\}$ is a basis of~$kE$. The multiplication is given by
\begin{center}
\begin{tabular}{c|ccc}
$\cdot$ & $\chi_v$ & $\chi_e$ & $\chi_w$\\
\hline
$\chi_v$ & $\chi_v$ & $\chi_e$ & $0$ \\
$\chi_e$ & $0$ & $0$ & $\chi_e$ \\
$\chi_w$ & $0$ & $0$ & $\chi_w$ 
\end{tabular}
\end{center}
Hence the linear map determined by
$$
E_{11}\mapsto \chi_v,\quad E_{12}\mapsto \chi_e,\quad E_{22}\mapsto \chi_w,
$$
is an algebra isomorphism.
\bigskip

\item[\bf Problem 13]
Let $E=(E^0,E^1,s,t)$ be a graph and $k$ be a field. Prove that the path algebra $kE$ is unital if and only if $E^0$ is finite.
\item[\bf Solution:]
Assume that $E^0$ is finite. Then
$$
1_{kE}=\sum_{v\in E^0}\chi_v.
$$
Indeed, for any $p\in FP(E)$, we have
$$
\chi_p\left(\sum_{v\in E^0}\chi_v\right)=\sum_{v\in E^0}\chi_p\chi_v=\sum_{v\in E^0}\chi_{pv}=\chi_{pt(p)}=\chi_p\,,
$$
$$
\left(\sum_{v\in E^0}\chi_v\right)\chi_p=\sum_{v\in E^0}\chi_v\chi_p=\sum_{v\in E^0}\chi_{vp}=\chi_{s(p)p}=\chi_p\,.
$$
Assume now that $kE$ is unital. Then $1_{kE}$ can be expressed as a finite linear combination of some basis 
elements: 
$$
1_{kE}=\sum_{i=1}^n\lambda_i \chi_{p_i}\,,
$$
where each $p_i$ is a path in $E$. Since $1_{kE}\chi_v=\chi_v\neq 0$ for all $v\in E^0$, we infer that
that $E^0\subseteq\{t(p_i)\}_{i=1}^n$. Consequently, $E^0$ is finite because any subset of a finite set is finite.
\bigskip

\item[\bf Problem 14]
Let $E=(E^0,E^1,s,t)$ be a graph and let $k$ be a field. Prove that the path algebra $kE$ is commutative if and only if
$E^1=\emptyset$ or each edge is a loop starting/ending at a different vertex.
\item[\bf Solution:]
Assume that $E^1=\emptyset$. Then, for all $v,w\in E^0$, $v\neq w$, $\chi_v\chi_w=0=\chi_w\chi_v$, so
$kE$ is commutative. Now let $E^1$ consist only of loops starting at different vertices.
Then, for any $p,q\in FP(E)$, with $t(p)\neq t(q)$, we have
$
\chi_p\chi_q=0=\chi_q\chi_p
$.
If $t(p)= t(q)$, then 
$
\chi_p\chi_q=\chi_{pq}=\chi_{qp}=\chi_q\chi_p
$.
Hence $kE$ is commutative.
To prove the opposite implication, we need to negate the following statement:
$$
E^1=\emptyset\text{ or } 
\big(E^1\neq\emptyset\text{ and }\forall\;e\in E^1: s(e)=t(e)\text{ and }
(e\neq f \Rightarrow s(e)\neq s(f))\big).
$$
The negation reads
$$
\exists\; e\in E^1: s(e)\neq t(e)\text{ or }(e\neq f\text{ and }s(e)=s(f)).
$$
First assume that there is an edge $e$ which is not a loop. Then
$$
\chi_{s(e)}\chi_e=\chi_e\neq 0=\chi_e\chi_{s(e)}\,,
$$
so $kE$ is not commutative. Next, if there are two different edges $e$ and $f$ such that they are loops and
$s(e)=s(f)$, then $\chi_e\chi_f\neq \chi_f\chi_e$ because they are two different paths. Hence, again, $kE$ is noncommutative.
\bigskip

\item[\bf Problem 15]
Prove that the Leavitt path algebra over a field $k$ of the graph  
$$
\begin{tikzpicture}[auto,swap]
\tikzstyle{vertex}=[circle,fill=black,minimum size=3pt,inner sep=0pt]
\tikzstyle{cycle1}=[draw,->,out=130, in=50, loop, distance=40pt]
   
\node[vertex] (0) at (0,0) {};

\path (0) edge[cycle1] node {} (0);

\end{tikzpicture}
$$
is isomorphic to the Laurent polynomial algebra $k[\mathbb{Z}]$.
\item[\bf Solution:]
Call the above graph $E$ and denote by $v$ the vertex of $E$ and by $\alpha$ the edge of $E$.
Then $[\chi_v]=1$ in the Leavitt path algebra $L_k(E)$, and $L_k(E)$ is spanned by  
$B:=\{1,[\chi_\alpha],[\chi_{\alpha^2}],[\chi_{\alpha^*}],[\chi_{(\alpha^*)^2}],\ldots\}$
because $[\chi_{\alpha^*\alpha}]=1=[\chi_{\alpha\alpha^*}]$.
The set $B$ is also linearly independent by Corollary~1.5.12 in \emph{Leavitt Path Algebras}, so it is 
a basis of $L_k(E)$. The multiplication of elements of $B$ is given by 
$$
[\chi_{\alpha^i}][\chi_{\alpha^j}]=[\chi_{\alpha^{i+j}}],\quad [\chi_{(\alpha^*)^i}][\chi_{(\alpha^*)^j}]=[\chi_{(\alpha^*)^{i+j}}],
$$
$$
[\chi_{\alpha^i}][\chi_{(\alpha^*)^j}]=[\chi_{(\alpha^*)^j}][\chi_{\alpha^i}]=
\begin{cases}[\chi_{\alpha^{i-j}}]&\text{for $i>j$}\\1 & \text{for $i=j$}\\
[\chi_{(\alpha^*)^{j-i}}] & \text{for $j>i$}\end{cases}
$$
for all $i,j\in\mathbb{N}$, with the convention that $[\chi_{\alpha^0}]=1=[\chi_{(\alpha^*)^0}]$.
Next, recall that $\{x^i\}_{i\in\mathbb{Z}}$ is a basis of $k[\mathbb{Z}]$, where
$$
x^j(n):=\begin{cases}1 & n=j\\0 & n\neq j\end{cases}.
$$
The convolution product for the basis elements reads 
$x^i\ast x^j=x^{i+j}$ for all $i,j\in\mathbb{N}$.
Hence, the linear map determined by
$$
x^i\longmapsto \chi_{\alpha^i} \text{ for }i\geq 0, 
\quad x^{i}\longmapsto \chi_{(\alpha^*)^{-i}}\text{ for } i\leq 0,
$$
is an algebra isomorphism.
\bigskip

\item[\bf Problem 16]
Prove that the Leavitt path algebra over a field $k$ of the graph 
$$
\begin{tikzpicture}[auto,swap]
\tikzstyle{vertex}=[circle,fill=black,minimum size=3pt,inner sep=0pt]
\tikzstyle{edge}=[draw,->]
   
\node[vertex] (0) at (0,0) {};
\node[vertex] (1) at (1,0) {};

\path (0) edge[edge] node {} (1);

\end{tikzpicture}
$$
is isomorphic to the algebra of $2\times 2$ matrices over $k$.
\item[\bf Solution:]
The algebra of $2\times 2$ matrices over $k$ admits the following basis:
$$
E_{11}:=\begin{bmatrix}
1 & 0\\
0 & 0
\end{bmatrix},\quad
E_{12}:=\begin{bmatrix}
0 & 1\\
0 & 0
\end{bmatrix},\quad
E_{21}:=\begin{bmatrix}
0 & 0\\
1 & 0
\end{bmatrix},\quad
E_{22}:=\begin{bmatrix}
0 & 0\\
0 & 1
\end{bmatrix}.
$$
The multiplication is given by
\begin{center}
\begin{tabular}{c|cccc}
$\cdot$ & $E_{11}$ & $E_{12}$ & $E_{21}$ & $E_{22}$\\
\hline
$E_{11}$ & $E_{11}$ & $E_{12}$ & $0$ & $0$ \\
$E_{12}$ & $0$ & $0$ & $E_{11}$ & $E_{12}$ \\
$E_{21}$ & $E_{21}$ & $E_{22}$ & $0$ & $0$\\
$E_{22}$ & $0$ & $0$ & $E_{21}$ & $E_{22}$
\end{tabular}
\end{center}
Next, call the above graph $E$ and denote by $v$ its left vertex, by $e$ its edge, and by $w$ its right vertex. Then
$B:=\{[\chi_v], [\chi_e], [\chi_{e^*}], [\chi_w]\}$ is a basis of the Leavitt path algebra~$L_k(E)$
by Corollary~1.5.12 in \emph{Leavitt Path Algebras}. The multiplication of elements of $B$ is given by 
\begin{center}
\begin{tabular}{c|cccc}
$\cdot$ & $[\chi_v]$ & $[\chi_e]$ & $[\chi_{e^*}]$ & $[\chi_w]$\\
\hline
$[\chi_v]$ & $[\chi_v]$ & $[\chi_e]$ & $0$ & $0$ \\
$[\chi_e]$ & $0$ & $0$ & $[\chi_v]$ & $[\chi_e]$ \\
$[\chi_{e^*}]$ & $[\chi_{e^*}]$ & $[\chi_w]$ & $0$ & $0$ \\
$[\chi_w]$ & $0$ & $0$ & $[\chi_{e^*}]$ & $[\chi_w]$
\end{tabular}
\end{center}
Hence the linear map determined by
$$
E_{11}\mapsto [\chi_v],\quad E_{12}\mapsto [\chi_e],
\quad E_{21}\mapsto [\chi_{e^*}],\quad E_{22}\mapsto [\chi_w],
$$
is an algebra isomorphism.
\bigskip

\item[\bf Problem 17]
Let $k$ be a field. Up to isomorphism, find all $6$-dimensional path  $k$-algebras  of connected graphs.
\bigskip
\item[\bf Solution:]
Since for a graph $E$ the basis of the path algebra $kE$ consists of all finite paths,
we need two find connected graphs such that the number of all their paths, including the $0$-paths (vertices), 
equals $6$.
Consider a graph with:
\begin{enumerate}
\item $1$ vertex. Then, if $E^1\neq\emptyset$,
there are no finite-dimensional path algebras for such graphs.
\item $2$ vertices. Then, to avoid creating a loop, 
the only possibility is to have $4$ edges between these vertices arranged like this:
$$
\begin{tikzpicture}[auto,swap]
\tikzstyle{vertex}=[circle,fill=black,minimum size=3pt,inner sep=0pt]
\tikzstyle{edge}=[draw,->, very thick]
\tikzstyle{cycle}=[draw,->,loop,out=130,in=50,distance=30pt]
   
\node[vertex] (0) at (0,0) {};
\node[vertex] (1) at (1,0) {};

\path (0) edge[edge] node[above] {(4)} (1);

\end{tikzpicture}
$$
\item $3$ vertices. Then we have the following graphs:
$$
\begin{tikzpicture}[auto,swap]
\tikzstyle{vertex}=[circle,fill=black,minimum size=3pt,inner sep=0pt]
\tikzstyle{edge}=[draw,->]
\tikzstyle{cycle}=[draw,->,loop,out=130,in=50,distance=30pt]
   
\node[vertex] (0) at (0,0) {};
\node[vertex] (1) at (1,0) {};
\node[vertex] (2) at (2,0) {};

\path (0) edge[edge] node {} (1);
\path (1) edge[edge] node {} (2);

\end{tikzpicture}
\qquad
\begin{tikzpicture}[auto,swap]
\tikzstyle{vertex}=[circle,fill=black,minimum size=3pt,inner sep=0pt]
\tikzstyle{edge}=[draw,->]
\tikzstyle{cycle}=[draw,->,loop,out=130,in=50,distance=30pt]
   
\node[vertex] (0) at (0,0) {};
\node[vertex] (1) at (1,0) {};
\node[vertex] (2) at (2,0) {};

\path (0) edge[edge,bend left] node {} (1);
\path (0) edge[edge] node {} (1);
\path (2) edge[edge] node {} (1);

\end{tikzpicture}
\qquad
\begin{tikzpicture}[auto,swap]
\tikzstyle{vertex}=[circle,fill=black,minimum size=3pt,inner sep=0pt]
\tikzstyle{edge}=[draw,->]
\tikzstyle{cycle}=[draw,->,loop,out=130,in=50,distance=30pt]
   
\node[vertex] (0) at (0,0) {};
\node[vertex] (1) at (1,0) {};
\node[vertex] (2) at (2,0) {};

\path (1) edge[edge,bend right] node {} (0);
\path (1) edge[edge] node {} (0);
\path (1) edge[edge] node {} (2);

\end{tikzpicture}
$$
\item $4$ or more vertices. If there are four or more vertices, then one needs more than two edges to make it connected. Hence, there no $6$-dimensional path algebra for a
 connected graph with $4$ or more vertices.
\end{enumerate}
\bigskip

\item[\bf Problem 18]
Compute the number of all paths of length two for the following graph with 11 edges:
$$
\begin{tikzpicture}[auto,swap]
\tikzstyle{vertex}=[circle,fill=black,minimum size=3pt,inner sep=0pt]
\tikzstyle{edge}=[draw,->]
   
\node[vertex] (0) at (0,0) {};
\node[vertex] (1) at (1,0) {};
\node[vertex] (2) at (0,-1) {};
\node[vertex] (3) at (1,-1) {};
\node[vertex] (4) at (1.5,0.5) {};

\path (4) edge[edge,bend right] node {} (0);
\path (4) edge[edge,bend left] node {} (3);
\path (4) edge[edge] node {} (1);
\path (0) edge[edge] node {} (2);
\path (0) edge[edge,bend right] node {} (2);
\path (0) edge[edge] node {} (3);
\path (1) edge[edge] node {} (0);
\path (1) edge[edge] node {} (2);
\path (1) edge[edge] node {} (3);
\path (3) edge[edge] node {} (2);
\path (3) edge[edge,bend left] node {} (2);

\end{tikzpicture}
$$
Is this the maximal number of paths of length two that one can obtain 
for a graph with 11 edges and no loops?
If not, find a graph that maximizes this number.
\bigskip
\item[\bf Solution:]
Call the above graph $E$. It has the following adjacency matrix:
$$
A(E)=
\begin{bmatrix}
0 & 1 & 1 & 1 & 0\\
0 & 0 & 1 & 1 & 1\\
0 & 0 & 0 & 1 & 2\\
0 & 0 & 0 & 0 & 2\\
0 & 0 & 0 & 0 & 0
\end{bmatrix}.
$$
To find out the number of all paths of length $2$, we need to compute
$$
A(E)^2=
\begin{bmatrix}
0 & 1 & 1 & 1 & 0\\
0 & 0 & 1 & 1 & 1\\
0 & 0 & 0 & 1 & 2\\
0 & 0 & 0 & 0 & 2\\
0 & 0 & 0 & 0 & 0
\end{bmatrix}^2=
\begin{bmatrix}
0 & 0 & 1 & 2 & 5\\
0 & 0 & 0 & 1 & 4\\
0 & 0 & 0 & 0 & 2\\
0 & 0 & 0 & 0 & 0\\
0 & 0 & 0 & 0 & 0
\end{bmatrix}.
$$
Hence, there are $1+2+5+1+4+2=15$ paths of length two.
The maximum number of $2$-paths for a graph with no loops and $11$
edges is $(5+1)^15^{(2-1)}=30$. A graph maximizing this number
is 
$$
\begin{tikzpicture}[auto,swap]
\tikzstyle{vertex}=[circle,fill=black,minimum size=3pt,inner sep=0pt]
\tikzstyle{edge}=[draw,->,very thick]
   
\node[vertex] (0) at (0,0) {};
\node[vertex] (1) at (1,0) {};
\node[vertex] (2) at (2,0) {};

\path (0) edge[edge] node[above] {$(6)$} (1);
\path (1) edge[edge] node[above] {$(5)$} (2);

\end{tikzpicture}
$$
\bigskip

\item[\bf Problem 19]
Let $E$ be the following graph:
$$
\begin{tikzpicture}[auto,swap]
\tikzstyle{vertex}=[circle,fill=black,minimum size=3pt,inner sep=0pt]
\tikzstyle{edge}=[draw,->]
\tikzstyle{cycle}=[draw,->,loop,out=130,in=50,distance=30pt]
   
\node[vertex] (0) at (0,0) {};
\node[vertex] (1) at (-0.5,-0.5) {};
\node[vertex] (2) at (0.5,-0.5) {};

\path (0) edge[cycle] node {} (0);
\path (0) edge[edge] node {} (1);
\path (0) edge[edge] node {} (2);

\end{tikzpicture}
$$
Find all admissible subgraphs of $E$ (i.e.\ all subgraphs of $E$ whose inclusion in $E$ is admissible) with proof.
\bigskip
\item[\bf Solution:]
We label vertices as follows:
$$
\begin{tikzpicture}[auto,swap]
\tikzstyle{vertex}=[circle,fill=black,minimum size=3pt,inner sep=0pt]
\tikzstyle{edge}=[draw,->]
\tikzstyle{cycle}=[draw,->,loop,out=130,in=50,distance=30pt]
   
\node[vertex, label=left:$v_1$] (0) at (0,0) {};
\node[vertex, label=below:$v_2$] (1) at (-0.5,-0.5) {};
\node[vertex, label=below:$v_3$] (2) at (0.5,-0.5) {};

\path (0) edge[cycle] node {} (0);
\path (0) edge[edge] node {} (1);
\path (0) edge[edge] node {} (2);
\end{tikzpicture}
$$
Of course, the empty subgraph and the whole graph are admissible.
It remains to consider all non-empty proper subsets of $E^0$:
\begin{gather*}
\{v_i\},\;i=1,2,3,\;\{v_1,v_2\},\;\{v_1,v_3\},\;\{v_2,v_3\}.
\end{gather*}
Out of these 6 subsets only the following 3 subsets are hereditary and saturated:
$$
\{v_2\},\;\{v_3\},\;\{v_2,v_3\}.
$$
Now, every admissible subgraph yields a hereditary and saturated subset of missing veritices, and given a hereditary saturated subset $H\subseteq E^0$, 
there is only one way to obtain an admissible subgraph:
$F^0=E^0\setminus H$ and $F^1=t^{-1}_E(E^0\setminus H)$.
Hence we have only the following 3 admissible non-empty proper subgraphs of~$E$:
$$
\begin{tikzpicture}[auto,swap]
\tikzstyle{vertex}=[circle,fill=black,minimum size=3pt,inner sep=0pt]
\tikzstyle{edge}=[draw,->]
\tikzstyle{cycle}=[draw,->,loop,out=130,in=50,distance=30pt]
   
\node[vertex, label=left:$v_1$] (0) at (0,0) {};
\node[vertex, label=below:$v_2$] (1) at (-0.5,-0.5) {};

\path (0) edge[cycle] node {} (0);
\path (0) edge[edge] node {} (1);
\end{tikzpicture}
\qquad
\begin{tikzpicture}[auto,swap]
\tikzstyle{vertex}=[circle,fill=black,minimum size=3pt,inner sep=0pt]
\tikzstyle{edge}=[draw,->]
\tikzstyle{cycle}=[draw,->,loop,out=130,in=50,distance=30pt]
   
\node[vertex, label=left:$v_1$] (0) at (0,0) {};
\node[vertex, label=below:$v_3$] (2) at (0.5,-0.5) {};

\path (0) edge[cycle] node {} (0);
\path (0) edge[edge] node {} (2);
\end{tikzpicture}
\qquad
\begin{tikzpicture}[auto,swap]
\tikzstyle{vertex}=[circle,fill=black,minimum size=3pt,inner sep=0pt]
\tikzstyle{edge}=[draw,->]
\tikzstyle{cycle}=[draw,->,loop,out=130,in=50,distance=30pt]
   
\node[vertex, label=left:$v_1$] (0) at (0,0) {};

\path (0) edge[cycle] node[above] {} (0);

\end{tikzpicture}
$$
\bigskip

\item[\bf Problem 20]
Let $k$ be a field and let $E=(E^0,E^1,s,t)$ be a non-empty connected graph. Show that the path algebra $kE$ is 
commutative if and only if 
$$
|E^0|=1\text{ and } |E^1|\leq 1.
$$
\bigskip

\item[\bf Solution:]
Assume that $E^1=\emptyset$ and $|E^0|=1$. Then the graph $E$ is connected and its path algebra $kE\cong k$ is commutative.
Assume next that $|E^1|=1$ and $|E^0|=1$. Then the graph $E$ consists of one vertex and one loop-edge attached to it, 
so it is connected and its path algebra $kE\cong k[\mathbb{N}]$ is commutative.
Suppose now that $|E^0|>1$ and the graph $E$ is connected. Then there exists an edge $e$ that is not a loop. It follows that $kE$ is noncommutative because
$$
\chi_{s(e)}\chi_e=\chi_e\neq 0=\chi_e\chi_{s(e)}\,.
$$
Suppose next that $|E^1|>1$ and the graph $E$ is connected. If there is an edge that is not a loop, then we already know that $kE$ is noncommutative.
If all edges are loops and $E$ is connected, then there is only one vertex Hence there are at least two different loop-edges $\alpha$ and $\beta$ starting from the
same vertex. Consequently, $kE$ is noncommutative because $\chi_\alpha\chi_\beta=\chi_{\alpha\beta}\neq\chi_{\beta\alpha}=\chi_\beta\chi_\alpha$.

\bigskip

\item[\bf Problem 21]
Let $k$ be a field and $E$ be the following graph:
$$
\begin{tikzpicture}[auto,swap]
\tikzstyle{vertex}=[circle,fill=black,minimum size=3pt,inner sep=0pt]
\tikzstyle{edge}=[draw,->]
   
\node[vertex,label=below:$v_1$] (0) at (0,0) {};
\node[vertex,label=below:$v_2$] (1) at (1,0) {};
\node[vertex,label=below:$v_3$] (2) at (2,0) {};

\path (0) edge[edge] node[above] {$e$} (1);
\path (2) edge[edge] node[above] {$f$} (1);
\end{tikzpicture}
$$
Show that any element in the Leavitt path algebra $L_k(E)$ is a linear combination~of 
$$
[\chi_{v_1}]\,,\quad [\chi_{v_2}]\,,\quad [\chi_{v_3}]\,,\quad  [\chi_{e}],\quad  [\chi_{f}],\quad  [\chi_{e^*}],\quad  [\chi_{f^*}],\quad  [\chi_{ef^*}],
\quad[\chi_{fe^*}]\quad\in\quad L_k(E).
$$
\bigskip

\item[\bf Solution:]
Since all elements of $L_k(E)$ corresponding to paths  of length at most one in the extended graph $\bar{E}$ are already 
listed, it suffices to check that, if $p\in FP_n(\bar{E})$ and $n>1$, then $[\chi_p]$ is a linear combination of the above elements.
If $p$ contains a subpath $x^*y$, where $x$ and $y$ are edges, then $[\chi_p]=0$, if $x\neq y$, and $[\chi_p]=[\chi_q]$, if $x=y$ and $q$ is a path obtained from $p$ 
by removing $x^*x$. Consequently, the only elements of $L_k(E)$ corresponding to paths of length at least two are of the form $[\chi_{e_1\cdots e_nf_m^*\cdots f_1^*}]$,
where $e_1\cdots e_n$ and $f_1\cdots f_m$ are paths in~$E$. As the only such paths are $e$ and $f$, the only elements of $L_k(E)$ corresponding 
to paths of length at least two are
$$
[\chi_{ef^*}],\quad[\chi_{fe^*}],\quad [\chi_{ee^*}]=[\chi_{v_1}],\quad[\chi_{ff^*}]=[\chi_{v_3}].
$$
\bigskip

\item[\bf Problem 22]
Let $k$ be a field. Up to isomorphism, find all $5$-dimensional path algebras over $k$.
\bigskip
\item[\bf Solution:]
Since for a graph $E$ the basis of the path algebra $kE$ consists of all finite paths,
we need to find all graphs such that the number of all their paths, including the $0$-paths (vertices), 
equals~$5$.
Consider a graph with:
\begin{enumerate}
\item $0$ edges. The only possibility is to have $5$ vertices.
$$
\begin{tikzpicture}[auto,swap]
\tikzstyle{vertex}=[circle,fill=black,minimum size=3pt,inner sep=0pt]
\tikzstyle{edge}=[draw,->]
\tikzstyle{cycle}=[draw,->,loop,out=130,in=50,distance=30pt]
   
\node[vertex] (0) at (0,0) {};
\node[vertex] (1) at (0.5,0) {};
\node[vertex] (2) at (1,0) {};
\node[vertex] (3) at (1.5,0) {};
\node[vertex] (4) at (2,0) {};

\end{tikzpicture}
$$
\item $1$ edge. We need to have at least two vertices, because otherwise the edge would would be a loop.
Then, the only possibility is to have two more disconnected vertices.
$$
\begin{tikzpicture}[auto,swap]
\tikzstyle{vertex}=[circle,fill=black,minimum size=3pt,inner sep=0pt]
\tikzstyle{edge}=[draw,->]
\tikzstyle{cycle}=[draw,->,loop,out=130,in=50,distance=30pt]
   
\node[vertex] (0) at (0,0) {};
\node[vertex] (1) at (0.5,0) {};
\node[vertex] (2) at (1,0) {};
\node[vertex] (3) at (1.5,0) {};

\path (0) edge[edge] node {} (1);

\end{tikzpicture}
$$
\item $2$ edges. Again, we need to have at least two vertices so that both edges are not loops.
If both edges start at the same vertex, they both can end at some other vertex or end at two different vertices.
If they both end at the same vertex, we need to add a disconnected vertex. If they start at two different vertices,
they need to end at the same vertex.
$$
\begin{tikzpicture}[auto,swap]
\tikzstyle{vertex}=[circle,fill=black,minimum size=3pt,inner sep=0pt]
\tikzstyle{edge}=[draw,->]
\tikzstyle{cycle}=[draw,->,loop,out=130,in=50,distance=30pt]
   
\node[vertex] (0) at (0,0) {};
\node[vertex] (1) at (0.5,0) {};
\node[vertex] (2) at (1,0) {};

\path (0) edge[edge,bend left] node {} (1);
\path (0) edge[edge] node {} (1);

\end{tikzpicture}
\qquad
\begin{tikzpicture}[auto,swap]
\tikzstyle{vertex}=[circle,fill=black,minimum size=3pt,inner sep=0pt]
\tikzstyle{edge}=[draw,->]
\tikzstyle{cycle}=[draw,->,loop,out=130,in=50,distance=30pt]
   
\node[vertex] (0) at (0,0) {};
\node[vertex] (1) at (0.5,0) {};
\node[vertex] (2) at (1,0) {};

\path (1) edge[edge] node {} (0);
\path (1) edge[edge] node {} (2);

\end{tikzpicture}
\qquad
\begin{tikzpicture}[auto,swap]
\tikzstyle{vertex}=[circle,fill=black,minimum size=3pt,inner sep=0pt]
\tikzstyle{edge}=[draw,->]
\tikzstyle{cycle}=[draw,->,loop,out=130,in=50,distance=30pt]
   
\node[vertex] (0) at (0,0) {};
\node[vertex] (1) at (0.5,0) {};
\node[vertex] (2) at (1,0) {};

\path (0) edge[edge] node {} (1);
\path (2) edge[edge] node {} (1);

\end{tikzpicture}
$$
\item $3$ edges. The only possibility is the following:
$$
\begin{tikzpicture}[auto,swap]
\tikzstyle{vertex}=[circle,fill=black,minimum size=3pt,inner sep=0pt]
\tikzstyle{edge}=[draw,->]
\tikzstyle{cycle}=[draw,->,loop,out=130,in=50,distance=30pt]
   
\node[vertex] (0) at (0,0) {};
\node[vertex] (1) at (0.5,0) {};

\path (0) edge[edge,bend left] node {} (1);
\path (0) edge[edge] node {} (1);
\path (0) edge[edge,bend right] node {} (1);
\end{tikzpicture}
$$
\item 
$4$ or more edges. This is impossible because then there would be only one vertex, whence edges would be loops.
\end{enumerate}
\bigskip

\item[\bf Problem 23]
Compute the number of all paths of a fixed length $k>1$ for the following graph:
$$
\begin{tikzpicture}[auto,swap]
\tikzstyle{vertex}=[circle,fill=black,minimum size=3pt,inner sep=0pt]
\tikzstyle{edge}=[draw,->]
\tikzstyle{cycle}=[draw,->,loop,out=130,in=50,distance=30pt]
   
\node[vertex] (0) at (-0.5,0) {};
\node[vertex] (00) at (0.5,0) {};
\node[vertex] (1) at (0,-0.5) {};

\path (0) edge[cycle] node {} (0);
\path (00) edge[cycle] node {} (00);
\path (0) edge[edge] node {} (00);
\path (0) edge[edge] node {} (1);
\path (00) edge[edge] node {} (1);

\end{tikzpicture}
$$
\item[\bf Solution:]
Call the above graph~$E$. It has the following adjacency matrix:
$$
A(E)=
\begin{bmatrix}
1 & 1 & 1\\
0 & 1 & 1\\
0 & 0 & 0
\end{bmatrix}.
$$
To obtain the number of $k$-paths we need to raise $A(E)$ to the $k$-th power.
We claim that
$$
A(E)^k=
\begin{bmatrix}
1 & k & k\\
0 & 1 & 1\\
0 & 0 & 0
\end{bmatrix},
$$
and prove it by induction. For $k=1$ the equality is satisfied and
$$
\begin{bmatrix}
1 & k & k\\
0 & 1 & 1\\
0 & 0 & 0
\end{bmatrix}
\begin{bmatrix}
1 & 1 & 1\\
0 & 1 & 1\\
0 & 0 & 0
\end{bmatrix}=
\begin{bmatrix}
1 & k+1 & k+1\\
0 & 1 & 1\\
0 & 0 & 0
\end{bmatrix}
$$
proves the inductive step. Hence, there are $2k+3$ many $k$-paths.
\bigskip

\item[\bf Problem 24]
Let $E$ be the following graph:
$$
\begin{tikzpicture}[auto,swap]
\tikzstyle{vertex}=[circle,fill=black,minimum size=3pt,inner sep=0pt]
\tikzstyle{edge}=[draw,->]
\tikzstyle{cycle}=[draw,->,loop,out=130,in=50,distance=30pt]
   
\node[vertex] (0) at (-0.5,0) {};
\node[vertex] (00) at (0.5,0) {};
\node[vertex] (1) at (-0.5,-0.5) {};
\node[vertex] (2) at (0.5,-0.5) {};

\path (0) edge[cycle] node {} (0);
\path (00) edge[cycle] node {} (00);
\path (0) edge[edge] node {} (00);
\path (0) edge[edge] node {} (1);
\path (0) edge[edge] node {} (2);
\path (00) edge[edge] node {} (1);
\path (00) edge[edge] node {} (2);

\end{tikzpicture}
$$
Find all admissible subgraphs of $E$ (i.e.\ all subgraphs of $E$ whose inclusion in $E$ is admissible) with proof. 
\bigskip

\item[\bf Solution:]
We label vertices as follows:
$$
\begin{tikzpicture}[auto,swap]
\tikzstyle{vertex}=[circle,fill=black,minimum size=3pt,inner sep=0pt]
\tikzstyle{edge}=[draw,->]
\tikzstyle{cycle}=[draw,->,loop,out=130,in=50,distance=30pt]
   
\node[vertex, label=left:$v_1$] (0) at (-0.5,0) {};
\node[vertex, label=right:$v_2$] (00) at (0.5,0) {};
\node[vertex, label=below:$v_3$] (1) at (-0.5,-0.5) {};
\node[vertex, label=below:$v_4$] (2) at (0.5,-0.5) {};

\path (0) edge[cycle] node {} (0);
\path (00) edge[cycle] node {} (00);
\path (0) edge[edge] node {} (00);
\path (0) edge[edge] node {} (1);
\path (0) edge[edge] node {} (2);
\path (00) edge[edge] node {} (1);
\path (00) edge[edge] node {} (2);

\end{tikzpicture}
$$
Of course, the empty subgraph and the whole graph are admissible.
It remains to consider all non-empty proper subsets of $E^0$:
\begin{gather*}
\{v_i\},\;i=1,2,3,4,\;\{v_1,v_2\},\;\{v_1,v_3\},\;\{v_1,v_4\},\;\;\{v_2,v_3\},\;\{v_2,v_4\},\;\{v_3,v_4\},\;\\
\{v_1,v_2,v_3\},\;\{v_1,v_2,v_4\},\;\{v_1,v_3,v_4\},\;\{v_2,v_3,v_4\}.
\end{gather*}
Out of these 14 subsets only the following 4 subsets are hereditary and saturated:
$$
\{v_3\},\;\{v_4\},\;\{v_3,v_4\},\;\{v_2,v_3,v_4\}.
$$
Now, every admissible subgraph yields a hereditary and saturated subset of missing veritices, and given a hereditary saturated subset $H\subseteq E^0$, 
there is only one way to obtain an admissible subgraph:
$F^0=E^0\setminus H$ and $F^1=t^{-1}_E(E^0\setminus H)$.
Hence we have only the following 4 admissible non-empty proper subgraphs of~$E$:
$$
\begin{tikzpicture}[auto,swap]
\tikzstyle{vertex}=[circle,fill=black,minimum size=3pt,inner sep=0pt]
\tikzstyle{edge}=[draw,->]
\tikzstyle{cycle}=[draw,->,loop,out=130,in=50,distance=30pt]
   
\node[vertex, label=left:$v_1$] (0) at (-1,0) {};
\node[vertex, label=right:$v_2$] (00) at (1,0) {};
\node[vertex, label=below:$v_4$] (1) at (0,-1) {};

\path (0) edge[cycle] node[above] {} (0);
\path (00) edge[cycle] node[above] {} (00);
\path (0) edge[edge] node[above] {} (00);
\path (0) edge[edge] node[left] {} (1);
\path (00) edge[edge] node[right] {} (1);
\end{tikzpicture}
\quad
\begin{tikzpicture}[auto,swap]
\tikzstyle{vertex}=[circle,fill=black,minimum size=3pt,inner sep=0pt]
\tikzstyle{edge}=[draw,->]
\tikzstyle{cycle}=[draw,->,loop,out=130,in=50,distance=30pt]
   
\node[vertex, label=left:$v_1$] (0) at (-1,0) {};
\node[vertex, label=right:$v_2$] (00) at (1,0) {};
\node[vertex, label=below:$v_3$] (1) at (0,-1) {};

\path (0) edge[cycle] node[above] {} (0);
\path (00) edge[cycle] node[above] {} (00);
\path (0) edge[edge] node[above] {} (00);
\path (0) edge[edge] node[left] {} (1);
\path (00) edge[edge] node[right] {} (1);
\end{tikzpicture}
\quad
\begin{tikzpicture}[auto,swap]
\tikzstyle{vertex}=[circle,fill=black,minimum size=3pt,inner sep=0pt]
\tikzstyle{edge}=[draw,->]
\tikzstyle{cycle}=[draw,->,loop,out=130,in=50,distance=30pt]
   
\node[vertex, label=left:$v_1$] (0) at (0,0) {};
\node[vertex, label=right:$v_2$] (1) at (1,0) {};

\path (0) edge[cycle] node[above] {} (0);
\path (0) edge[edge] node[below] {} (1);
\path (1) edge[cycle] node[above] {} (1);
\end{tikzpicture}
\quad
\begin{tikzpicture}[auto,swap]
\tikzstyle{vertex}=[circle,fill=black,minimum size=3pt,inner sep=0pt]
\tikzstyle{edge}=[draw,->]
\tikzstyle{cycle}=[draw,->,loop,out=130,in=50,distance=30pt]
   
\node[vertex, label=left:$v_1$] (0) at (0,0) {};
\path (0) edge[cycle] node[above] {} (0);
\end{tikzpicture}
$$
\bigskip

\item[\bf Problem 25]
Let $k$ be a field and let $E$ be the following graph:
$$
\begin{tikzpicture}[auto,swap]
\tikzstyle{vertex}=[circle,fill=black,minimum size=3pt,inner sep=0pt]
\tikzstyle{edge}=[draw,->]
   
\node[vertex] (0) at (0,0) {};
\node[vertex] (1) at (1,0) {};

\path (0) edge[edge,bend right] node {} (1);
\path (0) edge[edge,bend left] node {} (1);

\end{tikzpicture}
$$
Compute all idempotents ($x^2=x$) in the path algebra $kE$.
\bigskip

\item[\bf Solution:]
Let $v$ be the left vertex of $E$, let $w$ be the right  vertex of $E$, and let $e$ and $f$ be the two edges in $E$.
Every element of $kE$ is of the form
$$
x=\lambda_1\chi_v+\lambda_2\chi_w+\alpha_1\chi_e+\alpha_2\chi_f,\qquad \lambda_1,\lambda_2,\alpha_1,\alpha_2\in k.
$$
Therefore,
\begin{align*}
x^2&=(\lambda_1 \chi_v + \lambda_2 \chi_w + \alpha_1 \chi_e+ \alpha_2 \chi_f)(\lambda_1 \chi_v + \lambda_2 \chi_w + \alpha_1 \chi_e+ \alpha_2 \chi_f)  \\
&=\lambda^2_1 \chi_v+\lambda_1 \alpha_1 \chi_e+\lambda_1 \alpha_2 \chi_f+\lambda^2_2 \chi_w+\lambda_2\alpha_1 \chi_e+\lambda_2\alpha_2 \chi_f\,.
\end{align*}
Hence, remembering that for any finite path $p$ the element $\chi_p$ is a basis element, the idempotent equation $x^2=x$ yields
$$
\lambda^2_1=\lambda_1,\quad \lambda_2^2=\lambda_2\quad \iff \quad
\boxed{\lambda_1=0\text{ or }1, \quad \lambda_2=0\text{ or } 1}\,;
$$
$$
(\lambda_1+\lambda_2)\alpha_1=\alpha_1,\quad (\lambda_1+\lambda_2)\alpha_2=\alpha_2
$$
$$
\iff \quad \boxed{\lambda_1+\lambda_2=1\quad\text{or}\quad \alpha_1=0}\quad\text{ and }
\quad \boxed{\lambda_1+\lambda_2=1\quad\text{or}\quad \alpha_2=0}.
$$
We consider all possibilities:
\begin{enumerate}
\item 
$\lambda_1=\lambda_2=0$. Then $\alpha_1=\alpha_2=0$, and consequently $x=0$.
\item 
$\lambda_1=\lambda_2=1$. Then $\alpha_1=\alpha_2=0$, and consequently $x=\chi_v+\chi_w=1$.
\item 
$\lambda_1=1$ and $\lambda_2=0$. Then $\alpha_1$ and $\alpha_2$ are arbitrary, and
$x=\chi_v+\alpha_1\chi_e+\alpha_2\chi_f$.
\item 
$\lambda_1=0$ and $\lambda_2=1$. Then $\alpha_1$ and $\alpha_2$ are arbitrary, and
$x=\chi_w+\alpha_1\chi_e+\alpha_2\chi_f$.
\end{enumerate}
\bigskip

\item[\bf Problem 26]
Using the pullback theorem (Theorem~2.9 in the lecture notes), prove
that, for any two row-finite graphs $E$ and $F$ such that $E\cap F=\emptyset$, we have an isomorphism of algebras
$$
L_k(E\cup F)\cong L_k(E)\oplus L_k(F).
$$
If in addition both $E$ and $F$ are non-empty, show also that $E\cup F$ is \emph{not} a connected graph.
\bigskip

\item[\bf Solution:]
Since the Leavitt path algebra $L_k(\emptyset)$ of the empty graph is zero, the canonical quotient maps
$L_k(E)\stackrel{\pi_1}{\to} L_k(E\cap F)\stackrel{\pi_1}{\leftarrow} L_k(F)$ are zero. 
Furthermore, as both graphs are row finite, and the empty graph is always an admissible subgraph,
Theorem~2.9 applies, so $L_k(E\cup F)\cong P(\pi_1,\pi_2)=L_k(E)\oplus L_k(F)$.
Finally, if $E^0\neq\emptyset\neq F^0$ and $E^0\cap F^0=\emptyset$, there exist $v\in E^0$ and
$w\in F^0$ such that $v\neq w$.  Suppose that $E\cup F$ is connected. 
Then there exists an unoriented path between $v$ and $w$.  It must
contain an edge joining a vertex in $E^0$ with a vertex in $F^0$, 
but such an edge does not exist because, as $E^0\cap F^0=\emptyset$, it neither can belong to $E^1$
nor to $F^1$, and $(E\cup F)^1= E^1\cup F^1$.
\end{enumerate}

\end{document}